\documentclass[10pt,reqno]{amsart}
\usepackage{amsmath,amssymb}
\usepackage{color}
\usepackage{soul}
\usepackage[colorlinks=true,linkcolor=blue,citecolor=magenta]{hyperref}
\usepackage[margin=1in]{geometry}
\usepackage{hyperref}

\newcommand{\ainf}{a_{\infty}}
\newcommand{\binf}{b_{\infty}}
\newcommand{\sign}{\mathrm{sign}}

\makeatletter
\newcommand{\pushright}[1]{\ifmeasuring@#1\else\omit\hfill$\displaystyle#1$\fi\ignorespaces}
\newcommand{\pushleft}[1]{\ifmeasuring@#1\else\omit$\displaystyle#1$\hfill\fi\ignorespaces}
\makeatother
\title[Reaction-diffusion systems with nonlinear diffusion]{Global regularity and convergence to equilibrium of reaction-diffusion systems with nonlinear diffusion}
\author{Klemens Fellner, Evangelos Latos, Bao Quoc Tang}
\address{Klemens Fellner \hfill\break
	Institute of Mathematics and Scientific Computing, University of Graz, Heinrichstrasse 36, 8010 Graz, Austria}
\email{klemens.fellner@uni-graz.at}
\address{Evangelos Latos \hfill\break
	University of Mannheim, D-68131 Mannheim, Germany}
\email{evangelos.latos@math.uni-mannheim.de}
\address{Bao Quoc Tang \hfill\break
	Institute of Mathematics and Scientific Computing, University of Graz, Heinrichstrasse 36, 8010 Graz, Austria}
\email{quoc.tang@uni-graz.at} 
\begin{document}
\newtheorem{lemma}{Lemma}[section]
\newtheorem{remark}{Remark}[section]
\newtheorem{theorem}{Theorem}[section]	
\newcommand{\iTO}{\int_0^T\!\!\int_{\Omega}}	
\subjclass[2010]{35B35, 35B40, 35K57, 35Q92}
\keywords{Reaction-Diffusion Systems; Nonlinear diffusion; Porous medium; Convergence to Equilibrium; Entropy Method}
\begin{abstract}
		We study the boundedness and convergence to equilibrium of weak solutions to reaction-diffusion systems with nonlinear diffusion. The nonlinear diffusion is of porous medium type and the nonlinear reaction terms are assumed to grow  polynomially and to dissipate (or conserve) the total mass. By utilising duality estimates, the dissipation of the total mass and the smoothing effect of the porous medium equation, we prove that if the exponents of the nonlinear diffusion terms are high enough, then weak solutions are bounded, locally H\"older continuous and their $L^{\infty}(\Omega)$-norm grows in time at most polynomially. 
		
In order to show convergence to equilibrium, we consider a specific class of nonlinear reaction-diffusion models, which describe a single reversible reaction with arbitrarily many chemical substances. By exploiting a generalised Logarithmic Sobolev Inequality, an indirect diffusion effect and the polynomial in time growth of the $L^{\infty}(\Omega)$-norm, we show an entropy entropy-production inequality which implies exponential convergence to equilibrium in $L^p(\Omega)$-norm, for any $1\leq p < \infty$, with explicit rates and constants.
\end{abstract}
\maketitle
\tableofcontents
\section{Introduction and Main results}
In this article, we study the boundedness and convergence to equilibrium of weak solutions to reaction-diffusion systems with nonlinear diffusion
	\begin{equation}\label{S}\tag{S}
	\begin{cases}
	\partial_t u_i - d_i\Delta(u_i^{m_i}) = f_i(u),  &\quad x\in\Omega, \quad\ \; t>0, \qquad i=1,\ldots, S,\\
	d_i\nabla (u_i^{m_i})\cdot \overrightarrow{n} = 0, &\quad x\in\partial\Omega, \quad t>0, \qquad i=1,\ldots, S,\\
	u_i(x,0) = u_{i,0}(x), &\quad x\in\Omega, \qquad\qquad\qquad\; i=1,\ldots, S,
	\end{cases}
	\end{equation}
with the unknown functions $u = (u_1, \ldots, u_S)$ and $u_i: \Omega \times \mathbb R_+ \mapsto \mathbb R$,  the positive diffusion coefficients $d_i >0$, the porous medium exponents $m_i >1$ and where $\Omega \subset \mathbb R^d$ denotes a bounded domain with sufficiently smooth boundary $\partial\Omega$ (e.g. $\partial\Omega$ is of class $C^{2+\epsilon}$ for some $\epsilon>0$) with outward unit normal $\overrightarrow{n}$ on $\partial\Omega$. 
Moreover, the conditions imposed on the nonlinear reaction terms $f_i(u)$ and the nonnegative initial data $u_{i,0}$ will be specified later. 
\medskip	
	
The first part of this paper considers weak solutions to system \eqref{S}. Our aim is to provide sufficient conditions on the porous medium exponents $m_i$ and on the nonlinearities $f_i(u)$,
under which weak solutions are indeed bounded in $L^{\infty}$ (and thus locally H\"older-continuous) for all times and grow at most polynomially in time. 
More precisely, we assume the following conditions on the nonlinearities:
	\begin{itemize}
		\item[(i)] The nonlinearities $f_i: \mathbb R^S \to \mathbb R$ are locally Lipschitz functions and satisfy
		\begin{equation}\label{G}\tag{G}
		|f_i(u)|\leq C (1+|u|^\nu),\quad\forall u=(u_1,\ldots,u_S)\in\mathbb{R}^S, \quad \forall i=1,\ldots, S,	
		\end{equation}
		where $\mathbb R\ni \nu\geq 1$ is the maximal growth exponent of the reaction terms.
		\item[(ii)] There exist positive constants $\lambda_1,\ldots,\lambda_S>0$ such that:
		\begin{equation}\label{M}\tag{M}
		\sum^S_{i=1}\lambda_if_i(u)\leq0,\qquad\forall u\in\mathbb{R}^S,	
		\end{equation}
		which formally implies the following mass dissipation law
		$$
		\frac{d}{d t}\int_\Omega\sum^S_{i=1}\lambda_iu_idx\leq0.
		$$
		\item[(iii)] The nonlinearities are assumed quasi-positive, that is for all $i=1,\ldots,S,$ holds
		\begin{equation}\label{P}\tag{P}
		f(u_1,\ldots,u_{i-1},0,u_{i+1},\ldots,u_S)\geq0,\qquad\forall u_1,\ldots,u_S\geq0.
		\end{equation}
		The quasi-positivity condition \eqref{P} ensures global nonnegativity of solutions subject to nonnegative initial data, see e.g. \cite{Pie10,LP17}.
	\end{itemize}	

The existence of global weak solutions to \eqref{S} subject to homogeneous Dirichlet boundary conditions and under the assumptions \eqref{G}-\eqref{M}-\eqref{P} was recently obtained in \cite{LP17}. 
{The proof of the following Theorem \ref{thm:1.1} on the existence of weak solutions to \eqref{S} subject to Neumann boundary conditions uses similar arguments to \cite{LP17} and is postponed to Section \ref{app}.}
\begin{theorem}\label{thm:1.1}
	Assume the conditions \eqref{G}, \eqref{M} and \eqref{P} and consider nonnegative initial data $(u_{i,0}) \in L^2(\Omega)^S$. If 
	\begin{equation*}
		m_i > \max\{\nu - 1; 1\} \quad \text{ for all } \quad i=1\ldots S,
	\end{equation*}
	then, there exists a global weak nonnegative solution to system \eqref{S} in the sense that, for all $i=1,\ldots, S$, $u_i \in C([0,+\infty); L^1(\Omega))$, $u_i^{m_i}\in L^1(0,T;W^{1,1}(\Omega))$, $f_i(u)\in L^1(\Omega \times [0,T])$ and 
		\begin{equation*}
			-\int_{\Omega}\psi(0)u_{i,0}dx - \int_{0}^{T}\int_{\Omega}(u_i\partial_t\psi + d_iu_i^{m_i}\Delta\psi)dxdt = \int_0^T\int_{\Omega}\psi f_i(u)dxdt
		\end{equation*}
		for all test function $\psi \in C^{2,1}(\overline{\Omega}\times [0,T])$ with $\nabla \psi\cdot \overrightarrow{n} = 0$ on $\partial\Omega\times (0,T)$ and $\psi(\cdot, T) = 0$.
		
Moreover, a solution $u = (u_1,\ldots, u_S)$ to \eqref{S} with \eqref{M} and \eqref{P} satisfy
\begin{equation*}
	\|u_i\|_{L^{m_i+1}(Q_T)} \leq C \quad \text{ for all } \quad T>0 \quad \text{ and } \quad i = 1,\ldots, S,
\end{equation*}
where the constant $C$ depends on the $L^2$-norm of the initial data, the constants $\lambda_i$ in \eqref{M}, the diffusion coefficients $d_i>0$ and the domain $\Omega$.
	\end{theorem}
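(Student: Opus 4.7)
The plan is to follow the construction of \cite{LP17}, adapted to homogeneous Neumann boundary conditions, arguing in three main steps: construction of approximate solutions, a uniform $L^{m_i+1}(Q_T)$ a priori estimate obtained by a Pierre-type duality argument, and a compactness-based passage to the limit.

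\textbf{Step 1: Approximation and basic bounds.} I would replace each reaction term $f_i$ by a bounded Lipschitz truncation, e.g.\ $f_i^{(n)}(u):=f_i(u)/(1+n^{-1}|f_i(u)|)$, which preserves both the quasi-positivity \eqref{P} and the mass-dissipation \eqref{M}; if needed, add a vanishing linear regularization $-\varepsilon_n\Delta u_i$ to the porous-medium operator to make the diffusion uniformly parabolic. Standard quasilinear parabolic theory with Neumann data then produces smooth nonnegative approximate solutions $u^{(n)}=(u_1^{(n)},\ldots,u_S^{(n)})$ globally in time, the nonnegativity coming from \eqref{P} via a tangent argument. Multiplying the $i$-th equation by $\lambda_i$, summing and integrating over $\Omega$ using \eqref{M} yields the uniform mass bound $\|\sum_i \lambda_i u_i^{(n)}(t)\|_{L^1(\Omega)}\leq \|\sum_i \lambda_i u_{i,0}\|_{L^1(\Omega)}$.

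\textbf{Step 2: Uniform $L^{m_i+1}(Q_T)$ estimate via duality.} Setting $W^{(n)}:=\sum_i \lambda_i u_i^{(n)}$ and $Z^{(n)}:=\sum_i \lambda_i d_i (u_i^{(n)})^{m_i}$, the weighted sum of the approximate equations gives
\[
\partial_t W^{(n)} - \Delta Z^{(n)} \leq 0 \quad \text{in }Q_T, \qquad \nabla Z^{(n)}\cdot\overrightarrow{n}=0 \quad \text{on }\partial\Omega\times(0,T).
\]
Following the duality argument used in \cite{LP17} for the Dirichlet case, I would test this differential inequality against the solution $\phi\geq 0$ of the backward heat equation $-\partial_t\phi-\Delta\phi=\theta$, $\phi(T)=0$, $\nabla\phi\cdot\overrightarrow{n}=0$, with smooth nonnegative $\theta$; using standard $L^2$ parabolic regularity $\|\Delta\phi\|_{L^2(Q_T)}+\|\phi(0)\|_{L^2(\Omega)}\leq C\|\theta\|_{L^2(Q_T)}$ together with the $L^2$-bound on the initial data, one obtains $\|Z^{(n)}\|_{L^2(Q_T)}\leq C$ uniformly in $n$ (possibly after a bootstrap on the auxiliary quantity $\|W^{(n)}\|_{L^2(Q_T)}$). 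Since $Z^{(n)}\geq \lambda_i d_i (u_i^{(n)})^{m_i}$ pointwise, this produces $(u_i^{(n)})^{m_i}\in L^2(Q_T)$ uniformly, hence $u_i^{(n)}\in L^{2m_i}(Q_T)$; as $2m_i\geq m_i+1$ on the bounded interval $(0,T)$, the desired bound $\|u_i^{(n)}\|_{L^{m_i+1}(Q_T)}\leq C$ follows.

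\textbf{Step 3: Compactness and passage to the limit.} Combining the uniform bound on $Z^{(n)}$ with the energy identity obtained by multiplying each equation by $(u_i^{(n)})^{m_i}$ provides uniform control of $(u_i^{(n)})^{m_i}$ in $L^2(0,T;H^1(\Omega))$ and of $\partial_t u_i^{(n)}$ in a suitable negative Sobolev space; an Aubin-Lions argument then extracts a subsequence with $u_i^{(n)}\to u_i$ a.e.\ in $Q_T$. Since $m_i>\nu-1$, the growth condition \eqref{G} together with the uniform $L^{m_i+1}(Q_T)$ estimate makes $\{f_i^{(n)}(u^{(n)})\}$ equi-integrable on $Q_T$, and Vitali's theorem upgrades convergence to $f_i^{(n)}(u^{(n)})\to f_i(u)$ in $L^1(Q_T)$. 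Passing to the limit in the approximate weak formulation is then standard. The principal technical obstacle is the duality estimate in Step 2: when $m_i\equiv 1$ one writes $Z^{(n)}=A^{(n)}W^{(n)}$ with the ratio $A^{(n)}$ uniformly bounded between $\min_i d_i\lambda_i$ and $\max_i d_i\lambda_i$, and Pierre's duality closes at once; for porous-medium diffusion this effective ratio is unbounded as $u_i^{(n)}$ grows, so one must work directly at the level of $Z^{(n)}$ and close the estimate through a careful bootstrap in $\|W^{(n)}\|_{L^2(Q_T)}$ or by combining duality with the porous-medium energy inequality.
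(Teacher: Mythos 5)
Your proposal reproduces the paper's overall skeleton (truncated nonlinearities preserving \eqref{M} and \eqref{P}, a Pierre-type duality estimate, then compactness plus Vitali), but the decisive step, your Step 2, is not actually carried out and the mechanism you propose for it does not close. If you test $\partial_t W^{(n)}-\Delta Z^{(n)}\le 0$ against the solution $\phi$ of the plain backward heat equation $-\partial_t\phi-\Delta\phi=\theta$, integration by parts leaves an uncontrolled term of the form $\int_0^T\!\!\int_\Omega (Z^{(n)}-W^{(n)})\Delta\phi\,dx\,dt$, which can only be absorbed if one already has $Z^{(n)}\in L^2(Q_T)$, or if one instead solves the dual problem $-\partial_t\phi-A\Delta\phi=\theta$ with $A=Z^{(n)}/W^{(n)}$ and uses maximal-regularity bounds that require $A$ to be bounded above and below --- precisely the point you yourself concede fails for porous-medium diffusion. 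The ``careful bootstrap in $\|W^{(n)}\|_{L^2(Q_T)}$'' that is supposed to rescue this is never specified, and the output you claim, namely $(u_i^{(n)})^{m_i}\in L^2(Q_T)$ uniformly and hence $u_i^{(n)}\in L^{2m_i}(Q_T)$, is strictly stronger than the $L^{m_i+1}(Q_T)$ bound in the statement and is not what duality yields here. The paper's Lemma \ref{duality} (following \cite{LP17}) avoids any parabolic dual problem: integrate $\partial_t Z-\Delta W\le 0$ in time to get $Z(t)-Z(0)-\Delta\int_0^t W\,ds\le 0$, multiply by $W(t)\ge 0$ and integrate over $Q_T$; the cross term satisfies $\int_0^T\!\!\int_\Omega ZW\,dx\,dt\ge\sum_i d_i\lambda_i^2\|u_i\|_{L^{m_i+1}(Q_T)}^{m_i+1}$, the diffusion term contributes the nonnegative quantity $\tfrac12\int_\Omega\bigl|\nabla\int_0^T W\,ds\bigr|^2dx$, and the remaining term $\int_0^T\!\!\int_\Omega Z(0)W\,dx\,dt$ is bounded by $C\|Z(0)\|^2$ through the elliptic dual problem $-\Delta\theta_0=Z(0)$. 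Without this (or an equivalent) computation, your Step 2 is a gap, not a proof.

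Step 3 also rests on an unjustified uniform bound: multiplying the approximate equation by $(u_i^{(n)})^{m_i}$ produces the reaction term $\int_0^T\!\!\int_\Omega f_i^{(n)}(u^{(n)})(u_i^{(n)})^{m_i}dx\,dt$, of order $|u^{(n)}|^{\nu+m_i}$, which is not controlled by the $L^{m_i+1}(Q_T)$ duality bound (note $\nu+m_i>m_i+1$ for $\nu>1$); the truncation only gives an $\varepsilon$-dependent bound, so the claimed uniform $L^2(0,T;H^1(\Omega))$ estimate for $(u_i^{(n)})^{m_i}$ and the Aubin--Lions argument do not go through uniformly. The paper instead obtains almost-everywhere convergence from Baras' compactness result (Lemma \ref{compact}), which only needs the nonlinearities bounded in $L^1(Q_T)$ --- in fact the duality estimate together with \eqref{G} and $m_i>\nu-1$ gives a uniform $L^{1+\delta}(Q_T)$ bound, which then also feeds the Vitali argument. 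Your Vitali step is fine once a.e.\ convergence is secured, but the route to that convergence needs to be repaired, for instance by invoking a compactness result of this type rather than an energy estimate.
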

\begin{remark}
With a more careful analysis, it seems possible to generalise Theorem \ref{thm:1.1} and consider initial data $u_{i,0}\in L^1(\Omega)$. We refer the interested reader to \cite{PR16} for the case of systems with quadratic nonlinearities and $L^1$ initial data. 
\end{remark}

\medskip	
Given the weak solutions of Theorem \ref{thm:1.1}, our aim is to establish their boundedness and a polynomially in time growing $L^{\infty}$-estimate under stronger assumptions on the porous medium exponents $m_i$: First, we recall the 
a-priori estimate $u_i \in L^{m_i+1}(Q_T)$ 
of Theorem \ref{thm:1.1} 
and the growth condition \eqref{G} imply $f_i(u)\in L^{1+\epsilon}(Q_T)$ for some $\epsilon>0$, which also justifies the definition of weak solutions in Theorem \ref{thm:1.1}. In fact, the $L^{1+\epsilon}$ integrability guarantees uniform integrability of nonlinearities in a suitable approximating scheme (see the proof of Theorem \ref{thm:1.1} in the Section \ref{app}). 

Intuitively, Theorem \ref{thm:1.1} states that larger exponents $m_i$ yield higher integrability of the nonlinearities $f_i(u)$. Moreover, the functions $u_i$ solve a porous medium equation with the right hand side having higher integrability. Thus, by quantifying the smoothing effect from the porous medium equation, 
this allows to start a bootstrap argument, which eventually leads to boundedness of $u_i$ in $L^{\infty}$. In particular, it is of importance that our argument allows to show that the growth in time of the $L^{\infty}$-norms is at most polynomial. The first main result of this article is the following theorem. 

\begin{theorem}[Global bounded weak solutions]\label{T1.2}\hfill\\
Let $\Omega\subset\mathbb{R}^d$ be bounded with sufficiently smooth boundary. Let the initial data $0\le u_{i,0}\in L^\infty(\Omega)$, assume the conditions \eqref{G},\eqref{M} and \eqref{P} and 
{ 
$m_i > \max\{\nu - 1; 1\}$ for all $i=1\ldots S$ as required by Theorem \ref{thm:1.1}. 
Finally, {in dimensions $d\geq 3$, we additionally assume}
\begin{equation}\label{condstrong}
m_i>\nu - \frac{4}{d+2},\qquad\forall i=1\ldots S.
\end{equation}}

Then, any weak solution of \eqref{S} obtained in Theorem \ref{thm:1.1} is bounded in $L^{\infty}(\Omega)$ and grows in time at most polynomially in the sense that, for any $T>0$,
\[
\|u_i\|_{L^\infty(Q_T)}\leq C_T,\quad\forall i=1\ldots S
\]
where $C_T$ is a constant which depends at most polynomially on time. Consequently, these solutions are locally (in $Q_T$) H\"older continuous, see e.g. \cite{Vaz07}. 

\end{theorem}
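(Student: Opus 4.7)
The plan is to carry out a quantitative Moser/Alikakos-type bootstrap on the integrability of $u_i$, starting from the $L^{m_i+1}(Q_T)$ bound supplied by Theorem~\ref{thm:1.1}. The growth condition \eqref{G} converts any $L^q(Q_T)$ bound of $u_i$ into an $L^{q/\nu}(Q_T)$ bound of the reaction $f_i(u)$, while the smoothing effect of the porous medium diffusion, quantified via a weighted energy estimate and a parabolic Gagliardo--Nirenberg embedding, feeds this back into a strictly higher integrability of $u_i$. Iterating finitely many times and keeping explicit track of the $T$-dependence at each step simultaneously produces the $L^\infty$-bound and its polynomial-in-time growth.

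For the energy step, I would multiply the $i$-th equation in \eqref{S} by $u_i^{r-1}$ with $r>1$ and integrate over $\Omega\times(0,t)$; thanks to the Neumann boundary condition this gives
\[
\frac{1}{r}\int_\Omega u_i(t)^r+\frac{4d_im_i(r-1)}{(r+m_i-1)^2}\int_0^t\!\!\int_\Omega\bigl|\nabla u_i^{(r+m_i-1)/2}\bigr|^2=\frac{1}{r}\int_\Omega u_{i,0}^r+\int_0^t\!\!\int_\Omega f_i(u)u_i^{r-1}.
\]
By \eqref{G} and H\"older's inequality the reaction integral is controlled by $\int u_i^{r-1+\nu}$ plus lower-order terms with at most polynomial $T$-factors: once $u_i\in L^q(Q_T)$ is available, the choice $r:=q-\nu+1$ is admissible and yields both $u_i\in L^\infty(0,T;L^r(\Omega))$ and $\nabla u_i^{(r+m_i-1)/2}\in L^2(Q_T)$ with constants polynomial in $T$.

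Next, a parabolic Gagliardo--Nirenberg interpolation applied to $w:=u_i^{(r+m_i-1)/2}$, with its mean controlled through the $L^1$ mass bound inherited from \eqref{M}--\eqref{P}, produces $w\in L^{2(a+d)/d}(Q_T)$ with $a=2r/(r+m_i-1)$, and translating back gives $u_i\in L^{\Phi(q)}(Q_T)$ where $\Phi(q):=(q-\nu+1)\frac{d+2}{d}+(m_i-1)$. A direct computation shows that $\Phi(q)>q$ is equivalent to $q>\frac{d+2}{2}\nu-\frac{d}{2}m_i-1$, and evaluated at the starting point $q_0:=m_i+1$ this rearranges precisely into $m_i>\nu-\frac{4}{d+2}$, i.e.\ condition \eqref{condstrong} (in dimensions $d=1,2$ the Sobolev embedding is subcritical and the condition collapses to the weaker $m_i>\max\{\nu-1,1\}$). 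Since $\Phi(q)/q\to(d+2)/d>1$ as $q\to\infty$, the iterates $q_k:=\Phi^k(q_0)$ grow geometrically, so finitely many rounds reach any prescribed level; a closing Moser step, letting $r\to\infty$ in the energy identity above, then converts the $L^{q_k}$ bounds into the desired $L^\infty$ bound, and the H\"older regularity follows from the local theory for the porous medium equation with bounded source \cite{Vaz07}.

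The main obstacle is balancing two opposing constraints on the test exponent $r$: it must be small enough that $\int f_i(u)u_i^{r-1}$ can be absorbed by the available $L^q$-norm (forcing $r\le q-\nu+1$), yet large enough that the Gagliardo--Nirenberg output $\Phi(q)$ strictly exceeds $q$. The sharp threshold $m_i>\nu-4/(d+2)$ is exactly what makes this balance feasible already at the starting exponent, and the polynomial-in-$T$ bookkeeping must be carried consistently through every iterate so that the final $L^\infty$ constant remains polynomial in time.
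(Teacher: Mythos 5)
Your bootstrap is essentially the strategy of the paper: start from the duality bound $u_i\in L^{m_i+1}(Q_T)$, convert $L^q$-integrability of $u$ into $L^{q/\nu}$-integrability of $f_i(u)$ via \eqref{G}, test with powers of $u_i$, complete the gradient term to an $H^1$-norm of $u_i^{(r+m_i-1)/2}$, interpolate, and iterate; your gain map $\Phi(q)=(q-\nu+1)\tfrac{d+2}{d}+m_i-1$ reproduces exactly the paper's threshold $q>\tfrac{d(\nu-m_i)+2(\nu-1)}{2}$ and hence \eqref{condstrong} at $q_0=m_i+1$. The differences are organizational and in the closing step. The paper factors the argument through a standalone smoothing lemma for the scalar inhomogeneous porous medium equation (Lemma \ref{heat-regularity}, with its own internal Moser-type iteration in $p_n$), an outer iteration $q_{n+1}=\frac{(md+2)q_n}{\nu(d+2)-2q_n}$ run with $m=\min_i m_i$ for all components simultaneously (Lemma \ref{lem:S}), and then passes to $L^\infty$ not by letting $r\to\infty$ in the energy estimates, but via the $L^p$--$L^\infty$ smoothing estimate for the porous-medium semigroup combined with Duhamel's formula (Lemma \ref{Linf}); this last choice is what makes the polynomial-in-$T$ bookkeeping of the $L^\infty$-constant immediate, whereas your ``closing Moser step'' would require a full Alikakos-type iteration with uniform control of the constants as the exponents blow up, which you only sketch. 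Two smaller points to repair if you write this out: (i) the $L^2(Q_T)$-part of $w=u_i^{(r+m_i-1)/2}$ needed to complete $\|\nabla w\|_{L^2}$ to $\|w\|_{H^1}$ is \emph{not} controlled by the $L^1$ mass bound (the exponent $(r+m_i-1)/2$ exceeds $1$ in general); the paper absorbs it instead by interpolating between the just-derived $L^\infty(0,T;L^{r})$ bound and the Sobolev term via Young's inequality, cf.\ \eqref{h2}, and you should do the same; (ii) $f_i(u)$ is bounded by $C(1+|u|^\nu)$ involving \emph{all} components, so the iteration must be run simultaneously for every species with a common integrability exponent (effectively with $m=\min_i m_i$), which is implicit in your write-up but should be made explicit. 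With these adjustments your route is sound and yields the same statement, trading the paper's semigroup/Duhamel endgame for a Moser endgame.
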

%
\begin{remark}[Weakened assumptions on mass dissipation and initial data]
	If one is only interested in the boundedness of solutions but not in the polynomial growth of the $L^{\infty}$-norm, then the mass dissipation condition \eqref{M} can in fact be weakened to
	\begin{equation*}
		\sum_{i=1}^{S}\lambda_i f_i(u) \leq C_1\sum_{i=1}^{S}|u_i| + C_2 \quad \text{ for all } u \in \mathbb R^S,
	\end{equation*}
	for some positive constants $C_1, C_2$.

Also the assumed initial regularity $u_{i,0}\in L^\infty(\Omega)$ is not optimal and could be relaxed 
to $L^{p}$ integrability for sufficiently large $p$ according to the details of the proof yet at the price of the readability 
of the Theorem.  
\end{remark}
\medskip

Theorem \ref{T1.2} contributes to the large literature on global existence and boundedness of solutions to reaction-diffusion systems, which nevertheless poses still many open questions due to the lack of a unified approach (maximum principles do not hold for general systems). The largest part of the available literature, however, considers the case of linear diffusion, i.e. $m_i = 1$ in system \eqref{S}. 
We refer the reader to the extensive review of Michel Pierre \cite{Pie10} and the references therein, in particular
\cite{
Ba94,
BeRe10,
BouPi00,
CaVa09,
DeFePiVo07,
HeLaVe98,
HMP87,
KaKi00,
Laam11,
Ma83,
Mor89,
Pi03,
PiSchmi97} 

The case of nonlinear diffusion, on the other hand, is much less investigated. Most of the existing results considered special systems with special structures, see e.g. \cite{Smo94,Leu09} . Up to the best of our knowledge,  system \eqref{S} under the general structural assumptions \eqref{G}-\eqref{M}-\eqref{P} was only studied very recently in  \cite{LP17}, where the authors showed the global existence of weak solutions. Therefore, the present paper serves as the first result to show the boundedness of weak solutions by assuming stronger conditions on porous medium exponents. 
Moreover, our proof allows to estimate explicitly the growth in time of the $L^{\infty}$-norm, which turns out to be essential in studying the large time behaviour of solutions in the following second part of the paper.

\medskip
The second main result of this paper proves exponential convergence to equilibrium for a class of 
reaction-diffusion systems with porous media diffusion of the form \eqref{S}, where 
the nonlinearities model the following reversible reaction with arbitrarily many chemical substances 
\begin{equation}\label{revrec}
\alpha_1\mathcal{A}_1+\cdots+\alpha_M\mathcal{A}_M
\underset{k_f}{\overset{k_b}{\leftrightharpoons}}
\beta_1\mathcal{B}_1+\cdots+\beta_N\mathcal{B}_N.
\end{equation}
Here $\alpha_i,\beta_i\in[1,+\infty)$ are the stoichiometric coefficients of the $M+N$ involved substances $\mathcal A_1, \ldots, \mathcal A_M$, $\mathcal B_1, \ldots, \mathcal B_N$ and $k_f,k_b>0$ are the forward and backward reaction rate constants. For simplicity, yet without loss of generality, we assume $k_f=k_b=1$. By applying mass action kinetics to \eqref{revrec} and by using the short notation
\begin{equation*}
		a = (a_1, \ldots, a_M), \quad b = (b_1, \ldots, b_N), \quad \alpha = (\alpha_1, \ldots, \alpha_M), \quad \beta = (\beta_1, \ldots, \beta_N),
\end{equation*}
\begin{equation*}
		a^\alpha = \prod_{i=1}^Ma_i^{\alpha_i}, \quad\quad b^{\beta} = \prod_{j=1}^{N}b_j^{\beta_j},
\end{equation*}
we study the following reaction-diffusion system:
	\begin{equation}\label{R}\tag{R}
	\begin{cases}
	\begin{aligned}
	\partial_t a_i - d_i\Delta(a_i^{m_i}) 
	&= f_i(a,b):= -\alpha_i\left[a^\alpha - b^\beta \right],\ \forall i=1,\ldots,M  &&\quad x\in\Omega, \quad\ t>0,\\
	\partial_t b_j - h_j\Delta(b_j^{p_j}) 
	&= g_j(a,b):= \beta_j\left[a^\alpha - b^\beta \right],\ \forall j=1,\ldots,N  &&\quad x\in\Omega, \quad\ \, t>0,\\
	d_i\nabla (a_i^{m_i})\cdot \overrightarrow{n} &= 0, \quad \forall i=1,\ldots,M,&&\quad x\in\partial\Omega, \quad t>0,\\
	h_j\nabla (b_j^{p_j})\cdot \overrightarrow{n} &= 0, \quad \forall j=1,\ldots,N,\quad &&\quad x\in\partial\Omega, \quad t>0,\\
	a_i(x,0) &= a_{i,0}(x),\quad \forall i=1,\ldots,M,&&\quad x\in\Omega,\\
	b_j(x,0) &= b_{j,0}(x),\quad\forall j=1,\ldots,N, &&\quad x\in\Omega.
	\end{aligned}
	\end{cases}
	\end{equation}
Here $d_i, h_j >0$ are diffusion coefficients and $m_i, p_j>1$ are nonlinear diffusion exponents. It is clear that \eqref{R} is a special case of \eqref{S}. It is also straightforward to verify condition \eqref{P}, while condition \eqref{G} is satisfied by choosing,
	$$
	\nu=\max\Biggl\{\sum_{i=1}^M\alpha_i,\sum_{j=1}^N\beta_j\Biggr\}.
	$$
Finally condition \eqref{M} is a consequence from noting that 
\[
	\frac{1}{M}\sum_{i=1}^M\frac{1}{\alpha_i	}f_i(a,b)
	+
	\frac{1}{N}\sum_{j=1}^N\frac{1}{\beta_j	}g_j(a,b)=0. 
\]

After having the conditions \eqref{P}, \eqref{G} and \eqref{M} verified, 
Theorem \ref{thm:1.1} implies the existence of global weak nonnegative solutions of system \eqref{R} provided
	\begin{equation*}
	m_i, p_j > \max\left\{\nu - 1; 1\right\} \quad \text{ for all } \quad i=1\ldots M, \; j=1\ldots N.
	\end{equation*}
Moreover by Theorem \ref{T1.2}, these solutions are bounded in dimensions $d=1,2$, or in dimensions $d\geq 3$ when  additionally assuming
\begin{equation*}
	m_i, p_j > \nu - \frac{4}{d+2} \quad \text{ for all } \quad i=1\ldots M, \; j=1\ldots N.
\end{equation*}
By multiplying the equations for $a_i$ and $b_j$ with $\beta_j$ and $\alpha_i$, respectively, and by adding the resulting terms, integration by parts with the homogeneous Neumann boundary conditions implies that these solutions satisfies the following mass conservation laws:
	\begin{equation}\label{conservation-laws}
	\beta_j\int_\Omega a_i(x,t)dx+\alpha_i\int_\Omega b_j(x,t)dx
	=
	\beta_j\int_\Omega a_{i,0}(x)dx+\alpha_i\int_\Omega b_{j,0}(x)dx
	=:M_{ij}>0,\qquad \forall i,j,
	\end{equation}
amongst which exactly $M+N-1$ linearly independent conservation laws ought to be selected 
and only the corresponding $M+N-1$ components of the initial mass vector $M_{ij}$ 
need to be calculated from the initial data. 

System \eqref{R} possesses for each fixed positive initial mass vector $(M_{ij})$ a unique positive detailed balanced equilibrium $(a_\infty,b_\infty) = (a_{1,\infty}, \ldots, a_{M,\infty}, b_{1,\infty}, \ldots, b_{N,\infty}) \in (0,\infty)^{M+N}$, which is the solutions of the following equilibrium equations:
	\[
	\begin{cases}
	\prod^M_{i=1}a_{i\infty}^{\alpha_i}=\prod^N_{j=1}b_{j\infty}^{\beta_j},
	\\
	\beta_ja_{i\infty}+\alpha_ib_{j\infty}=M_{ij},\quad \forall i,j,
	\end{cases}
	\]
where we recall that the second line constitutes of only  $M+N-1$ linearly independent conditions. 
\medskip

To study the convergence to equilibrium for \eqref{R}, we will use the so-called entropy method, which recently proved a highly suitable tool in the analysis of the large-time-behaviour of dissipative PDE systems. With respect to reaction-diffusion systems with linear diffusion, we refer in particular to \cite{DF06,DF07,DF08,MHM15,DFT16,FT17a,FT17}. 

The key \textit{entropy functional} (or in this case the free energy functional) of system \eqref{R} is defined by
\begin{equation*}
E[a,b] = \sum_{i=1}^{M}\int_{\Omega}(a_i\ln a_i - a_i + 1)dx + \sum_{j=1}^{N}\int_{\Omega}(b_j\ln b_j - b_j + 1)dx
\end{equation*}
which dissipates according to   the nonnegative {\it entropy production functional}, that is formally
\begin{equation*}
-\frac{d}{dt}E[a,b]=: D[a,b] 
= 
\sum_{i=1}^{M}d_i\int_{\Omega}\frac{|\nabla a_i|^2}{a_i^{2-m_i}}dx 
+	\sum_{j=1}^{N}h_j\int_{\Omega}\frac{|\nabla b_j|^2}{b_j^{2-p_j}}dx
+ \int_{\Omega}(a^\alpha- b^\beta)\ln{\frac{a^\alpha}{b^\beta}}dx
\ge 0.
\end{equation*}
In the case of linear diffusion, i.e. $m_i = p_j = 1$ for all $i=1\ldots M, j=1\ldots N$, the convergence to equilibrium of solutions of \eqref{R} (or some special cases) was recently studied in e.g. \cite{DF06,DF08,MHM15,FT17a,PSZ16}. 

Let us briefly review the entropy method used in the case of linear diffusion and then highlight the difficulties to be overcome in the current paper when dealing with nonlinear diffusion. In the case of  linear diffusion, the entropy production writes as
\begin{equation*}
D_{lin}[a,b] 
= 
\sum_{i=1}^{M}d_i\int_{\Omega}\frac{|\nabla a_i|^2}{a_i}dx 
+	\sum_{j=1}^{N}h_j\int_{\Omega}\frac{|\nabla b_j|^2}{b_j}dx
+ \int_{\Omega}(a^\alpha- b^\beta)\ln{\frac{a^\alpha}{b^\beta}}dx
\ge 0
\end{equation*}
and the entropy method consists in establishing a functional inequality of the form
\begin{equation}\label{eede}
	D_{lin}[a,b] \geq \lambda (E[a,b] - E[\ainf,\binf])
\end{equation}
for all functions $a = (a_i)$, $b= (b_j)$ satisfying the conservation laws \eqref{conservation-laws}. In order to do that, one first uses an additivity property of the relative entropy to calculate
\begin{equation*}
	\begin{aligned}
		E[a,b] - E[\ainf,\binf] & = \left[\sum_{i=1}^{M}\int_{\Omega}a_i\log{\frac{a_i}{\overline{a}_i}}dx + \sum_{j=1}^{N}\int_{\Omega}b_j\log{\frac{b_j}{\overline{b}_j}}dx\right]
	\\
		& \qquad + \left[\sum_{i=1}^{M}(\overline{a}_i\log{\frac{\overline{a}_i}{a_{i,\infty}}} - \overline{a}_i + a_{i,\infty}) + \sum_{j=1}^{N}(\overline{b}_j\log{\frac{\overline{b}_j}{b_{j,\infty}}} - \overline{b}_j + b_{j,\infty}) \right]\\
		&=: I_1 + I_2.
	\end{aligned}
\end{equation*}
The term $I_1$ is controlled in terms of the entropy production $D_{lin}[a,b]$ thanks to the Logarithmic Sobolev Inequality (LSI)
\begin{equation}\label{LSI}
	\int_{\Omega}\frac{|\nabla f|^2}{f}dx \geq C_{\mathrm{LSI}}\int_{\Omega}f\log\frac{f}{\overline{f}}dx \quad \text{ for all } \quad 0 \leq f \in H^1(\Omega).
\end{equation}
The remain term $I_2$ only involves the averages of the concentrations $\overline{a}_i, \overline{b}_j$ and can be controlled by $D_{lin}[a,b]$ through lengthly, technical, but constructive estimates (see e.g. \cite{FT17a,PSZ16} for more details). Note that this entropy approach applies successfully to more complex chemical reaction networks than \eqref{R}, see \cite{MHM15,DFT16,FT17,Mie}. We emphasised that the Logarithmic Sobolev Inequality \eqref{LSI} is not only used to control the term $I_1$ but also plays an important role in the estimates controlling the term $I_2$.
\medskip

In the case of nonlinear diffusion as here considered, we need a generalisation of the LSI \eqref{LSI} to exponents $m_i, p_j \ge1$. 
In this paper, we utilise the following generalisation (see e.g. \cite{MM17}): for any $m > (d-2)_+/d$ with $(d-2)_+ = \max\{d-2;0\}$, there exists a constant $C(\Omega,m)>0$ such that
\begin{equation*}
	\int_{\Omega}\frac{|\nabla f|^2}{f^{2-m}}dx \geq C(\Omega,m)\,\overline{f}^{\,m-1}\int_{\Omega}f\log\frac{f}{\overline{f}}dx.
\end{equation*}
When $m=1$, this coincides with the classical Logarithmic Sobolev inequality \eqref{LSI}. For system \eqref{R}, we have in  particular 
\begin{equation}\label{general}
	\int_{\Omega}\frac{|\nabla a_i|^2}{a_i^{2 - m_i}}dx \geq C(\Omega, m_i)\,\overline{a}_i^{\,m_i - 1}\int_{\Omega}a_i\log\frac{a_i}{\overline{a}_i}dx  \quad \text{ and } \quad \int_{\Omega}\frac{|\nabla b_j|^2}{b_j^{2-p_j}}dx \geq C(\Omega,p_j)\,\overline{b}_j^{\,p_j - 1}\int_{\Omega}b_j\log\frac{b_j}{\overline{b}_j}dx.
\end{equation}
Note that if we assume the averages $\overline{a}_i$ and $\overline{b}_j$ to be bounded below by a positive constant, then one can apply the same strategy as for the linear diffusion case in order to obtain the convergence to equilibrium. However, there is no chemical/physical reason for such a lower bound to hold in the transient behaviour of system \eqref{R} subject to general initial data. There are even perfectly admissible initial conditions, where some averages are zero since 
the corresponding species have not yet been formed.

To overcome this difficulty, we first observe that the mass conservation laws \eqref{conservation-laws} subject to a positive mass vector $M_{i,j}>0$ implies that the averages $\overline{a}_i$ and $\overline{b}_j$ cannot be simultaneously small. Thus, at any  fixed time at least one of the inequalities in \eqref{general} is useful, since either $\overline{a}_i \geq \varepsilon$ or $\overline{b}_j \geq \varepsilon$ for some suitably chosen $\varepsilon>0$ depending on $M_{i,j}>0$. Secondly, we are able to compensate the still lacking lower bounds in 
\eqref{general}
by a phenomena which can be called "indirect diffusion effect"  and which means in our context that the reversible reaction \eqref{revrec} 
transfers diffusion from a species $a_i$ (with strictly positive diffusion bound in \eqref{general} due to $\overline{a}_i \geq \varepsilon$) 
to other species $b_j$ (with lacking positive lower diffusion bound) in terms of a functional inequality, see Lemma \ref{IndDiff} below. 

Examples of indirect diffusion effect inequalities were already derived in e.g. \cite{DF07,FLT17,FPT17}, yet typically with a proof which requires uniform in time $L^{\infty}$-bounds on the solutions, which is a severe technical restriction as 
$L^{\infty}$-bounds for general reaction-diffusion systems are often unknown due to the lack of comparison principles. {Note that also the $L^\infty$-bounds of Theorem \ref{thm:1.1} would be insufficient since polynomially growing  and not uniform in time.}
\medskip

In this work, we are able to prove an indirect diffusion functional inequality without using any $L^{\infty}$-bounds on solutions but instead by exploiting the special structure of \eqref{R}, see Lemma \ref{IndDiff}. Nevertheless, in the remaining part of 
applying the entropy method, 
the polynomial growth in time of the $L^{\infty}$-norm of Theorem \ref{T1.2} is still needed in one estimate concerning the relative entropy, yet the $L^{\infty}$-norm appears only within a 
logarithm. 
While it is unclear to us whether 
this is essential or just technical necessary in our approach,
it allows to derive a \textit{time-dependent} entropy-entropy production inequality (as a generalisation of the functional inequality \eqref{eede}) of the form
\begin{equation}\label{EEP}
	D[a(T),b(T)] \geq \Theta(T)(E[a(T),b(T)] - E[\ainf,\binf]) \quad \text{ for all }\quad T>0,
\end{equation}
where the function $\Theta: \mathbb R_+ \to \mathbb R_+$ is of order $1/\ln(1+T)$ and satisfies $\int_0^{+\infty}\Theta(\tau)d\tau = +\infty$. Thus, a classical Gronwall argument implies explicit {\it algebraic} decay of $E[a(T),b(T)] - E[\ainf,\binf]$ to zero and thus, algebraic convergence to equilibrium in relative entropy. 

To obtain exponential from algebraic decay, we show that after some sufficiently large time $T_0>0$, the averages $\overline{a}_i(T)$ and $\overline{b}_j(T)$ are bounded below by a positive constant for all $T\geq T_0$ (since the equilibrium $(a_\infty,b_\infty)$ consists of positive constants). Hence, for $T\geq T_0$, we can use the inequalities \eqref{general} like in the case for systems with linear diffusion and obtain accordingly exponential convergence to equilibrium. Finally, since $T_0$ can be explicitly estimated, one recovers global exponential convergence to equilibrium (i.e. for all $T\geq 0$) at the price of a smaller, yet explicit constant.
Hence, the second main result of this paper is the following theorem.

	\begin{theorem}\label{T1.3}
	Let $\Omega\subset\mathbb{R}^d$ be bounded with sufficiently smooth boundary. Consider system \eqref{R} -- which satisfies the conditions \eqref{G},\eqref{M} and \eqref{P} -- subject to non-negative initial data $a_{i,0},b_{j,0}\in L^\infty(\Omega)$. Assume for all $i=1\ldots M, j=1\ldots N$ that
	\[
	m_i, p_j > \max\{\nu-1;1\}, \qquad \text{where } \quad 
	\nu=\max\Biggl\{\sum_{i=1}^M\alpha_i,\sum_{j=1}^N\beta_j\Biggr\}.
	\]
	Moreover, in dimensions $d\geq 3$, we additionally assume 
		\[
		m_i,p_j>\nu - \frac{4}{d+2}, \qquad \text{ for all } \quad i=1\ldots M,\ j=1\ldots N.
		\]
Finally, consider a positive initial mass vector $M_{ij}>0$, which uniquely determines
a positive equilibrium $(a_{i\infty},b_{j\infty})$ of system \eqref{R}.

Then, the bounded global weak solutions of Theorem \ref{T1.2} converge exponentially to $(a_\infty,b_\infty)$ in all $L^p$-norms for $1\leq p<\infty,$ that is
		\[
		\sum^M_{i=1}\|a_i(t)-a_{i\infty}\|_{L^p(\Omega)}
		+
		\sum^N_{j=1}\|b_j(t)-b_{j\infty}\|_{L^p(\Omega)}
		\leq
		C\, e^{-\lambda_pt}
		\]
		where the constant $C>0$ and the convergence rate $\lambda_p>0$  can be computed explicitly.
	\end{theorem}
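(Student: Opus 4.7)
The plan is to derive a time-dependent entropy entropy-production inequality of the shape \eqref{EEP} with $\Theta(T)$ of order $1/\ln(1+T)$, integrate it via Gronwall's argument to obtain algebraic decay of the relative entropy, then bootstrap the positivity of the averages in order to upgrade to exponential decay, and finally combine a Csisz\'ar--Kullback-type inequality with interpolation against the polynomial $L^\infty$-bound of Theorem \ref{T1.2} to conclude $L^p$-convergence for $1\leq p<\infty$.

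First I would split the relative entropy
\[
E[a,b]-E[\ainf,\binf] = I_1 + I_2,
\]
where $I_1:=\sum_i\int_\Omega a_i\ln(a_i/\overline{a}_i)\,dx+\sum_j\int_\Omega b_j\ln(b_j/\overline{b}_j)\,dx$ captures oscillations around the spatial averages and $I_2$ depends only on $\overline{a}_i,\overline{b}_j,\ainf,\binf$. The generalised Log-Sobolev inequality \eqref{general} controls $I_1$ by the diffusive part of $D[a,b]$, except for the prefactors $\overline{a}_i^{m_i-1},\overline{b}_j^{p_j-1}$ which degenerate when the averages are small. Using the conservation laws \eqref{conservation-laws}, i.e. $\beta_j\overline{a}_i+\alpha_i\overline{b}_j=M_{ij}>0$, one sees that for every pair $(i,j)$ at least one of $\overline{a}_i,\overline{b}_j$ is bounded below by an explicit $\varepsilon>0$, so at least some of the diffusion terms retain a useful strength at every time.

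The main obstacle and key new ingredient is an \emph{indirect diffusion functional inequality} (to be stated as Lemma \ref{IndDiff}) showing that, whenever some $\overline{a}_i\geq\varepsilon$, the corresponding diffusion term $\int_\Omega|\nabla a_i|^2/a_i^{2-m_i}\,dx$ together with the reaction entropy production $\int_\Omega(a^\alpha-b^\beta)\ln(a^\alpha/b^\beta)\,dx$ controls the oscillations of any other species, even one whose average is vanishingly small. The crucial point, not present in earlier indirect-diffusion results, is to derive this inequality without using any $L^\infty$-bound on the solutions: I would exploit the elementary inequality $(x-y)\ln(x/y)\geq 4(\sqrt{x}-\sqrt{y})^2$ to rewrite the reaction contribution as a quadratic control on $\sqrt{a^\alpha}-\sqrt{b^\beta}$, then decompose each factor around its average and use Poincar\'e-type arguments adapted to the algebraic structure of \eqref{revrec} to propagate information from the species with positive average bound to the others. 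The polynomial-in-time $L^\infty$-bound of Theorem \ref{T1.2} would only enter in a subsequent ancillary estimate (linking $I_2$ to the entropy production), and only inside a logarithm, which is precisely the reason why the coefficient $\Theta(T)$ comes out of order $1/\ln(1+T)$.

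Once the inequality $D[a(T),b(T)]\geq\Theta(T)(E[a(T),b(T)]-E[\ainf,\binf])$ is established and $\int_0^\infty\Theta(\tau)\,d\tau=+\infty$ is verified, Gronwall's inequality delivers explicit algebraic decay of the relative entropy, and a Csisz\'ar--Kullback--Pinsker type inequality converts this into algebraic $L^1$-decay of $a_i-a_{i,\infty}$ and $b_j-b_{j,\infty}$. Consequently there exists an explicitly computable time $T_0>0$ such that for all $T\geq T_0$ the averages are bounded below by $a_{i,\infty}/2$ and $b_{j,\infty}/2$. For $T\geq T_0$ the prefactors in \eqref{general} no longer degenerate, the indirect-diffusion step becomes unnecessary, and the standard linear-diffusion entropy argument of \cite{FT17a,PSZ16} produces a time-independent EEP inequality $D\geq\lambda(E-E[\ainf,\binf])$ with explicit $\lambda>0$, hence exponential decay of the relative entropy for $T\geq T_0$. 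Concatenating the two regimes, applying Csisz\'ar--Kullback once more, and interpolating the resulting exponential $L^1$-decay against the polynomial $L^\infty$-bound of Theorem \ref{T1.2} yields the claimed exponential $L^p$-decay for every $1\leq p<\infty$, with explicit constant $C$ and rate $\lambda_p$.
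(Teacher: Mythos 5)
Your proposal follows essentially the same route as the paper: the time-dependent entropy entropy-production inequality built from the generalised Logarithmic Sobolev inequality \eqref{general}, the conservation laws \eqref{conservation-laws} and the indirect diffusion transfer inequality of Lemma \ref{IndDiff} (proved without $L^\infty$-bounds), followed by Gronwall, the Csisz\'ar--Kullback--Pinsker inequality, an explicit $T_0$ after which the averages are bounded below so that the linear-diffusion estimate of Lemma \ref{EnEnDiEs} gives a time-independent inequality, and finally interpolation with the polynomial $L^\infty$-bound of Theorem \ref{T1.2}. The only small imprecision is your claim that the $L^\infty$-bound enters solely in the estimate of $I_2$: in the degenerate regime it is in fact needed to bound the full relative entropy from above (as in Lemma \ref{variant-eede}, since the indirect diffusion inequality only controls $\|\sqrt{b_j}-\overline{\sqrt{b_j}}\|^2$ rather than the relative entropy terms themselves), but as it still appears only inside a logarithm, the $1/\ln(1+T)$ order of $\Theta$ and the remainder of your argument are unaffected.
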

	
	\noindent{\bf Notation:}
	\begin{itemize}
		\item We denote by $\|\cdot\|$ the usual norm of $L^2(\Omega)$. For other $1\leq p < +\infty$, we write $\|\cdot\|_p$ as the norm of $L^p(\Omega)$.
		\item For any $T>0$, $Q_T = \Omega\times (0,T)$ and $L^p(Q_T) = :L^p(0,T;L^p(\Omega))$. The space-time norm is defined as usual
		\begin{equation*}
		\|f\|_{L^p(Q_T)}^p = \int_{0}^T\!\!\int_{\Omega}|f(x,t)|^pdxdt.
		\end{equation*}
		\item Throughout this work, we will denote by $C_T$ a generic positive constant which depends on certain parameters, and more importantly $C_T$ grows at most polynomially, i.e. there exists a polynomial $P(x)$ such that $C_T \leq P(T)$ for all $T>0$.
	\end{itemize}
	
	\medskip
	\noindent{\bf Organisation of the paper:} {Section \ref{boundedness} states the proof of Theorem \ref{T1.2}. The proof of Theorem \ref{T1.3} is detailed in Section \ref{sec:conv}. This proof uses also a previously proven entropy-entropy production estimate for reaction-diffusion systems with linear diffusion, which is recalled in Section \ref{sec:add} for the sake of completeness. Finally, the existence of global weak solution is stated in Section \ref{app}.
}
	

\section{Boundedness and local continuity of weak solutions}\label{boundedness}
	In this section, we prove for sufficiently large diffusion exponents $m_i$ that the weak solutions obtained in Theorem \ref{thm:1.1} are actually bounded in $L^{\infty}$ and thus locally H\"older continuous. In Lemma \ref{heat-regularity}, we device a bootstrap argument for the inhomogeneous porous media equation which proves that if the porous media exponents $m_i$ and the initial integrability are high enough, then the weak solutions of Theorem \ref{thm:1.1} 
satisfy an improve integrability in a space $L^{s}(Q_T)$ and the $L^{s}$-norm grows at most polynomially in time $T$.
	
	\begin{lemma}[Smoothing effect of porous medium equation]\label{heat-regularity}\hfill\\
		Suppose that $m\geq 1$. Assume $f\in L^{p_0}(Q_T)$ for some $p_0>1$ with $\|f\|_{L^{p_0}(Q_T)} \leq C_T$. Let $u$ be a solution to the inhomogeneous porous medium equation with positive diffusion coefficient $\delta >0$ 
		\begin{equation}\label{porous}
		\begin{cases}
		\partial_t u - \delta\Delta(|u|^{m-1}u) = f, &x\in\Omega, \qquad t>0,\\
		\delta\nabla (|u|^{m-1}u)\cdot \overrightarrow{n} = 0, &x\in\partial\Omega, \quad\ t>0,\\
		u(x,0) = u_0(x), &x\in\Omega,
		\end{cases}
		\end{equation}
		and subject to initial data $u_0 \in L^{\infty}(\Omega)$. 
		Then, $u$ satisfies 
		\[
		\|u\|_{L^r(Q_T)}\leq C_T,\quad\forall r\in[1,s),
		\]
		where
		\[
		s=
		\begin{cases}
		+\infty,\quad 
		&\text{if}\quad p_0\geq\frac{d+2}{2},
		\\
		\frac{(md+2)p_0}{d+2-2p_0},\quad 
		&\text{if}\quad p_0<\frac{d+2}{2},	
		\end{cases}
		\]
and with a constant $C_T$, which only depends on $q, d, m, \Omega$ and at most polynomially on $T$.
	\end{lemma}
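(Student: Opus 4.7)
The plan is to derive an energy estimate by testing the porous medium equation against powers of $u$, combine it with a parabolic Gagliardo--Nirenberg interpolation, and then run a Moser-type bootstrap whose fixed point reproduces precisely the claimed exponent $s = (md+2)p_0/(d+2-2p_0)$.

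The first step is to multiply \eqref{porous} by $|u|^{q-1}u$ for an exponent $q\ge 1$ (to be tuned later) and integrate over $\Omega$. Using the chain rule identity
\[
\nabla(|u|^{m-1}u)\cdot\nabla(|u|^{q-1}u) \;=\; \frac{4mq}{(m+q)^2}\,\bigl|\nabla v\bigr|^2, \qquad v := |u|^{(m+q)/2 - 1}u,
\]
and integrating in time yields an energy estimate of the form
\[
\sup_{t\in[0,T]}\int_{\Omega}|u(t)|^{q+1}\,dx + c_{m,q}\iTO |\nabla v|^2\,dx\,dt \;\le\; C\int_\Omega |u_0|^{q+1}\,dx + C_q\iTO f\,|u|^{q-1}u\,dx\,dt,
\]
whose last term is bounded via Hölder by $\|f\|_{L^{p_0}(Q_T)}\|u\|_{L^{qp_0'}(Q_T)}^{q}$ with $p_0' := p_0/(p_0-1)$, and which can be absorbed into the LHS by Young's inequality provided $\|u\|_{L^{qp_0'}(Q_T)}$ is already under control.

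The second step applies the parabolic Gagliardo--Nirenberg (Ladyzhenskaya-type) inequality: for $v\in L^\infty\bigl(0,T;L^{2(q+1)/(m+q)}(\Omega)\bigr)\cap L^2(0,T;H^1(\Omega))$ one has $v\in L^{r}(Q_T)$ for $r = 2 + 4(q+1)/(d(m+q))$ (with the obvious adjustment via any finite Sobolev exponent when $d\in\{1,2\}$), whose norm depends polynomially in $T$ on the two ingredients. Translating back to $u$ via $|v|^2 = |u|^{m+q}$, this yields
\[
\|u\|_{L^{\sigma(q)}(Q_T)}^{\sigma(q)} \;\le\; C_T\bigl(1 + \|u\|_{L^{qp_0'}(Q_T)}^{q}\bigr), \qquad \sigma(q) := m + q + \frac{2(q+1)}{d}.
\]
Starting from a base integrability $u\in L^{p_1}(Q_T)$ (available from $u_0\in L^\infty(\Omega)$ via the energy estimate at $q = p_1-1$) one then iterates by choosing $q_n := p_n/p_0'$ and setting $p_{n+1} := \sigma(q_n) = m + p_n/p_0' + 2(p_n/p_0'+1)/d$. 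A direct computation shows that this affine recursion has rate $(d+2)(p_0-1)/(dp_0)$ and unique fixed point
\[
p^* \;=\; \frac{(md+2)\,p_0}{\,d+2-2p_0\,}.
\]
This rate is $<1$ exactly when $p_0<(d+2)/2$, in which case $p_n\uparrow p^* = s$ and any target $r<s$ is reached in finitely many steps; when $p_0\ge (d+2)/2$, the rate is $\ge 1$ and $p_n\to+\infty$, so any finite $r$ is reached. Since only finitely many iterations are used and each step is an affine/power operation applied to polynomially bounded quantities, the polynomial-in-$T$ character of $C_T$ is preserved throughout.

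The main technical obstacle is the rigorous justification of the test function $|u|^{q-1}u$ at the weak-solution level furnished by Theorem \ref{thm:1.1}; this should be handled by carrying out the computation on the regularised problem used to construct the weak solutions and then passing to the limit in the inequalities (all of which are stable under the relevant modes of convergence). A secondary point is choosing the starting exponent $p_1$ so that the first iteration already produces a strictly larger exponent, which requires only that $p_1 > p_0'$ and is easily arranged since $u_0\in L^\infty(\Omega)$.
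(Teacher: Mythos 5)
Your overall strategy---testing with $|u|^{q-1}u$, feeding the resulting energy estimate into a parabolic Gagliardo--Nirenberg embedding, and running an affine Moser-type recursion whose fixed point is $(md+2)p_0/(d+2-2p_0)$---is essentially the argument of the paper, with slightly different bookkeeping: the paper iterates the two families $L^{\infty}(0,T;L^{p_n}(\Omega))$ and $L^{r_n}(0,T;L^{s_n}(\Omega))$ separately (with the interpolation exponent tuned in \eqref{theta-1}--\eqref{theta-2}) and merges them only at the end, whereas you collapse everything into a single space-time exponent at each step; your rate $(d+2)(p_0-1)/(dp_0)$ and fixed point are correct and lead to the same limiting exponent $s$ as \eqref{iteration} together with the final interpolation step.

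There is, however, one genuine gap: the initialization of your bootstrap is circular as written. Your energy estimate controls the forcing term by $\|f\|_{L^{p_0}(Q_T)}\|u\|_{L^{qp_0'}(Q_T)}^{q}$ with $p_0'=p_0/(p_0-1)$, and therefore needs $\|u\|_{L^{qp_0'}(Q_T)}$ to be ``already under control''; but at the base step no space-time integrability of $u$ is provided by the hypotheses of the lemma (only $u_0\in L^{\infty}(\Omega)$ and $f\in L^{p_0}(Q_T)$), so asserting that the base integrability is ``available from $u_0\in L^\infty(\Omega)$ via the energy estimate at $q=p_1-1$'' begs the question, and the side condition $p_1>p_0'$ does not resolve it. The paper avoids this by choosing the first exponent $\mu=p_0$: then H\"older in $x$ alone gives $\frac{d}{dt}\|u\|_{p_0}^{p_0}\leq p_0\|f\|_{p_0}\|u\|_{p_0}^{p_0-1}$, a differential inequality that closes on itself through the elementary ODE lemma \eqref{elementary} and yields the starting bounds \eqref{p0} and \eqref{s0}. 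You can repair your scheme the same way (take $q=p_0-1$, for which $qp_0'=q+1$, and apply H\"older in space at each time), and start the recursion from the exponent this produces. A secondary omission of the same kind: the parabolic Gagliardo--Nirenberg step needs the full $L^2(0,T;H^1(\Omega))$ norm of $v$, i.e.\ also $\int_0^T\int_\Omega|u|^{m+q}dxdt$, which is not among your controlled quantities; the paper adds this term to both sides and absorbs it by interpolating against the $L^\infty(0,T;L^{p})$ bound and Young's inequality, as in \eqref{h2}. Both points are fixable, but they are precisely where the written proposal does not yet close.
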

\begin{remark}
In the linear case $m=1$ Lemma \ref{heat-regularity} recovers the corresponding regularity estimates of the heat equation, see \cite{CDF14}. 
While the smoothing effect stated in Lemma \ref{heat-regularity} is 
certainly well-known, our main contribution here lies in the
polynomial growth in time of the norms, which will be crucial in Section \ref{sec:conv}.
\end{remark}
	\begin{proof}
		The idea of the proof of this lemma follows \cite[Lemma 3.3]{CDF14} and is divided into several steps.
		
		\medskip
\noindent{\bf Step 1.} Let $\mu> 1$. By multiplying \eqref{porous} by $\mu |u|^{\mu-1}\sign(u)$ then integrating over $\Omega$, we obtain
\begin{equation}\label{e1}
\frac{d}{dt}\|u\|_{\mu}^{\mu} -\delta\mu\int_{\Omega}\Delta(|u|^{m-1}u) |u|^{\mu-1}\sign(u)dx = 
\mu\int_{\Omega}f |u|^{\mu-1}\sign(u)dx.
\end{equation}
Integration by parts and the homogeneous Neumann boundary condition $\nabla(|u|^{m-1}u)\cdot \overrightarrow{n} = 0$ lead to
\begin{equation*}
\begin{aligned}
-\delta\mu\int_{\Omega}\Delta(|u|^{m-1}) |u|^{\mu-1}\sign(u)dx &= {m(\mu-1)\mu}\delta\int_{\Omega}|u|^{m+\mu-3}|\nabla u|^2dx
+ m \mu \delta\int_{\Omega} |u|^{m+p-2}|\nabla u|^2 \delta(u)dx\\
		&\ge  \underbrace{\frac{4m(\mu-1)\mu\delta}{(m+\mu-1)^2}}_{=:{C(\mu)}}\int_{\Omega}\left|\nabla\left(|u|^{\frac{m+\mu-1}{2}}\right)\right|^2dx.
		\end{aligned}
		\end{equation*}
		By Young's inequality
		\begin{equation*}
		\left|\mu\int_{\Omega}f |u|^{\mu-1}\sign(u)dx\right| \leq \mu\|f\|_{p_0}\|u\|_{\frac{p_0(\mu-1)}{p_0-1}}^{\mu-1}.
		\end{equation*}
		Therefore, it follows from \eqref{e1} that
		\begin{equation}\label{eq-mu}
		\frac{d}{dt}\|u\|_{\mu}^{\mu} + C(\mu)\int_{\Omega}\left|\nabla\left(|u|^{\frac{m+\mu-1}{2}}\right)\right|^2dx \leq \mu\|f\|_{p_0}\|u\|_{\frac{p_0(\mu-1)}{p_0-1}}^{\mu-1}.
		\end{equation}
		
		\medskip
\noindent{\bf Step 2.} Choose $\mu = p_0>1$ in \eqref{eq-mu}, we get
		\begin{equation}\label{eq-p0}
		\frac{d}{dt}\|u\|_{p_0}^{p_0} + C(p_0)\int_{\Omega}\left|\nabla\left(|u|^{\frac{m+p_0-1}{2}}\right)\right|^2dx \leq p_0\|f\|_{p_0}\|u\|_{p_0}^{p_0-1}.
		\end{equation}
		By applying for $r<1$ the elementary inequality 
		\begin{equation}\label{elementary}
		y' \leq \alpha(t)y^{1-r} \quad \Longrightarrow \quad y(T) \leq \left[y(0)^{r}+r\int_0^T\alpha(t)dt\right]^{1/r},
		\end{equation}
		to \eqref{eq-p0} with $r=1/p_0$ and $y(t) = \|u(t)\|_{p_0}^{p_0}$, we obtain
		\begin{equation}\label{CT0}
		\|u(T)\|_{p_0}^{p_0} \leq \left[\|u_0\|_{p_0} + \int_0^{T}\|f\|_{p_0}dt \right]^{p_0} \leq \left[\|u_0\|_{p_0} + \|f\|_{L^{p_0}(Q_T)}T^{(p_0-1)/p_0} \right]^{p_0} =: C_{T,0}.
		\end{equation}
		That means
		\begin{equation}\label{p0}
		u \in L^{\infty}(0,T;L^{p_0}(\Omega)) \quad \text{ and } \quad \|u(T)\|_{p_0}^{p_0} \leq C_{T,0}
		\end{equation}
		with $C_{T,0}$ is defined in \eqref{CT0} grows at most polynomially in $T$. By integrating \eqref{eq-p0} with respect to $t$ on $(0,T)$ and by using Young's inequality and the convention $r_0 := m+p_0 - 1>1$, we get
		\begin{equation*}
		\begin{aligned}
		C(p_0)\iTO\left|\nabla\left(|u|^{\frac{r_0}{2}}\right)\right|^2dxdt &\leq \|u_0\|_{p_0}^{p_0} + p_0\int_0^T\|f\|_{p_0}\|u\|_{p_0}^{p_0-1}dt\\
		&\leq \|u_0\|_{p_0}^{p_0} + p_0\|f\|_{L^{p_0}(Q_T)}\|u\|_{L^{p_0}(Q_T)}^{p_0-1}.
		\end{aligned}
		\end{equation*}
		By adding $C(p_0)\iTO\left||u|^{\frac{r_0}{2}}\right|^2dxdt$ to both sides, we have
		\begin{equation}\label{h0}
		\begin{aligned}
		C(p_0)\int_0^T\left\||u|^{\frac{r_0}{2}}\right\|_{H^1(\Omega)}^2dt &= C(p_0)\int_0^T\left[
		\int_{\Omega}\left|\nabla\left(|u|^{\frac{r_0}{2}}\right)\right|^2dx + \int_{\Omega}\left| |u|^{\frac{r_0}{2}} \right|^2 dx\right]dt\\
		&\leq \|u_0\|_{p_0}^{p_0} + p_0\|f\|_{L^{p_0}(Q_T)}\|u\|_{L^{p_0}(Q_T)}^{p_0-1} + C(p_0)\int_0^T\|u\|_{r_0}^{r_0}dt.
		\end{aligned}
		\end{equation}	
		
		By the Sobolev's embedding, we have
		\begin{equation}\label{h1}
		C(p_0)\int_0^T\left\||u|^{\frac{r_0}{2}}\right\|^2_{H^1(\Omega)} \geq C(p_0)\,C_S^2\int_0^T\|u\|_{s_0}^{r_0}dt \quad \text{ with } \quad s_0 = \begin{cases}\frac{r_0d}{d-2} &\text{ if } d \geq 3,\\r_0 < s_0 < \infty \text{ arbitrary } &\text{ if } d = 1,2.\end{cases}
		\end{equation}
		On the other hand, by using the bound $\|u(t)\|_{p_0}^{p_0} \leq C_{T,0}$ in \eqref{p0} and the interpolation inequality 
		\begin{equation*}
		\|u\|_{r_0} \leq \|u\|_{p_0}^{\gamma}\|u\|_{s_0}^{1-\gamma} \leq C_{T,0}^{\gamma/p_0}\|u\|_{s_0}^{1-\gamma} \quad \text{with} \quad \frac{1}{r_0} = \frac{\gamma}{p_0} + \frac{1-\gamma}{s_0}
		\quad \text{for} \quad \gamma = \frac{2p_0}{2p_0+(m-1)d}\in(0,1],
		\end{equation*}
		we estimate {in the cases $m>1$ for which $\gamma<1$}
		\begin{equation}\label{h2}
		C(p_0)\int_0^T\|u\|_{r_0}^{r_0}dt \leq C(p_0)\int_{0}^{T}C_{T,0}^{\gamma r_0/p_0} \|u\|_{s_0}^{(1-\gamma)r_0}dt \leq \frac{C(p_0)\,C_S^2}{2}\int_0^T\|u\|_{s_0}^{r_0}dt + CC_{T,0}^{{r_0}/p_0}T,
		\end{equation}
		where we have used Young's inequality (with the exponents $1=(1-\gamma) + \gamma$) in the last step. {Note that if $m=1$, the bound \eqref{h2} holds still true yet without the first term and with $r_0/p_0=1$.} 
		Inserting \eqref{h1} and \eqref{h2} into \eqref{h0} leads to
		\begin{equation}\label{DT0}
		\begin{aligned}
		\int_0^T\|u\|_{s_0}^{r_0}dt &\leq \frac{2}{C(p_0)\,C_S^2}\left[\|u_0\|_{p_0}^{p_0} + p_0\|f\|_{L^{p_0}(Q_T)}\|u\|_{L^{p_0}(Q_T)}^{p_0-1}  + CC_{T,0}^{{r_0}/p_0}T\right]\\
		&\leq \frac{2}{C(p_0)\,C_S^2}\left[\|u_0\|_{p_0}^{p_0} + p_0\|f\|_{L^{p_0}(Q_T)}\left(TC_{T,0}\right)^{\frac{p_0-1}{p_0}}  + CC_{T,0}^{{r_0}/p_0}T\right] =: D_{T,0} \quad (\text{use (\ref{p0}))}.
		\end{aligned}
		\end{equation}
		
		It follows that
		\begin{equation}\label{s0}
		u \in L^{r_0}(0,T;L^{s_0}(\Omega)) \quad \text{ with } \quad \begin{cases}s_0 = \frac{r_0d}{d-2} &\text{ if } d\geq 3,\\
		r_0< s_0 <\infty \text{ arbitrary } &\text{ if } d=1,2,
		\end{cases}
		\end{equation}
		and
		\begin{equation*}
		\int_0^T\|u\|_{s_0}^{r_0}dt \leq D_{T,0}
		\end{equation*}
		with $D_{T,0}$ defined in \eqref{DT0}. 	
		
		Next, we construct a sequence $p_n \geq 1$
		based on the estimate \eqref{p0} and \eqref{s0} such that
		\begin{equation}\label{pn}
		\|u(T)\|_{p_n}^{p_n} \leq C_{T,n}
		\end{equation}
		and
		\begin{equation}\label{sn}
		\int_0^T\|u\|_{s_n}^{r_n}dt \leq D_{T,n} \quad \text{ with }\quad r_n = m + p_n - 1 \quad \text{ and } \quad  
		\begin{cases}s_n = \frac{r_nd}{d-2} &\text{ if } d\geq 3,\\
		r_n < s_n <\infty \text{ arbitrary } & \text{ if } d=1,2,
		\end{cases}
		\end{equation}
		in which $C_{T,n}$ and $D_{T,n}$ are constants growing at most polynomially in $T$.
		
		\medskip 
\noindent{\bf Step 3 (Iteration of \eqref{pn}).} In \eqref{eq-mu}, we set $\mu = p_{n+1}$ for $p_{n+1}$ to be chosen later. Thus, we have
		\begin{equation}\label{eq-pn+1}
		\frac{d}{dt}\|u\|_{{p_{n+1}}}^{{p_{n+1}}} + C({p_{n+1}})\int_{\Omega}\left|\nabla\left(|u|^{\frac{r_{n+1}}{2}}\right)\right|^2dx \leq p_{n+1}\|f\|_{p_0}\|u\|_{\frac{p_0({p_{n+1}}-1)}{p_0-1}}^{{p_{n+1}}-1},
		\end{equation}	
		where we recall that $r_{n+1} = m + p_{n+1}-1$. By $L^p$- interpolation, we have
		\begin{equation*}
		\|u\|_{\frac{p_0({p_{n+1}}-1)}{p_0-1}} \leq \|u\|_{p_{n+1}}^{1-\theta}\|u\|_{s_n}^{\theta}
		\end{equation*}
		and where $p_{n+1}>1$ has to be chosen such that $\frac{p_0(p_{n+1}-1)}{p_0-1}\in(p_{n+1},s_n)$ with $p_{n+1}<s_n$, which entails $\theta\in (0,1)$ in
		\begin{equation}\label{theta-1}
		\frac{p_0-1}{p_0(p_{n+1}-1)} = \frac{1-\theta}{p_{n+1}} + \frac{\theta}{s_n}.
		\end{equation}
		Note that $\frac{p_0(p_{n+1}-1)}{p_0-1} > p_{n+1}$ is always satisfied provided that $p_{n+1}>p_0$, i.e. that the sequence $p_n$ is strictly monotone increasing.
		
		It then follows from \eqref{eq-pn+1} (by neglecting the second term on the left hand side) that
		\begin{equation*}
		\frac{d}{dt}\|u\|_{p_{n+1}}^{p_{n+1}} \leq p_{n+1}\|f\|_{p_0}\|u\|_{s_n}^{\theta(p_{n+1}-1)}\left(\|u\|_{p_{n+1}}^{p_{n+1}}\right)^{1 - \frac{1+\theta(p_{n+1}-1)}{p_{n+1}}}.
		\end{equation*}
		By applying again the elementary inequality \eqref{elementary} with $y(t) = \|u(t)\|_{p_{n+1}}^{p_{n+1}}$ and $r = \frac{1+\theta(p_{n+1}-1)}{p_{n+1}} < 1$, it yields
		\begin{equation}\label{pnn}
		\begin{aligned}
		\|u(T)\|_{p_{n+1}}^{p_{n+1}}
		&\leq \left[\|u_0\|_{p_{n+1}}^{1+\theta(p_{n+1}-1)} + (1+\theta(p_{n+1}-1))\int_0^T\|f\|_{p_0}\|u\|_{s_n}^{\theta(p_{n+1}-1)}dt \right]^{\frac{p_{n+1}}{1+\theta(p_{n+1}-1)}}\\
		&\leq \left[\|u_0\|_{p_{n+1}}^{1+\theta(p_{n+1}-1)} + (1+\theta(p_{n+1}-1))\|f\|_{L^{p_0}(Q_T)}\biggl(\int_0^T\|u\|_{s_n}^{\theta(p_{n+1}-1)\frac{p_0}{p_0-1}}dt\biggr)^{\!\frac{p_0-1}{p_0}} \right]^{\frac{p_{n+1}}{1+\theta(p_{n+1}-1)}}.
		\end{aligned}
		\end{equation}
		In order to continue estimating by using \eqref{sn}, we choose $p_{n+1}$ as
		\begin{equation}\label{theta-2}
		\theta(p_{n+1}-1)\frac{p_0}{p_0-1} = r_n.
		\end{equation}
		Since $r_n= s_n\frac{d-2}{d}$, 
		eq. \eqref{theta-2} implies $ \frac{\theta}{s_n} = (1-\frac{2}{d})\frac{p_0-1}{p_0(p_{n+1}-1)}$ and thus with \eqref{theta-1}
		\begin{equation}\label{thetan}
		\theta = 1 - \frac{2}{d} \frac{p_0-1}{p_0}\frac{p_{n+1}}{p_{n+1}-1}<1.
		\end{equation}
		In order to verify that above choice of $p_{n+1}$ satisfies 
		$\frac{p_0(p_{n+1}-1)}{p_0-1}<s_n$, we insert \eqref{thetan} into \eqref{theta-2} and calculate 
		\begin{equation*}
		(p_{n+1}-1)\frac{p_0}{p_0-1}-\frac{2}{d} p_{n+1} = s_n \frac{d-2}{d} \quad\Rightarrow\quad
		s_n-\frac{p_0(p_{n+1}-1)}{p_0-1} = \frac{2}{d}(s_n-p_{n+1})>0.
		\end{equation*}
		Similar, by recalling $s_n \frac{d-2}{d}=r_n=m-1+p_n$, we get 	the iteration
		\begin{equation}\label{iteration}
		p_{n+1} = p_n\frac{d(p_0-1)}{p_0(d-2) + 2} + \frac{d[(m-1)(p_0-1) + p_0]}{p_0(d-2)+2}.
		\end{equation}

		Altogether, by inserting \eqref{theta-2} into  \eqref{pnn}, we obtain thanks to \eqref{sn}
		\begin{equation}\label{CTn+1}
		\begin{aligned}
		\|u(T)\|_{p_{n+1}}^{p_{n+1}} &\leq \left[\|u_0\|_{p_{n+1}}^{^{1+\theta(p_{n+1}-1)}} + (1+\theta(p_{n+1}-1))\|f\|_{L^{p_0}(Q_T)}\left(\int_0^T\|u\|_{s_n}^{r_n}dt\right)^{\frac{p_0-1}{p_0}} \right]^{\frac{p_{n+1}}{1+\theta(p_{n+1}-1)}}\\
		&\leq \left[\|u_0\|_{p_{n+1}}^{^{1+\theta(p_{n+1}-1)}} + (1+\theta(p_{n+1}-1))\|f\|_{L^{p_0}(Q_T)}D_{T,n}^{\frac{p_0-1}{p_0}} \right]^{\frac{p_{n+1}}{1+\theta(p_{n+1}-1)}} =: C_{T,n+1}
		\end{aligned}
		\end{equation}	
		and thus
		\begin{equation}\label{bound-n+1}
		u \in L^{\infty}(0,T;L^{p_{n+1}}(\Omega)) \quad \text{ and } \quad \|u(T)\|_{p_{n+1}}^{p_{n+1}} \leq C_{T,n+1}.
		\end{equation}
		
		\medskip
\noindent{\bf Step 4 (Iteration of \eqref{sn}).} We will use similar arguments to {\bf Step 2}.	
		Integrating \eqref{eq-pn+1} and adding $\iTO\left||u|^{\frac{r_{n+1}}{2}}\right|^2dxdt$ to both sides yields in particular
		\begin{equation}\label{h3}
		\begin{aligned}
		 C(&p_{n+1})\int_0^T\left\||u|^{\frac{r_{n+1}}{2}}\right\|_{H^1(\Omega)}^2dt
		 = 		C({p_{n+1}})\int_0^T\int_{\Omega}\left[\left|\nabla\left(|u|^{\frac{r_{n+1}}{2}}\right)\right|^2dx + \left||u|^{\frac{r_{n+1}}{2}}\right|^2dx\right]dt &&\\
		&\leq \|u_0\|_{p_{n+1}}^{p_{n+1}} +  p_{n+1}\int_0^T\|f\|_{p_0}\|u\|_{\frac{p_0({p_{n+1}}-1)}{p_0-1}}^{{p_{n+1}}-1}dt + C({p_{n+1}})\int_0^T\|u\|_{r_{n+1}}^{r_{n+1}}dt&&\\
		&\leq \|u_0\|_{p_{n+1}}^{p_{n+1}} +p_{n+1}\int_0^T\|f\|_{p_0}\|u\|_{s_n}^{\theta(p_{n+1}-1)}\|u\|_{p_{n+1}}^{(1-\theta)(p_{n+1}-1)}dt + C({p_{n+1}})\int_0^T\|u\|_{r_{n+1}}^{r_{n+1}}dt &&(\theta \text{ in } \eqref{theta-1})\\
		&\leq \|u_0\|_{p_{n+1}}^{p_{n+1}} +{p_{n+1}}{C_{T,n+1}^{(1-\theta)\frac{(p_{n+1}-1)}{p_{n+1}}}}\int_0^T\|f\|_{p_0}\|u\|_{s_n}^{\theta(p_{n+1}-1)}dt+ C({p_{n+1}})\int_0^T\|u\|_{r_{n+1}}^{r_{n+1}}dt &&(\text{using } \eqref{bound-n+1})\\
		& \leq \|u_0\|_{p_{n+1}}^{p_{n+1}} +{p_{n+1}}{C_{T,n+1}^{(1-\theta)\frac{(p_{n+1}-1)}{p_{n+1}}}}\|f\|_{L^{p_0}(Q_T)}\biggl(\int_0^T\|u\|_{s_n}^{r_n}dt\biggr)^{\!\frac{p_0-1}{p_0}} + C({p_{n+1}})\int_0^T\|u\|_{r_{n+1}}^{r_{n+1}}dt&& (\text{using } \eqref{theta-2})\\
		& \leq \|u_0\|_{p_{n+1}}^{p_{n+1}} +{p_{n+1}}{C_{T,n+1}^{(1-\theta)\frac{(p_{n+1}-1)}{p_{n+1}}}}\|f\|_{L^{p_0}(Q_T)}D_{T,n}^{\frac{p_0-1}{p_0}} + C({p_{n+1}})\int_0^T\|u\|_{r_{n+1}}^{r_{n+1}}dt && (\text{using } \eqref{sn}).
		\end{aligned}
		\end{equation}
		Now by Sobolev's embedding
		\begin{multline}\label{h4}
		C(p_{n+1})\int_0^T\left\||u|^{\frac{r_{n+1}}{2}}\right\|_{H^1(\Omega)}^2dt \geq C(p_{n+1})\,C_S^2\int_0^T\|u\|_{s_{n+1}}^{r_{n+1}}dt\\
		\text{ with } \quad s_{n+1} = \begin{cases}\frac{r_{n+1}d}{d-2} &\text{ if } d \geq 3,\\r_{n+1} < s_{n+1} < \infty \text{ arbitrary } &\text{ if } d = 1,2.\end{cases}
		\end{multline}
		By the bound $\|u(t)\|_{p_{n+1}}^{p_{n+1}} \leq C_{T,n+1}$, the interpolation inequality
		\begin{equation}\label{h5}
		\|u\|_{r_{n+1}} \leq \|u\|_{p_{n+1}}^{\gamma}\|u\|_{s_{n+1}}^{1-\gamma} \leq C_{T,n+1}^{\gamma/p_{n+1}}\|u\|_{s_{n+1}}^{1-\gamma}
		\end{equation}
		\begin{equation*}
			\text{ with } \quad \frac{1}{r_{n+1}} = \frac{\gamma}{p_{n+1}} + \frac{1-\gamma}{s_{n+1}} \quad\text{for}\quad \gamma = \frac{2p_{n+1}}{2p_{n+1} + (m-1)d}\in(0,1].
		\end{equation*}
		{Like in {\bf Step 2} in case $m>1$ and $\gamma<1$, we have	by Young's inequality,}
		\begin{equation*}
		\begin{aligned}
		C(p_{n+1})\int_0^T\|u\|_{r_{n+1}}^{r_{n+1}}dt &\leq C(p_{n+1})\int_{0}^{T}C_{T,n+1}^{\gamma r_{n+1}/p_{n+1}}\,\|u\|_{s_{n+1}}^{(1-\gamma)r_{n+1}}dt\\
		&\leq \frac{C(p_{n+1})\,C_S^2}{2}\int_{0}^{T}\|u\|_{s_{n+1}}^{r_{n+1}}dt + CTC_{T,n+1}^{{r_{n+1}}/p_{n+1}}
		\end{aligned}
		\end{equation*}
		analog to \eqref{h2} {while the case $m=1$ and $r_{n+1}/p_{n+1}=1$ follows without interpolation and the first term on the right-hand-side above}.
		Combining \eqref{h3}, \eqref{h4} and \eqref{h5} yields
		\begin{equation*}
		\frac{C(p_{n+1})\,C_S^2}{2}\int_0^T\|u\|_{s_{n+1}}^{r_{n+1}}dt \leq \|u_0\|_{p_{n+1}}^{p_{n+1}} +{p_{n+1}}{C_{T,n+1}^{(1-\theta)\frac{(p_{n+1}-1)}{p_{n+1}}}}\|f\|_{L^{p_0}(Q_T)}D_{T,n}^{\frac{p_0-1}{p_0}} + CTC_{T,n+1}^{{r_{n+1}}/p_{n+1}},
		\end{equation*}
		hence
		\begin{equation*}
		\int_0^T\|u\|_{s_{n+1}}^{r_{n+1}}dt \leq D_{T,n+1}
		\end{equation*}
		with 
		\begin{equation}\label{DTn+1}
		D_{T,n+1}:= \frac{2}{C(p_{n+1})C_S^2}\left[\|u_0\|_{p_{n+1}}^{p_{n+1}} +{p_{n+1}}{C_{T,n+1}^{(1-\theta)\frac{(p_{n+1}-1)}{p_{n+1}}}}\|f\|_{L^{p_0}(Q_T)}D_{T,n}^{\frac{p_0-1}{p_0}} + CTC_{T,n+1}^{{r_{n+1}}/p_{n+1}}\right].
		\end{equation}
		\medskip
\noindent{\bf Step 5.} Passing to the limit as $n\to\infty$. Considering the iteration \eqref{iteration}, the only possible fixed point $p_{\infty}$ of the  sequence $p_n$ is 
		\begin{equation*}
		p_{\infty} = \frac{d[(m-1)(p_0-1) + p_0]}{2[\frac{d+2}{2}-p_0]}.
		\end{equation*}
		Hence $p_{\infty}<0$ if and only if $p_0 > \frac{d+2}{2}$. 
		In particular, it is straightforward to check that the sequence 
		$p_n$ define by \eqref{iteration} is strictly monotone increasing if and only if either $p_n<p_{\infty}$ in the case $p_0<\frac{d+2}{2}$ or $p_n>p_{\infty}$ in the case $p_0>\frac{d+2}{2}$ when $p_{\infty}<0$ holds or $p_0=\frac{d+2}{2}$ where 
		$p_{\infty}=+\infty$.

		Therefore, we have as $n\to\infty$
		\begin{equation*}
		p_n \longrightarrow \begin{cases}
		p_{\infty} \quad \text{ if } \quad  p_0 < \frac{d+2}{2},\\
		+\infty \quad \text{ if } \quad p_0 \geq \frac{d+2}{2}.
		\end{cases}
		\end{equation*}
		
		\medskip
		\noindent{\bf Step 6 (Interpolation).} From \eqref{pn} and \eqref{sn} and by using the interpolation
		\begin{equation*}
		L^{\infty}(0,T;L^{p_n}(\Omega))\cap L^{r_n}(0,T;L^{s_n}(\Omega)) \hookrightarrow L^{\frac{N+2}{N}p_n + m-1}(Q_T)
		\end{equation*}
		we get $u\in L^{r}(Q_T)$ for all $r< \infty$ in the case $p_0 \geq \frac{d+2}{2}$. In the case $p_0 < \frac{d+2}{2}$, we obtain $u\in L^s(Q_T)$ for all 
		\begin{equation*}
		s < \frac{d+2}{d}p_{\infty} + m - 1 = \frac{(md+2)p_0}{d+2-2p_0}.
		\end{equation*}
		This completes the proof of Lemma \ref{heat-regularity}.
	\end{proof}
	\begin{lemma}\label{lem:S}  Let $u$ be a  weak solution to \eqref{S}  and 
		\[
		\|u\|_{L^{q_0}(Q_T)}\leq C_T,\quad
		\forall i=1,\ldots,S,\quad\text{with}\quad
		q_0>\frac{d(\nu-m)+2(\nu-1)}{2},
		\]
		where $m = \min\{m_i: i=1\ldots S\}$ and $\nu$ is defined in \eqref{G}.
		
		Then, it follows that $\|u_i\|_{L^\infty(Q_T)}\leq C_T$ for all $i=1\ldots S$.
	\end{lemma}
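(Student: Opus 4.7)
The plan is to bootstrap the integrability of the components $u_i$ via repeated application of Lemma \ref{heat-regularity}, treating each $u_i$ as a solution of an inhomogeneous porous medium equation with source $f_i(u)$. The starting observation is that the growth bound \eqref{G} transfers any estimate $\|u\|_{L^{q}(Q_T)}\leq C_T$ into $\|f_i(u)\|_{L^{q/\nu}(Q_T)}\leq C_T$, with a constant that inherits polynomial-in-$T$ growth.

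First I would set $p_0:=q_0/\nu$ and feed this into Lemma \ref{heat-regularity} applied to the $i$-th equation of \eqref{S}, whose porous medium exponent is $m_i\geq m$. As long as we are in the subcritical regime $p_0<(d+2)/2$, the lemma delivers $u_i\in L^{q_1}(Q_T)$ for every $q_1$ strictly below $\frac{(m_id+2)p_0}{d+2-2p_0}$. A short algebraic manipulation shows that the assumption $q_0>\frac{d(\nu-m)+2(\nu-1)}{2}$ is exactly equivalent to $\frac{(md+2)p_0}{d+2-2p_0}>q_0$; in fact, the critical threshold is precisely the non-trivial fixed point of the map $q\mapsto\frac{(md+2)(q/\nu)}{d+2-2q/\nu}$. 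Hence a strict integrability gain $q_1>q_0$ is available, with a gap bounded away from zero.

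Next I would iterate, producing a strictly increasing sequence $q_n$ that has no finite accumulation point above the threshold (the only fixed point of the iteration map lies exactly at the critical exponent that $q_0$ strictly exceeds). Therefore, after finitely many steps, $q_n/\nu\geq (d+2)/2$ and we enter the supercritical case of Lemma \ref{heat-regularity}, which produces $u_i\in L^r(Q_T)$ for every $r<\infty$. Along this finite chain, the polynomial-in-$T$ dependence of the constants is preserved, since each bootstrap step only multiplies the polynomial degree by a fixed factor depending on $d, m, \nu$ and the $m_i$.

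Finally, to pass from $L^r$ with arbitrary $r$ to $L^\infty(Q_T)$, I would run a Moser-type iteration on the differential inequality \eqref{eq-mu}, taking $\mu=\mu_k\to\infty$, applying the Sobolev embedding as in Step 2 of the proof of Lemma \ref{heat-regularity}, and using the already secured $L^r$-bound for arbitrarily large $r$ to dominate the source $f_i(u)$. The main obstacle is bookkeeping rather than strategy: one must verify carefully that the fixed point of the iteration coincides with the stated threshold (so the sequence $q_n$ is genuinely strictly increasing and escapes to the supercritical regime), and that the polynomial-in-$T$ growth of the constants $C_{T,n}, D_{T,n}$ supplied by Lemma \ref{heat-regularity} is preserved through the finitely many bootstrap steps and through the final Moser limit, as required for the applications in Section \ref{sec:conv}.
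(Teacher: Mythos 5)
Your bootstrap is the same as the paper's: you convert $\|u\|_{L^{q_0}(Q_T)}\leq C_T$ into $f_i(u)\in L^{q_0/\nu}(Q_T)$ via \eqref{G}, feed this into Lemma \ref{heat-regularity}, and iterate $q_{n+1}=\frac{(md+2)q_n}{\nu(d+2)-2q_n}$; your observation that the hypothesis $q_0>\frac{d(\nu-m)+2(\nu-1)}{2}$ is exactly the condition $q_{n+1}/q_n>1$ (equivalently, that the threshold is the nontrivial fixed point of the iteration map) is precisely the computation in the paper, and the conclusion that finitely many steps land you in the supercritical regime $q_n/\nu\geq\frac{d+2}{2}$, with polynomial-in-$T$ constants preserved along the finite chain, is correct.

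Where you diverge is the last step, and this is where your argument has a gap as written. The paper does not run a Moser iteration on \eqref{eq-mu}: after one more application of Lemma \ref{heat-regularity} it has not only $u_i\in L^r(Q_T)$ for all $r<\infty$ but also $u_i\in L^{\infty}(0,T;L^q(\Omega))$ for all $q<\infty$ (this is the content of the bounds of type \eqref{pn}, \eqref{bound-n+1}), hence $f_i(u)\in L^{\infty}(0,T;L^s(\Omega))$ with norm $\leq C_T$, and then a single application of Lemma \ref{Linf} — Duhamel's formula combined with the $L^p$--$L^{\infty}$ smoothing estimate for the porous-medium semigroup from \cite{GM13} — yields $\|u_i\|_{L^{\infty}(Q_T)}\leq C_T$ with manifestly polynomial dependence on $T$. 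Your proposed Moser limit $\mu_k\to\infty$ in \eqref{eq-mu} is not obviously wrong, but the claim you defer as ``bookkeeping'' is exactly the nontrivial point of the lemma: in each step of the scheme underlying Lemma \ref{heat-regularity} the constant of the previous step is raised to a power (cf. the recursions for $C_{T,n+1}$ and $D_{T,n+1}$), so over an infinite iteration one must prove that the accumulated exponents stay bounded, otherwise one ends up with bounds like $T^{c_k}$ with $c_k\to\infty$ and no polynomial-in-$T$ conclusion. Since the at-most-polynomial growth of $C_T$ is what Section \ref{sec:conv} actually needs (and is stated in the paper as the main contribution of these regularity lemmas), leaving that verification unexecuted means the key assertion of Lemma \ref{lem:S} is not established by your argument; the simplest repair is to keep track of the $L^{\infty}(0,T;L^q(\Omega))$ information from the bootstrap and finish with the Duhamel/semigroup estimate of Lemma \ref{Linf} instead of an infinite iteration.
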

	\begin{proof} 
From $u_i \in L^{q_0}(Q_T)$ for all $i=1,\ldots, S$, we have $f_i(u)\in L^{q_0/\nu}(Q_T)$. Moreover note that the quasi-positivity assumption \eqref{P} ensure non-negative solutions $u$ for non-negative 
initial data $u_{i,0}$. Hence, the concentrations $u_i$ satisfy the (non-sign-changing) porous media equation 
		\begin{equation*}
		\partial_t u_i - d_i\Delta(u_i^{m_i}) = f_i(u) \in L^{q_0/\nu}(Q_T).
		\end{equation*}
	Lemma \ref{heat-regularity} implies that if $q_0/\nu \geq \frac{d+2}{2}$, then $u_i\in L^{r}(Q_T)$ for all $r<\infty$, while if $q_0/\nu < \frac{d+2}{2}$, then 
		\begin{equation*}
		u_i \in L^{s}(Q_T) \quad \text{ for all } \quad s < q_1:= \frac{(md+2)q_0}{\nu(d+2)-2q_0} \leq  \frac{(m_id+2)q_0}{\nu(d+2)-2q_0}, \qquad \text{for all } i=1\ldots S,
		\end{equation*}
		since $m\leq m_i$. We then construct a sequence $q_n$ (equally for all $i=1,\ldots,S$) such that
		\begin{equation}\label{iteration-q}
		q_{n+1} = \frac{(md+2)q_n}{\nu(d+2)-2q_n} \quad \text{ for } n\geq 0.
		\end{equation}
		It follows that
		\begin{equation*}
		\frac{q_{n+1}}{q_n} = \frac{md+2}{\nu(d+2)-2q_n}.
		\end{equation*}
		Therefore, as long as $\nu(d+2)-2q_n>0 \iff q_n < \frac{(d+2)\nu}{2}$,
		\begin{equation*}
		\frac{q_{n+1}}{q_n} > 1 \text{ for all } n \geq 0 \quad \Longleftrightarrow \quad q_0 > \frac{d(\nu-m)+2(\nu-1)}{2}.
		\end{equation*}
		Hence with $q_0 > \frac{d(\nu-m)+2(\nu-1)}{2}$, after finitely many steps we arrive at $q_{n} > \frac{(d+2)\nu}{2}$. From $u_i\in L^{s}(Q_T)$ for all $s<q_n$, we have in particular $u_i\in L^{\frac{(d+2)\nu}{2}}(Q_T)$, which implies $f_i(u)\in L^{\frac{d+2}{2}}(Q_T)$ for $i=1,\ldots, S$. 
By applying Lemma \ref{heat-regularity} once more we obtain $u_i \in L^r(Q_T)\cap L^{\infty}(0,T;L^q(\Omega))$ for all $r, q < \infty$. Thus,
		\begin{equation*}
		\partial_t u_i - d_i\Delta(u_i^{m_i}) = f_i(u)\in L^{\infty}(0,T;L^s(\Omega)) \quad \text{ for all } s<\infty
		\end{equation*}
		with $\|f_i(u)\|_{L^{\infty}(0,T;L^s(\Omega))} \leq C_T$. By considering $s$ large enough such that
		\begin{equation*}
		\frac{d}{d(m_i - 1) + 2s} < 1
		\end{equation*}
		holds, we have 
		\begin{equation*}
		\|u_i\|_{L^\infty(Q_T)} \leq C_T \quad \text{ for all } \quad i=1,\ldots, S,
		\end{equation*}
		thanks to the following Lemma \ref{Linf}.
	\end{proof}
	\begin{lemma}\label{Linf}
		Let $m\geq 1$. Let $u$ be the solution to the no-flux porous media equation
		\begin{equation*}
		u_t - \delta\Delta(|u|^{m-1}u) = f, \qquad  \nu\cdot\nabla (|u|^{m-1}u) = 0, \qquad u(\cdot, 0) = u_0
		\end{equation*}
		with $u_0\in L^{\infty}(\Omega)$. If $\|f\|_{L^{\infty}(0,T;L^p(\Omega))} \leq C_T$ with $p$ large enough such that $d/(d(m-1)+2p) < 1$, then
		\begin{equation*}
		\|u\|_{L^{\infty}(Q_T)} \leq C_T,
		\end{equation*}
		where the constant $C_T$ depends polynomially on $T$.
	\end{lemma}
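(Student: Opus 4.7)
The plan is to perform a Moser-type iteration on the porous medium equation to bootstrap the given $L^\infty(0,T;L^p)$ integrability of $f$ together with $u_0\in L^\infty(\Omega)$ up to an $L^\infty(Q_T)$ estimate on $u$, all while tracking the polynomial-in-$T$ dependence at each step.

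First, exactly as in \textbf{Step 1} of the proof of Lemma \ref{heat-regularity}, I would test the equation against $|u|^{\mu-1}\sign(u)$ for $\mu>1$ to obtain
\begin{equation*}
\frac{d}{dt}\|u\|_{\mu}^{\mu} + C(\mu)\int_{\Omega}\left|\nabla\bigl(|u|^{(m+\mu-1)/2}\bigr)\right|^{2}dx \leq \mu\|f\|_{p}\,\|u\|_{p(\mu-1)/(p-1)}^{\mu-1}.
\end{equation*}
Using Sobolev embedding on $|u|^{(m+\mu-1)/2}$ and $L^q$-interpolation between $L^{\mu}$ (known from the previous step) and the Sobolev-gained exponent $L^{d(m+\mu-1)/(d-2)}$, I would control the right-hand side via Young's inequality. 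The extra hypothesis $\|f\|_{L^{\infty}(0,T;L^{p})}\leq C_{T}$ is crucial because it lets the time-integration contribute only a single factor $T$ per step, and by choosing the iteration sequence $\mu_{n+1}=\sigma\,\mu_{n}$ with $\sigma>1$ determined by the Sobolev exponent, it leads to a recursion of the form
\begin{equation*}
\sup_{t\in[0,T]}\|u(t)\|_{\mu_{n+1}}^{\mu_{n+1}}\leq K_{n}(T)\,\bigl(\sup_{t\in[0,T]}\|u(t)\|_{\mu_{n}}^{\mu_{n}}\bigr)^{\beta_{n}},
\end{equation*}
with $K_{n}(T)$ polynomial in $T$ and $\beta_{n}$ bounded.

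The assumption $d/(d(m-1)+2p)<1$, i.e.\ $p>d(2-m)/2$, is precisely what is needed to guarantee $\sigma>1$ uniformly in $n$, so that $\mu_{n}\to\infty$ geometrically. Taking the $\mu_{n}$-th root and letting $n\to\infty$ in the resulting telescoping product yields $\|u\|_{L^{\infty}(Q_{T})}\leq C_{T}$. The polynomial growth of $C_{T}$ is preserved because the factor of $T$ produced at step $n$ is raised to the power $1/\mu_{n}$, and $\sum 1/\mu_{n}$ converges thanks to the geometric growth of $\mu_{n}$, so the aggregated contributions to the $T$-dependence remain polynomial.

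The main obstacle will be the careful bookkeeping of the constants $K_{n}(T)$ and exponents $\beta_{n}$ to verify that the limit is actually finite and polynomial in $T$; this is the standard technical heart of any Moser/Alikakos iteration but requires some attention since the porous-medium nonlinearity makes the coefficients $C(\mu)=4m(\mu-1)\mu\delta/(m+\mu-1)^{2}$ degenerate only algebraically in $\mu$. An alternative, which bypasses running the iteration all the way to $\mu=\infty$, is to first invoke Lemma \ref{heat-regularity} together with the hypothesis on $p$ to obtain $u\in L^{r}(Q_{T})$ for every $r<\infty$ with polynomially-in-$T$ bounds, and then to conclude via a De Giorgi truncation argument on $(|u|-k)_{+}$, which upgrades arbitrarily high $L^{r}$-integrability to $L^{\infty}$ using only local energy estimates and the sub-critical Sobolev embedding.
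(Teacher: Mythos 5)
Your plan takes a genuinely different route from the paper, but as sketched it has a real gap. The paper proves Lemma \ref{Linf} in a few lines by quoting the nonlinear $L^p$--$L^\infty$ smoothing estimate for the Neumann porous medium semigroup from \cite{GM13}, $\|S(t)u_0\|_{L^\infty}\leq C(\|u_0\|_{L^p}+\|u_0\|_{L^p}^{\sigma_p}t^{-\alpha_p})$ with $\alpha_p=d/(d(m-1)+2p)$, and a Duhamel-type bound: the hypothesis $\alpha_p<1$ enters only to make $\int_0^t(t-s)^{-\alpha_p}ds$ finite and polynomial in $T$. Your Moser iteration does not reproduce this exponent, and your claim that $p>d(2-m)/2$ is ``precisely what is needed to guarantee $\sigma>1$ uniformly in $n$'' is not correct. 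At step $\mu=\mu_{n+1}$ the right-hand side requires control of $\|u\|_{p(\mu-1)/(p-1)}$, while the dissipation $\|\nabla(|u|^{(m+\mu-1)/2})\|_2^2$, via Sobolev, only reaches the spatial exponent $(m+\mu-1)d/(d-2)$ in $d\geq 3$; the Gagliardo--Nirenberg admissibility condition $p(\mu-1)/(p-1)\leq (m+\mu-1)d/(d-2)$ is equivalent to $\mu(d-2p)\leq d(p-1)(m-1)+p(d-2)$, so for $p<d/2$ the iteration stalls at a finite exponent and $\mu_n$ cannot tend to infinity. Since the lemma is stated for all $p$ with $d/(d(m-1)+2p)<1$, i.e.\ $p>d(2-m)/2$, which for $m\geq 2$ admits $p$ as small as $1$, your argument covers only the subrange $p>d/2$ (or $d\leq 2$); the improvement from $d/2$ down to $d(2-m)/2$ is exactly the nonlinear smoothing of the PME, which a naive energy iteration does not see.

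The fallback you propose does not close this gap either. Lemma \ref{heat-regularity} gives $u\in L^r(Q_T)$ for \emph{all} $r<\infty$ only when the space-time integrability exponent of $f$ is at least $(d+2)/2$, which does not follow from the standing hypothesis on $p$ (e.g.\ $m=2$, $d=3$, $p=2$). Moreover, even granting $u\in L^r(Q_T)$ for every finite $r$, boundedness does not follow from a routine De Giorgi truncation when $f\in L^\infty(0,T;L^p(\Omega))$ with $p<d/2$: in the linear case this is false (unbounded solutions lying in every $L^r$ exist for subcritical forcing), and in the degenerate case one must quantitatively exploit the gain $k^{m-1}$ in the truncated energy estimates on high level sets --- which is again the nonlinear smoothing mechanism, and is precisely the point your sketch treats as standard without addressing it. Your approach would be adequate for the way the lemma is actually invoked in Lemma \ref{lem:S} (there $f\in L^\infty(0,T;L^s(\Omega))$ for all $s<\infty$, so one may take $p>d/2$), but it does not prove the lemma as stated; to do so you should either import an $L^p$--$L^\infty$ smoothing estimate as the paper does, or run a weighted (Alikakos-type) iteration designed to recover the rate $t^{-\alpha_p}$.
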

	\begin{remark}
This regularity is well known for porous medium equation. However, since we were unable to find a reference, which includes the polynomial grow of the constant $C_T$, we provide in the following a proof for the sake of completeness.
	\end{remark}
	\begin{proof}
		Let $S(t)$ be the nonlinear semigroup corresponding to the homogeneous equation $u_t - \delta\Delta(|u|^{m-1}u) = 0$ with homogeneous Neumann boundary condition $\nabla(|u|^{m-1}u)\cdot \overrightarrow{n} = 0$. Then, we have the following well known $L^p-L^\infty$ estimate (see e.g. \cite[Theorem 3.2]{GM13}) 
		\begin{equation*}
		\|S(t)u_0\|_{L^{\infty}(\Omega)} \leq C(\|u_0\|_{L^p(\Omega)}+\|u_0\|_{L^p(\Omega)}^{\sigma_p} t^{-\alpha_p})
		\end{equation*}
		where
		\begin{equation*}
		\sigma_p = \frac{2p}{d(m-1) + 2p} \quad \text{ and } \quad \alpha_p= \frac{d}{d(m-1)+2p}.
		\end{equation*}
		Moreover, the semi-group propagates the $L^{\infty}$-norm, i.e. 
		\begin{equation*}
		\|S(t)u_0\|_{L^{\infty}(\Omega)} \leq \|u_0\|_{L^{\infty}(\Omega)}.
		\end{equation*}
		Hence, we estimate the solution of the inhomogeneous equation by using Duhamel's formula for all $0\leq t \leq T$,
		\begin{equation*}
		\begin{aligned}
		\|u(t)\|_{L^{\infty}(\Omega)} &\leq \|S(t)u_0\|_{L^{\infty}(\Omega)} + \int_0^t\|S(t-s)f(s)\|_{L^{\infty}(\Omega)}ds\\
		&\leq \|u_0\|_{L^{\infty}(\Omega)} + \int_0^tC\left(\|f(s)\|_{L^p(\Omega)}+\|f(s)\|_{L^p(\Omega)}^{\sigma_p}(t-s)^{-\alpha_p}\right)ds\\
		&\leq \|u_0\|_{L^{\infty}(\Omega)} + C T \|f\|_{L^\infty(0,T;L^p(\Omega))} + C \|f\|_{L^\infty(0,T;L^p(\Omega))}^{\sigma_p} \frac{T^{1-\alpha_p}}{1-\alpha_p},
		\end{aligned}
		\end{equation*}
		since $\alpha_p <1$ by the assumption on $p$ guarantees the convergence of the last integral on the right hand side. This finishes the proof.
	\end{proof}
	
	Now we are ready to prove the boundedness of solutions to \eqref{S}:
	\begin{proof}[Proof of Theorem \ref{T1.2}]
		Assuming $m_i > \nu - 1$, the existence of weak solutions follows similar to \cite{LP17,PR16} and is proven in Section \ref{app} in detail. By the duality estimates in Lemma \ref{duality}, we have
		\begin{equation*}
		u_i \in L^{m_i+1}(Q_T) \quad \text{ for all}\quad i=1,\ldots, S.
		\end{equation*}
		Because $m_i > \nu - \frac{4}{2+d}$ it follows that
		\begin{equation*}
		m_i+1 > \frac{d(\nu-m_i)+2(\nu-1)}{2}.
		\end{equation*}
		Therefore, Lemma \ref{lem:S} yields $u_i \in L^{\infty}(Q_T)$ and $\|u_i\|_{L^{\infty}(Q_T)} \leq C_T$ for arbitrary $T>0$, which shows that the weak solutions are bounded and the $L^{\infty}(\Omega)$ norms grows at most polynomially in time.
		
		The local H\"older continuity of the bounded weak solutions is a classical result, see e.g. \cite{DF85} or \cite[Theorem 7.17]{Vaz07}.
	\end{proof}
	
	\section{Convergence to equilibrium}\label{sec:conv}
In this section, we prove exponential convergence to equilibrium of solutions to \eqref{R} by using the entropy method.
We start by recalling the entropy (free energy) functional
\begin{equation*}
E[a,b] = \sum_{i=1}^{M}\int_{\Omega}(a_i\ln{a_i} - a_i + 1)dx
	+
\sum_{j=1}^{N}\int_{\Omega}(b_j\ln{b_j} - b_j + 1)dx
\end{equation*}
and its non-negative entropy production (free energy dissipation) functional $D[a,b] := -\frac{d}{dt}E[a,b]$, i.e.
\begin{equation*}
D[a,b] 	= 
\sum_{i=1}^{M}d_i\int_{\Omega}\frac{|\nabla a_i|^2}{a_i^{2-m_i}}dx 
+	\sum_{j=1}^{N}h_j\int_{\Omega}\frac{|\nabla b_j|^2}{b_j^{2-p_j}}dx
+ \int_{\Omega}(a^\alpha- b^\beta)\ln{\frac{a^\alpha}{b^\beta}}dx\ge 0,
\end{equation*}
where we have used the short hand notation
\begin{equation*}
a^\alpha=\prod_{i=1}^M a_i^{\alpha_i}
\quad
\text{and}
\quad
b^\beta=\prod_{j=1}^N b_j^{\beta_j}.
\end{equation*}
Moreover, the following additivity property of the relative entropy holds
	\begin{align*}
	E[a,b]-E[a_\infty,b_\infty]
	&=
	\sum_{i=1}^M \int_\Omega \left(a_i\ln\frac{a_i}{a_{i\infty}}-a_i+a_{i\infty}\right)dx
	+  
	\sum_{j=1}^N\int_\Omega \left(b_j\ln\frac{b_j}{b_{j\infty}}-b_j+b_{j\infty}\right)dx\\
	&=
	\sum_{i=1}^M \int_\Omega \left(a_i\ln\frac{a_i}{\overline{a_{i}}}\right)dx
	+  
	\sum_{j=1}^N\int_\Omega \left(b_j\ln\frac{b_j}{\overline{b_{j}}} \right)dx\\
	&\quad +
	\sum_{i=1}^M \int_\Omega \left(\overline{a_i}\ln\frac{\overline{a_i}}{a_{i\infty}}-\overline{a_i}+a_{i\infty}\right)dx
	+  
	\sum_{j=1}^N\int_\Omega \left(\overline{b_j}\ln\frac{\overline{b_j}}{b_{j\infty}}-\overline{b_j}+b_{j\infty}\right)dx.
	\end{align*}
\medskip

{The first Lemma \ref{GenLogSob} of this section states the generalisation of the Logarithmic Sobolev Inequality, 
which shall use in our approach.}


	\begin{lemma}[A generalised Logarithmic Sobolev Inequalities, \cite{MM17}]\label{GenLogSob}\hfill\\
		Assume that $m \geq (d-2)_+/d$ where $(d-2)_+ = \max\{0, d-2\}$. Then, there exists a constant $C(\Omega,m) >0$ such that
		\begin{equation*}
		\int_{\Omega}\frac{|\nabla u|^2}{u^{2-m}}dx \geq C(\Omega,m)\,\overline{u}^{\,m-1}\int_{\Omega}u\ln{\frac{u}{\overline{u}}}dx \geq C(\Omega,m)\,\overline{u}^{\,m-1}\|\sqrt{u} - \overline{\sqrt{u}}\|^2
		\end{equation*}
		where $\overline{u}=\int_\Omega udx$.
	\end{lemma}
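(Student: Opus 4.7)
The plan is to establish the two inequalities somewhat independently, since the second one follows immediately from a pointwise Csiszár--Kullback type bound while the first requires some work. For the second inequality, I would start from the pointwise identity $x\ln(x/y) - x + y \geq (\sqrt{x} - \sqrt{y})^2$ for all $x,y>0$, which is quickly verified by the substitution $t=\sqrt{x/y}$ and the elementary inequality $\ln t \geq 1 - 1/t$. Taking $y = \overline{u}$, integrating over $\Omega$ and using that $\int_\Omega(u-\overline{u})\,dx = 0$ (when $\overline{u}$ denotes the mean), we obtain
\[
\int_\Omega u\ln\frac{u}{\overline{u}}\,dx \geq \int_\Omega (\sqrt{u}-\sqrt{\overline{u}})^2\,dx \geq \int_\Omega (\sqrt{u}-\overline{\sqrt{u}})^2\,dx = \|\sqrt{u}-\overline{\sqrt{u}}\|^2,
\]
where the last step uses that the constant $\overline{\sqrt{u}}$ minimizes the $L^2$ distance of $\sqrt{u}$ to the constants. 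Multiplying by $C(\Omega,m)\,\overline{u}^{m-1}$ completes the second inequality.

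For the main inequality, I would first exploit its scale invariance. Setting $u = \overline{u}\,w$ with $\overline{w} = 1$, one checks that both sides scale by exactly $\overline{u}^m$, so it suffices to prove
\[
\int_\Omega \frac{|\nabla w|^2}{w^{2-m}}\,dx \geq C \int_\Omega w\ln w\,dx, \qquad \text{whenever } \overline{w} = 1.
\]
The substitution $v = w^{m/2}$ converts the left-hand side into $\frac{4}{m^2}\|\nabla v\|_{L^2}^2$ and the constraint into $\int_\Omega v^{2/m}\,dx = |\Omega|$, so the task reduces to establishing
\[
\int_\Omega |\nabla v|^2\,dx \geq \widetilde{C}\int_\Omega v^{2/m}\ln v\,dx \qquad \text{under } \int_\Omega v^{2/m}\,dx = |\Omega|.
\]
The role of the assumption $m \geq (d-2)_+/d$ becomes transparent here: it is exactly the condition $2/m \leq 2^* = 2d/(d-2)$ (for $d\geq 3$), so that the Poincaré--Sobolev inequality $\|v-\overline{v}\|_{L^{2^*}} \leq C\|\nabla v\|_{L^2}$ is compatible with the constraint that $v$ is normalized in $L^{2/m}$.

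I would then close the argument by contradiction and compactness. Suppose the inequality fails along a sequence $v_n$ with $\|\nabla v_n\|_{L^2}\to 0$, $\int v_n^{2/m} = |\Omega|$, yet $\int v_n^{2/m}\ln v_n \geq c_0 > 0$. Poincaré--Sobolev forces $v_n - \overline{v_n} \to 0$ in $L^{2^*}$ and hence in $L^{2/m}$, and combined with the normalization this pushes $\overline{v_n}\to 1$. Consequently $v_n\to 1$ in $L^{2/m}$ and almost everywhere along a subsequence, so $w_n = v_n^{2/m}\to 1$ a.e. The condition $m \geq (d-2)_+/d$ simultaneously guarantees uniform integrability of $w_n\ln w_n = \frac{2}{m}v_n^{2/m}\ln v_n$ (by bounding $v_n$ in $L^{2^*}$ and thus $w_n$ in $L^{m\cdot 2^*/2}$ with exponent strictly greater than $1$, enough to dominate the mild logarithmic growth via Vitali's theorem). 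Therefore $\int w_n\ln w_n \to 1\cdot \ln 1\cdot|\Omega|=0$, contradicting the assumed lower bound. The main technical obstacle is precisely this last passage to the limit: one must carefully use the Sobolev exponent $2^*$ together with $m \geq (d-2)_+/d$ to ensure enough integrability, as weaker assumptions on $m$ would not allow the compactness argument to close. The cases $d=1,2$ are easier, as then $H^1(\Omega)$ embeds into every $L^p(\Omega)$ and the constraint $m\geq (d-2)_+/d = 0$ is trivially fulfilled.
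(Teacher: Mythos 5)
Your treatment of the second inequality is exactly the paper's: the pointwise bound $x\ln(x/y)-x+y\ge(\sqrt{x}-\sqrt{y})^2$ applied with $y=\overline{u}$, followed by the observation that the constant $\overline{\sqrt{u}}$ minimises the $L^2$-distance of $\sqrt{u}$ to constants; that part is fine. For the first inequality, however, the paper gives no proof at all (it is quoted from \cite{MM17}), and your attempted self-contained proof has a genuine gap. The scaling reduction ($u=\overline{u}\,w$, both sides scaling like $\overline{u}^{\,m}$) and the substitution $v=w^{m/2}$ are correct, but the contradiction set-up is not the negation of the reduced inequality. Since the constraint $\int_\Omega v^{2/m}dx=|\Omega|$ removes all scaling freedom, failure of $\|\nabla v\|_{L^2}^2\ge\widetilde{C}\int_\Omega v^{2/m}\ln v\,dx$ for every constant only yields a sequence with $\int_\Omega|\nabla v_n|^2dx<\tfrac1n\int_\Omega v_n^{2/m}\ln v_n\,dx$; it does not yield $\|\nabla v_n\|_{L^2}\to0$ together with $\int_\Omega v_n^{2/m}\ln v_n\ge c_0>0$, and there is no normalisation that produces such a sequence. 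Your compactness argument therefore only rules out the intermediate regime where the entropy term stays bounded away from $0$ and $\infty$. The regime where both sides diverge is untouched, and so is the regime where both sides tend to zero; in the latter, showing that the limit entropy is $0$ gives no contradiction, because without the lower bound $c_0$ one needs a quantitative (linearised, Poincar\'e-type) estimate comparing the two vanishing quantities, not a soft limit.

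These missing regimes are not vacuous. At the endpoint $m=(d-2)_+/d$ with $d\ge3$, which your hypothesis $m\ge(d-2)_+/d$ includes, one has $2/m=2^*$ and concentrating bubbles $v_\lambda(x)=\lambda^{(d-2)/2}\phi(\lambda(x-x_0))$ keep $\|\nabla v_\lambda\|_{L^2}$ and the constraint $\int_\Omega v_\lambda^{2^*}dx$ fixed while $\int_\Omega v_\lambda^{2^*}\ln v_\lambda\,dx=\tfrac{d-2}{2}\ln\lambda+O(1)\to\infty$; so in the borderline case the large-entropy regime is precisely where the inequality breaks, and no soft compactness argument can close there. Consistently, your uniform-integrability step (dominating $w_n\ln w_n$ by a bound on $w_n$ in $L^{m2^*/2}$) requires $m\,2^*/2>1$, i.e.\ the strict inequality $m>(d-2)_+/d$, which conflicts with the stated hypothesis. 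Even granting strictness, a complete proof must still dispose of the large-entropy regime (for instance via a defective estimate of the form $\int_\Omega v^{2/m}\ln v\,dx\le\varepsilon\|\nabla v\|_{L^2}^2+C_\varepsilon$ under the constraint) and of the small-entropy regime near the constant state; as written, the proposal establishes the elementary second estimate but not the generalised Logarithmic Sobolev Inequality itself, for which the paper relies on \cite{MM17}.
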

	\begin{proof}
The first inequality follows from \cite{MM17}. The second estimate follows from an elementary inequality:
		\[
		\int_\Omega u\ln\frac{u}{\overline{u}}dx
		=
		\int_\Omega (u\ln\frac{u}{\overline{u}}-u+\overline{u})dx
		\geq
		\int_\Omega(\sqrt{u}-\sqrt{\overline{u}})^2dx.
		\]
	\end{proof}
	
The estimates in Lemma \ref{GenLogSob} constitute a  generalisation of the Logarithmic Sobolev Inequality \eqref{LSI}, which is recovered by setting $m = 1$ and for which the pre-factor $\overline{u}^{m - 1}$ vanishes. 
In the case of porous media diffusion $m > 1$, the pre-factor $\overline{u}^{m - 1}$ 
causes the lower bounds in Lemma \ref{GenLogSob} to degenerate for small  
spatial averages $\overline{u}$. In particular, we have by Lemma \ref{GenLogSob} the following lower bound for the entropy production 
	\begin{align}\label{lower-D}
	D[a,b]
	&\geq 
	\sum_{i=1}^M d_iC(\Omega,m_i)\overline{a_i}^{\,m_i-1}\int_\Omega a_i\ln\frac{a_i}{\overline{a_i}}dx
	+ \sum_{j=1}^N h_jC(\Omega,p_j)\overline{b_j}^{\,p_j-1}\int_\Omega b_j\ln\frac{b_j}{\overline{b_j}}dx
	+\int_\Omega (a^\alpha-b^\beta)\ln\frac{a^\alpha}{b^\beta}dx
	\\
	&\geq 
	C_0\left[
	\sum_{i=1}^M \overline{a_i}^{\,m_i-1}\int_\Omega a_i\ln\frac{a_i}{\overline{a_i}}dx
	+ \sum_{j=1}^N \overline{b_j}^{\,p_j-1}\int_\Omega b_j\ln\frac{b_j}{\overline{b_j}}dx
	+\int_\Omega (a^\alpha-b^\beta)\ln\frac{a^\alpha}{b^\beta}dx
	\right]\nonumber
	\end{align}

%
	
The problem of degeneracy appears when some averages $\overline{a_i}$ or $\overline{b_j}$ do not satisfy a positive lower bound. 
To overcome this problem, we first observe that due to the mass conservation laws \eqref{conservation-laws} not all spatial averages can be small at the same time. 
If, for instance, a particular $\overline{a}_i$ is sufficiently small (w.r.t. $M_{ij}$) then another $\overline{b}_j$
can't be arbitrarily small because of a mass conservation law \eqref{conservation-laws} connecting these two species, i.e. 
\begin{equation}\label{mass-conv}
\beta_j\overline{a_i}+\alpha_i\overline{b_j}=M_{ij}>0,
\end{equation}

The following crucial Lemma \ref{IndDiff} shows functional inequalities, which quantity the so-called "indirect diffusion effect"
and allows to compensate the lacking lower bounds for the species, whose spatial averages do not satisfy a lower bound. 

We first introduce some convenient notations: 
	\begin{align*}
	&A_i=\sqrt{a_i},\ A_{i\infty}=\sqrt{a_{i\infty}},
	&B_j=\sqrt{b_j},\ B_{j\infty}=\sqrt{b_{j\infty}},\qquad\\ 
	&\delta_i(x)=A_i(x)-\overline{A_i}, \quad \forall x\in\Omega, &\eta_j(x)=B_j(x)-\overline{B_j},\quad \forall x\in\Omega,
	\end{align*}
	where
	\[
	\overline{A_i}=\int_\Omega A_idx \quad \text{and} \quad 
	\overline{B_j}=\int_\Omega B_jdx.
	\]
	Moreover,
	\[
	A^\alpha=\prod^M_{i=1}A_i^{\alpha_i}\quad \text{and}\quad 
	B^\beta=\prod^N_{j=1}B_j^{\beta_j}.
	\]
	The conservation laws are now rewritten as
	\begin{equation}\label{lawsAB}
		\beta_j \overline{A_i^2} + \alpha_i\overline{B_j^2} = M_{ij} > 0 \qquad \forall i=1\ldots M, j=1\ldots N.
	\end{equation}
	
	\begin{lemma}[``Indirect diffusion transfer" functional inequality]\label{IndDiff}\hfill\\
		Let $A_i, B_j:\Omega \to \mathbb R_+$ with $i=1\ldots M$ and $j=1\ldots N$ be nonnegative functions satisfying the conservation laws \eqref{lawsAB} and $\varepsilon>0$	be a constant to be determined later. 
		Assume that for some $J\in\{1,\ldots,N\}$,
		\[
		\overline{B_j^2}\leq\varepsilon \quad \text{ for all } \quad j=1\ldots J.
		\]
		Then, there exists a constant $K_1$ which depends on $\varepsilon$ such that:
		\begin{equation}\label{star}
		\sum_{i=1}^M\|\delta_i\|^2
		+\sum_{j=J+1}^N\|\eta_j\|^2
		+\|A^\alpha-B^\beta\|^2\geq K_1\sum_{j=1}^J\|\eta_j\|^2
		\end{equation}
	\end{lemma}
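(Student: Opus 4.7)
The plan is to combine a dichotomy driven by the conservation laws with a compactness--contradiction argument in the ``balanced'' regime. From \eqref{lawsAB}, picking any $j_0\in\{1,\ldots,J\}$ gives $\overline{A_i^2}\geq(M_{ij_0}-\alpha_i\varepsilon)/\beta_{j_0}\geq c_\varepsilon>0$ for every $i$, provided $\varepsilon$ is chosen small enough relative to the masses $M_{ij_0}$. Using the identity $\overline{A_i^2}=\overline{A_i}^2+\|\delta_i\|^2$ (after normalising $|\Omega|=1$ without loss of generality), I would fix a threshold $\mu_0>0$ with $\mu_0^2<c_\varepsilon/2$ and distinguish two cases. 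In the ``bad'' case where some $\overline{A_i}\leq\mu_0$, one immediately obtains $\|\delta_i\|^2\geq c_\varepsilon/2$; since the trivial a priori bound $\sum_{j=1}^J\|\eta_j\|^2\leq\sum_{j=1}^J\overline{B_j^2}\leq J\varepsilon$ always holds, \eqref{star} then follows for any $K_1\leq c_\varepsilon/(2J\varepsilon)$.

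In the ``good'' case $\overline{A_i}\geq\mu_0$ for every $i$ the argument is by contradiction. Assuming \eqref{star} fails, there exists a sequence of admissible configurations $(A_i^{(n)},B_j^{(n)})$ along which the ratio of the left-hand side of \eqref{star} to $\sum_{j=1}^J\|\eta_j^{(n)}\|^2$ tends to zero; since the denominator is uniformly bounded by $J\varepsilon$, this forces $\|\delta_i^{(n)}\|\to 0$, $\|\eta_j^{(n)}\|\to 0$ for $j>J$, and $\|A^{\alpha,(n)}-B^{\beta,(n)}\|\to 0$. The conservation laws provide uniform $L^2$-bounds on $A_i^{(n)}$ and $B_j^{(n)}$, so that up to a further subsequence $A_i^{(n)}\to a_i^*\geq\mu_0$ and $B_j^{(n)}\to b_j^*\geq 0$ ($j>J$) strongly in $L^2$ and a.e.\ to constants.

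Passing to the limit in the nonlinear products is done via the continuous mapping theorem for convergence in measure: $A^{\alpha,(n)}\to(a^*)^\alpha\geq\mu_0^{\sum_i\alpha_i}>0$ in measure, and combined with $\|A^{\alpha,(n)}-B^{\beta,(n)}\|\to 0$ this gives $B^{\beta,(n)}\to(a^*)^\alpha$ in measure. A short sub-case analysis according to whether $\prod_{j>J}(b_j^*)^{\beta_j}$ is positive or zero (the latter yielding a direct contradiction via H\"older and the smallness $\|(B_j^{(n)})^{\beta_j}\|_{L^{2/\beta_j}}\leq\varepsilon^{\beta_j/2}$ for $j\leq J$, which forces $B^{\beta,(n)}\to 0$ in some $L^r$) then shows $\prod_{j=1}^J(B_j^{(n)})^{\beta_j}\geq C^*/2>0$ on a subset of $\Omega$ of measure tending to $|\Omega|$. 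Applying the weighted AM--GM inequality $\prod_{j=1}^J(B_j^{(n)})^{\beta_j}\leq\bigl(\sum_{j=1}^J\beta_j B_j^{(n)}/W_J\bigr)^{W_J}$ with $W_J=\sum_{j=1}^J\beta_j$ pointwise on this set gives $\sum_{j=1}^J\beta_j B_j^{(n)}\geq W_J(C^*/2)^{1/W_J}$ there. Integration produces a strictly positive lower bound for $\sum_{j=1}^J\beta_j\overline{B_j^{(n)}}$ independent of $n$; but Cauchy--Schwarz together with $\overline{(B_j^{(n)})^2}\leq\varepsilon$ gives $\overline{B_j^{(n)}}\leq\sqrt\varepsilon$, hence $\sum_{j=1}^J\beta_j\overline{B_j^{(n)}}\leq(\max_j\beta_j)J\sqrt\varepsilon$, which is the desired contradiction for $\varepsilon$ sufficiently small in terms of $\mu_0$, the $M_{ij}$'s and the stoichiometric coefficients.

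The main obstacle is passing to the limit in the nonlinear products $A^{\alpha,(n)}$ and $B^{\beta,(n)}$ without any $L^\infty$ control on the components, only the natural $L^2$-bounds of the conservation laws being available. The convergence-in-measure framework sidesteps the need for uniform integrability of the high powers $(A_i^{(n)})^{\alpha_i}$, while the weighted AM--GM step is precisely the mechanism that converts pointwise lower-bound information on the product $\prod_{j=1}^J(B_j^{(n)})^{\beta_j}$ into an $L^1$-scale bound on the individual $B_j^{(n)}$, which is exactly the scale that the hypothesis $\overline{B_j^2}\leq\varepsilon$ controls via Cauchy--Schwarz.
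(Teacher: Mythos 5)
Your argument is correct in substance, but it takes a genuinely different route from the paper in the decisive case. Your ``bad'' case (some $\overline{A_i}\le\mu_0$ forces $\|\delta_i\|^2\ge c_\varepsilon/2$, to be compared with the trivial bound $\sum_{j\le J}\|\eta_j\|^2\le J\varepsilon$) plays the role of the paper's Case 1, which instead splits on whether some $\|\delta_i\|^2$, or some $\|\eta_j\|^2$ with $j>J$, exceeds $\varepsilon$. The real divergence is in the core case: you argue by compactness--contradiction, extracting a violating sequence along which the left-hand side of \eqref{star} tends to zero, passing to constant limits in $L^2$ and in measure, and then playing weighted AM--GM off against Cauchy--Schwarz and $\overline{B_j^2}\le\varepsilon$. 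The paper instead argues quantitatively on a single configuration: it restricts to a set $G$ with $|G|\ge\tfrac12$ on which all deviations satisfy $|\delta_i|,|\eta_j|\le\lambda\sqrt\varepsilon$, Taylor-expands $A^\alpha$ and $B^\beta$ around the spatial averages with remainders bounded on $G$, and obtains $\|A^\alpha-B^\beta\|^2\ge\tfrac14|G|\bigl(\prod_i\overline{A_i}^{\alpha_i}-\prod_j\overline{B_j}^{\beta_j}\bigr)^2-C\varepsilon$, whence for small $\varepsilon$ the explicit constant $K_1=\frac{1}{16M_0^2J}\prod_i(M_{i1}/\beta_1)^{\alpha_i}$. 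What the paper's route buys is exactly this explicitness: $K_1$ enters Lemma \ref{variant-eede} and ultimately Theorem \ref{T1.3}, whose statement promises explicitly computable constants and rates; your compactness argument proves the lemma as literally stated, but the resulting $K_1$ is non-constructive, which would weaken that downstream claim. Conversely, your route avoids the Taylor-expansion bookkeeping, and your in-measure handling of the products (needing only the $L^2$ bounds from \eqref{lawsAB}) is a clean way around the absence of $L^\infty$ control.

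Two points in your write-up should be tightened. First, for the final contradiction you need the constant $C^*=(a^*)^\alpha/P^*$, with $P^*=\prod_{j>J}(b_j^*)^{\beta_j}$, to be bounded below \emph{uniformly over all possible violating sequences}, since the smallness condition on $\varepsilon$ must be fixed before any sequence is chosen. This does hold --- $(a^*)^\alpha\ge\mu_0^{\sum_i\alpha_i}$ and $P^*\le M_0^{\sum_{j>J}\beta_j}$ because $b_j^*=\lim\overline{B_j^{(n)}}\le M_0$ by \eqref{lawsAB} --- but it has to be stated, otherwise ``for $\varepsilon$ sufficiently small in terms of $\mu_0$ and the masses'' is not justified. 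Second, $\mu_0$ is chosen through $c_\varepsilon$, which depends on $\varepsilon$, while the final smallness condition on $\varepsilon$ depends on $\mu_0$; this circularity is harmless but should be resolved explicitly, e.g.\ first impose $\varepsilon\le\min_i M_{ij_0}/(2\alpha_i)$ so that $c_\varepsilon$ is bounded below by a mass-only constant, then fix $\mu_0$, then shrink $\varepsilon$ further as required.
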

	\begin{remark}
		Note that when the last term on the left hand side $\|A^\alpha - B^\beta\|^2$ diverges, the inequality holds trivially. Therefore, in the proof we only consider the case when it is finite.
	\end{remark}
	\begin{proof}
		Due to the mass conservation laws \eqref{lawsAB}, we have the following natural bounds,
		\[
		\overline{A_i^2},\overline{B_j^2}\leq M_0^2,\qquad \forall i=1,\ldots,M,\ \forall j=1,\ldots,N
		\]	
		for some constant $M_0>0$. Therefore, by Jensen's inequality, recalling that $|\Omega| = 1$,
		\[
		\overline{A_i}\leq\sqrt{\overline{A_i^2}}\leq M_0,\qquad 
		\overline{B_j}\leq\sqrt{\overline{B_j^2}}\leq M_0,\quad \forall i,j.
		\]
		From these bounds we get an upper bound for the right hand side of \eqref{star}
		\[
		\sum_{j=1}^J\|\eta_j\|^2=
		\sum_{j=1}^J(\overline{B_j^2}-\overline{B_j}^2)
		\leq 
		\sum_{j=1}^J\overline{B_j^2}\leq M_0^2J.
		\]
		We consider the following two cases.
		
		{\bf Case 1}:  If there exists $i\in \{1,\ldots,M\}$ such that  $\|\delta_i\|^2\geq\varepsilon$ or there exists a $j\in \{J+1,\ldots,N\}$ such that $\|\eta_j\|^2\geq\varepsilon$, we have:
		\[
		\sum_{i=1}^M\|\delta_i\|^2
		+\sum_{j=J+1}^N\|\eta_j\|^2
		+\|A^\alpha-B^\beta\|^2
		\geq\varepsilon\geq \frac{\varepsilon}{M_0^2J}\sum_{j=1}^J\|\eta_j\|^2
		\] 
		hence, the desired inequality \eqref{star} holds with $K_1=\frac{\varepsilon}{M_0^2J}$.
		
		{\bf Case 2}: Assume $\|\delta_i\|^2\leq\varepsilon$ for all $i\in \{1,\ldots,M\}$ and  $\|\eta_j\|^2\leq\varepsilon$ for all $j\in \{J+1,\ldots,N\}$, which together with the above assumption $\overline{B_j^2}\leq\varepsilon$ and $\overline{\eta_j^2} \leq \overline{B_j^2}$ for all $j=1\ldots J$ implies $\|\eta_j\|^2\leq\varepsilon$ for all  $j\in \{1,\ldots,N\}$., 
		
		Let $\lambda>0$ and denote by
		\[
		\Omega_{iA}=\{
		x\in\Omega:|\delta_i(x)|\leq\lambda\sqrt{\varepsilon}
		\}\quad \text{for}\ i=1,\dots,M.
		\]
		Then 
		\[
		\varepsilon\geq\int_\Omega |\delta_i(x)|^2dx\geq
		\int_{\Omega\backslash\Omega_{iA}} |\delta_i(x)|^2dx\geq
		\lambda^2\varepsilon|\Omega\backslash\Omega_{iA}|
		\]
		thus 
		\[
		|\Omega\backslash\Omega_{iA}|\leq\frac{1}{\lambda^2} \quad \text{ which implies} \quad
		|\Omega_{iA}|\geq1-\frac{1}{\lambda^2}
		\]
		Similarly we get,
		\[
		|\Omega_{jB}|\geq1-\frac{1}{\lambda^2} \quad \text{ where } \Omega_{jB} = \{x\in\Omega:|\eta_j(x)| \leq \lambda\sqrt{\varepsilon}\} \quad\forall j=1,\ldots,N.
		\]
		Now choose $\lambda^2=2(M+N)$ and consider $G=\cap_{i=1}^M\Omega_{iA}\cap^N_{j=1}\Omega_{jB}$. Then,  we have $|G|\geq\frac12$.
		Note that $|\delta_i(x)|\leq\lambda\sqrt{\varepsilon}$ and $|\eta_j(x)|\leq\lambda\sqrt{\varepsilon}$ for all $x\in G$ and for all $i,j$. Moreover, $\forall x\in G$
		\[
		A_i(x)=\overline{A_i}+\delta_i(x)
		\leq \overline{A_i}+|\delta_i(x)|\leq
		M_0+\lambda\sqrt{\varepsilon}\leq2M_0
		\]
		and similarly $B_j(x)\leq2M_0,\ \forall i,j$ if we choose $\varepsilon$ such that 
		\[
		\lambda\sqrt{\varepsilon}\leq M_0.
		\]
		By Taylor's expansion, we have
		\begin{align*}
		A^\alpha =\prod^M_{i=1}A_i^{\alpha_i}	=\prod^M_{i=1}(\overline{A_i}+\delta_i)^{\alpha_i}	
		=\prod^M_{i=1}\overline{A_i}^{\alpha_i}	+R(\overline{A_i},\delta_i)\sum^M_{i=1}\delta_i
		\end{align*}
		where the remainder terms $R$ depends polynomially on $\overline{A_i}$ and $\delta_i$. Note that $|R(\overline{A}_i, \delta_i)| \leq C_0(M_0)$ on $G$, we estimate with $(x-y)^2\geq\frac12x^2-y^2$
		\begin{align*}
		\|A^\alpha-B^\beta\|^2&=\int_\Omega \left(
		\prod_{i=1}^MA_i^{\alpha_i}-B^\beta
		\right)^2dx	
		\\ &\geq
		\int_G \left(
		\prod_{i=1}^M\overline{A_i}^{\alpha_i}
		-B^\beta
		+R(\overline{A_i},\delta_i)\sum_{i=1}^M\delta_i
		\right)^2dx	
		\\ &\geq\frac12
		\int_G \left(
		\prod_{i=1}^M\overline{A_i}^{\alpha_i}
		-B^\beta
		\right)^2dx	
		-\int_G |R(\overline{A_i},\delta_i)|^2|\sum_{i=1}^M\delta_i|^2
		\\ &\geq
		\frac12\int_G \left(
		\prod_{i=1}^M\overline{A_i}^{\alpha_i}-B^\beta
		\right)^2dx	
		-C_0(M_0)^2M\int_G\sum_{i=1}^M|\delta_i|^2
		\\ &\geq
		\frac12\int_G \left(
		\prod_{i=1}^M\overline{A_i}^{\alpha_i}-B^\beta
		\right)^2dx	
		-C_0(M_0)^2M\int_G\sum_{i=1}^M\|\delta_i\|^2
		\\ &\geq
		\frac12\int_G \left(
		\prod_{i=1}^M\overline{A_i}^{\alpha_i}-B^\beta
		\right)^2dx	
		-C_0(M_0)^2M^2\varepsilon
		\end{align*}
		where we used $\|\delta_i\|^2 \leq \varepsilon$ in the last inequality.
		
		In order to estimate further, we use again Taylor's expansion
		\[
		B^\beta=
		\prod_{j=1}^N(\overline{B_j}+\eta_j)^{\beta_j}
		=
		\prod_{j=1}^N\overline{B_j}^{\beta_j}
		+Q(\overline{B_j},\eta_j)\sum_{j=1}^N\eta_j
		\]
		where again, $Q$ depends polynomially on $\overline{B_j},\eta_j$, which implies $|Q(\overline{B}_j, \eta_j)| \leq C_1(M_0)$ on $G$. Therefore,
		\begin{align*}
		\int_G\left(
		\prod_{i=1}^M\overline{A_i}^{\alpha_i}
		-B^\beta
		\right)^2	dx&=
		\int_G\left(
		\prod_{i=1}^M\overline{A_i}^{\alpha_i}
		-\prod_{j=1}^N\overline{B_j}^{\beta_j}
		-Q(\overline{B_j},\eta_j)\sum_{j=1}^N\eta_j
		\right)^2	dx
		\\&\geq
		\frac12\int_G\left(
		\prod_{i=1}^M\overline{A_i}^{\alpha_i}
		-\prod_{j=1}^N\overline{B_j}^{\beta_j}\right)^2	dx
		-\int_G|Q(\overline{B_j},\eta_j)|^2|\sum_{j=1}^N\eta_j
		|^2	dx
		\\&\geq
		\frac12\int_G\left(
		\prod_{i=1}^M\overline{A_i}^{\alpha_i}
		-\prod_{j=1}^N\overline{B_j}^{\beta_j}\right)^2	dx
		-C_1(M_0)^2N^2\varepsilon
		\end{align*}
		where we used that  $\|\eta_j\|^2\leq\varepsilon$ for all $j=1,\ldots, N$.
		
		Combining these two estimates, we arrive at
		\begin{equation}\label{twostar}
		\|A^\alpha-B^\beta\|^2\geq\frac14|G|\left(
		\prod_{i=1}^M\overline{A_i}^{\alpha_i}
		-\prod_{j=1}^N\overline{B_j}^{\beta_j}\right)^2
		-\varepsilon\left(\frac12C_1(M_0)^2N^2+C_0(M_0)^2M^2\right).
		\end{equation}
		By Jensen's inequality and the assumption of the Lemma, we have
		\[
		\overline{B_j}\leq\sqrt{\overline{B_j^2}}\leq\sqrt{\varepsilon}
		,\quad \forall j=1,\ldots,J.
		\]
		On the other hand $\overline{B_j}\leq\sqrt{\overline{B_j^2}}\leq M_0
		,\ \forall j=J+1,\ldots,N$. Thus, the conservation law  \eqref{lawsAB} and $\|\delta_i\|^2 \leq \varepsilon$ yield
		\begin{equation*}
		\overline{A}_i = \sqrt{\overline{A_i^2} - \|\delta_i\|^2} = \sqrt{\frac{1}{\beta_1}(M_{i1}-\alpha_i\overline{B_1^2}) - \|\delta_i\|^2} \geq \sqrt{\frac{M_{i1}}{\beta_1} - \frac{\alpha_i}{\beta_1}\varepsilon - \varepsilon} \quad \forall i=1,\ldots, M.
		\end{equation*}
		Hence, by using $|G| \geq \frac 12$ we get from \eqref{twostar} that
		\begin{equation*}
		\|A^\alpha - B^\beta\|^2 \geq \frac 18\left[\prod_{i=1}^M\left(\frac{M_{i1}}{\beta_1} - \frac{\alpha_i}{\beta_1}\varepsilon - \varepsilon\right)^{\alpha_i/2} - \prod_{j=1}^{J}(\sqrt \varepsilon)^{\beta_j}\prod_{j=J+1}^{N}M_0^{\beta_j} \right]^2 - C_2\varepsilon.
		\end{equation*}
		Because the right hand side of the above inequality converges to $\frac 18\prod_{i=1}^M\bigl(\frac{M_{i1}}{\beta_1}\bigr)^{\alpha_i}$ as $\varepsilon\to 0$, we can choose $\varepsilon>0$ small enough, but still explicit, such that 
		\begin{equation*}
		\|A^\alpha - B^\beta\|^2 \geq \frac{1}{16}\prod_{i=1}^M\Bigl(\frac{M_{i1}}{\beta_1}\Bigr)^{\alpha_i} \geq \frac{1}{16M_0^2J}\prod_{i=1}^{M}\Bigl(\frac{M_{i1}}{\beta_1}\Bigr)^{\alpha_i}\sum_{j=1}^{J}\|\eta_j\|^2,
		\end{equation*}
		which implies the desired inequality \eqref{star} with the constant
		\begin{equation*}
		K_1 = \frac{1}{16M_0^2J}\prod_{i=1}^M\Bigl(\frac{M_{i1}}{\beta_1}\Bigr)^{\alpha_i}.
		\end{equation*}
	\end{proof}
	
	\begin{lemma}[An time-dependent entropy-entropy production estimate]\label{variant-eede}\hfill\\
		Let $(a,b) = (a_1,\ldots, a_M, b_1,\ldots, b_N)$ with $a_i, b_j: Q_T \to \mathbb R_+$ be nonnegative functions, which satisfy the conservation laws \eqref{conservation-laws}. Moreover,
		\begin{equation*}
		\|a_i\|_{L^{\infty}(Q_T)} \leq C_T \quad \text{ and } \quad \|b_j\|_{L^\infty(Q_T)} \leq C_T \quad \text{ for all } i, j.
		\end{equation*}
		Then, there exists a constant $K_2>0$ such that for all $T>0$,
		\begin{equation*}
		D[a(T),b(T)] \geq K_2 \frac{1}{1+\ln(1+T)} (E[a(T), b(T)] - E[\ainf,\binf]).
		\end{equation*}
	\end{lemma}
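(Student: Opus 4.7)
The plan is to exploit the additive decomposition $E[a,b] - E[\ainf,\binf] = I_1 + I_2$ recalled above, and to prove separately that $D$ controls $I_2$ uniformly in $T$ and controls $I_1$ up to the factor $(1+\ln(1+T))^{-1}$ originating from the $L^\infty$-bound of Theorem \ref{T1.2}; combining these two estimates immediately yields the claim. As a starting point, Lemma \ref{GenLogSob} together with the elementary reaction inequality $(x-y)\ln(x/y)\geq 4(\sqrt{x}-\sqrt{y})^2$ furnishes the two lower bounds
\[
D[a,b] \geq C_0\!\sum_{i=1}^M\! \overline{a_i}^{\,m_i-1}\!\!\int_\Omega a_i\ln\frac{a_i}{\overline{a_i}}dx + C_0\!\sum_{j=1}^N\! \overline{b_j}^{\,p_j-1}\!\!\int_\Omega b_j\ln\frac{b_j}{\overline{b_j}}dx + 4\|A^\alpha - B^\beta\|^2
\]
and
\[
D[a,b] \geq C_0\!\sum_{i=1}^M\! \overline{a_i}^{\,m_i-1}\|\delta_i\|^2 + C_0\!\sum_{j=1}^N\! \overline{b_j}^{\,p_j-1}\|\eta_j\|^2 + 4\|A^\alpha - B^\beta\|^2.
\]
Next I fix a threshold $\varepsilon_0>0$ small enough (depending on the $M_{ij}$, $\alpha_i$, $\beta_j$) so that the conservation laws \eqref{conservation-laws} produce the dichotomy: at any time, either (\textbf{Case 1}) all averages $\overline{a_i},\overline{b_j}$ exceed $\varepsilon_0$, or (\textbf{Case 2}) small averages occur only on one ``side'' of the reversible reaction, say $\overline{b_j}\leq \varepsilon_0$ for $j=1,\ldots,J$, while all other averages are bounded below by a strictly positive constant.

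In Case 1 the pre-factors $\overline{a_i}^{\,m_i-1},\overline{b_j}^{\,p_j-1}$ are uniformly bounded below, so the first display gives at once $D \geq C I_1$, while the second display, combined with the purely algebraic entropy-entropy production estimate recalled in Section \ref{sec:add} (of the form $\sum_i\|\delta_i\|^2+\sum_j\|\eta_j\|^2+\|A^\alpha-B^\beta\|^2\geq K I_2$), yields $D\geq C I_2$; both together give the stronger uniform bound $D\geq C(E-E_\infty)$. In Case 2, the degenerate pre-factors on $j\leq J$ are compensated by Lemma \ref{IndDiff}, whose right-hand side transfers the missing control to $\sum_{j\leq J}\|\eta_j\|^2$; combining with the algebraic EEP of Section \ref{sec:add} again yields $D\geq C I_2$. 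For $I_1$, species with bounded-below averages are still controlled directly by the generalised LSI; for $j\leq J$, the $L^\infty$-bound $b_j\leq C_T$ from Theorem \ref{T1.2} yields the elementary estimate
\[
\int_\Omega b_j\ln\frac{b_j}{\overline{b_j}}dx \leq \overline{b_j}\ln\frac{C_T}{\overline{b_j}} \leq \overline{b_j}\ln C_T + e^{-1} \leq C\bigl(1+\ln(1+T)\bigr),
\]
using $x\ln(1/x)\leq e^{-1}$ on $[0,1]$ and the polynomial-in-$T$ growth of $C_T$. The crucial observation is that in Case 2, since $\overline{b_j}\leq\varepsilon_0$ and $b_{j\infty}>0$, the summand $\overline{b_j}\ln(\overline{b_j}/b_{j\infty}) - \overline{b_j} + b_{j\infty}$ contributing to $I_2$ is bounded below by $b_{j\infty}/2=:c_0>0$ (for $\varepsilon_0$ small enough), forcing an \emph{absolute} positive lower bound $D \geq C I_2 \geq \gamma>0$ in Case 2. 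This absolute lower bound is exactly what converts the absolute bound above into a relative bound $\int_\Omega b_j\ln(b_j/\overline{b_j})dx \leq (C/\gamma)(1+\ln(1+T))D$; together with the direct generalised-LSI estimate on the remaining species it gives $I_1\leq C'(1+\ln(1+T))D$. Combined with $D\geq C I_2$ this yields the desired $D\geq K_2 (1+\ln(1+T))^{-1}(I_1+I_2)$.

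The main obstacle is precisely Case 2, where the pre-factor $\overline{b_j}^{\,p_j-1}$ in the generalised LSI degenerates and where the classical strategy fails. The $L^\infty$-bound of Theorem \ref{T1.2}, which grows polynomially in $T$, enters here and only inside a logarithm; this is the origin of the $(1+\ln(1+T))^{-1}$ factor and the reason the entropy-entropy production estimate must be stated in a time-dependent form in the porous-medium setting.
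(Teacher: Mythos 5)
Your proposal is correct, and it assembles the same toolbox as the paper (the case distinction on spatial averages via the conservation laws, Lemma \ref{GenLogSob}, the indirect-diffusion Lemma \ref{IndDiff}, and the algebraic machinery of Section \ref{sec:add}), but the decisive step in the degenerate case is handled by a genuinely different mechanism. The paper does not split $E-E[\ainf,\binf]$ into $I_1+I_2$ in Case 2; instead it bounds the \emph{whole} relative entropy from above by writing it as $\sum\Phi(a_i,a_{i,\infty})(A_i-A_{i,\infty})^2+\ldots$ and using the monotonicity of $\Phi(\cdot,y)$ together with $\|a_i\|_{L^\infty(Q_T)},\|b_j\|_{L^\infty(Q_T)}\leq C_T$ to produce the factor $1+\ln(1+T)$ (see \eqref{upper-E}--\eqref{2star}), which is then compared with the quadratic lower bound \eqref{1star} on $D$ obtained from Lemma \ref{GenLogSob}, Lemma \ref{IndDiff} and \eqref{bao3}. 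You instead keep the additive decomposition, show $D\geq C\,I_2$ with a $T$-independent constant (via Lemma \ref{IndDiff} plus \eqref{bao2}--\eqref{bao3}, whose constants depend only on the mass bound $M_0$ and the positive equilibrium, not on $T$), and then exploit the observation --- absent from the paper --- that in Case 2 one summand of $I_2$ satisfies $\overline{b_j}\ln(\overline{b_j}/b_{j,\infty})-\overline{b_j}+b_{j,\infty}\geq b_{j,\infty}/2$ once $\varepsilon$ is chosen small enough, so that $D\geq\gamma>0$ is bounded below absolutely; this lets you upgrade the crude bound $\int_\Omega b_j\ln(b_j/\overline{b_j})\,dx\leq \overline{b_j}\ln C_T+e^{-1}\leq C(1+\ln(1+T))$ (valid with the normalisation $|\Omega|=1$ and $C_T\geq1$) into a bound relative to $D$. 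Both routes yield explicit constants; the paper's is slightly more compact because one estimate covers the whole relative entropy at once, while yours makes transparent \emph{where} the polynomially growing $L^\infty$-bound is really used --- only for the species whose averages degenerate, only inside a logarithm, and only in the regime where the state is necessarily far from equilibrium --- at the cost of one additional explicit smallness requirement on $\varepsilon$ relative to $\min_j b_{j,\infty}$, which is harmless since $\varepsilon$ is chosen freely anyway.
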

	\begin{proof}
		Let $\varepsilon>0$ be a small constant chosen in Lemma \ref{IndDiff}. We will consider two cases and for convenience we will drop $T$ in $a_i(T)$ and $b_j(T)$ when there is no confusion.
		
\noindent{\bf Case 1.} Assume $\overline{a}_i \geq \varepsilon$ for all $i=1,\ldots, M$ and $\overline{b}_j \geq \varepsilon$ for all $j=1,\ldots, N$. By applying Lemma \ref{GenLogSob}, we have
		\begin{equation*}
		\begin{aligned}
		D[a,b] &\geq \sum_{i=1}^{M}d_iC(\Omega,m_i)\varepsilon^{m_i-1}\int_{\Omega}a_i\ln{\frac{a_i}{\overline{a}_i}}dx + \sum_{j=1}^{N}h_jC(\Omega,p_j)\varepsilon^{p_j-1}\int_{\Omega}b_j\ln{\frac{b_j}{\overline{b}_j}}dx + \int_{\Omega}(a^\alpha - b^\beta)\ln\frac{a^\alpha}{b^\beta}dx\\
		&\geq K_3\left[\sum_{i=1}^{M}\int_{\Omega}a_i\ln{\frac{a_i}{\overline{a}_i}}dx + \sum_{j=1}^{N}\int_{\Omega}b_j\ln{\frac{b_j}{\overline{b}_j}}dx + \int_{\Omega}(a^\alpha - b^\beta)\ln\frac{a^\alpha}{b^\beta}dx\right]
		\end{aligned}
		\end{equation*}
		with
		\begin{equation*}
		K_3 = \min_{i=1\ldots M; j=1\ldots N }\{d_iC(\Omega, m_i)\varepsilon^{m_i-1}; h_jC(\Omega,p_j)\varepsilon^{p_j-1}; 1\}.
		\end{equation*}
		Using an entropy-entropy production inequality in case of system \eqref{R} with linear diffusion, see Lemma \ref{EnEnDiEs} below, we know that
		\begin{equation*}
		\sum_{i=1}^{M}\int_{\Omega}a_i\ln{\frac{a_i}{\overline{a}_i}}dx + \sum_{j=1}^{N}\int_{\Omega}b_j\ln{\frac{b_j}{\overline{b}_j}}dx + \int_{\Omega}(a^\alpha - b^\beta)\ln\frac{a^\alpha}{b^\beta}dx \geq K_4(E[a,b] - E[\ainf,\binf])
		\end{equation*}
		for an explicit constant $K_4>0$. Therefore,
		\begin{equation*}
		D[a,b] \geq K_3K_4(E[a,b] - E[\ainf,\binf]).
		\end{equation*}
		
		\medskip
\noindent{\bf Case 2.} Suppose either $\overline{a}_i \leq \varepsilon$ for some $i\in \{1,\ldots, M\}$ or $\overline{b}_j \leq \varepsilon$ for some $j=1,\ldots, N$.
		
		Due to the mass conservation laws $\beta_j \overline{a}_i + \alpha_i\overline{b}_j = M_{ij}$, it cannot happen that $\overline{a}_i \leq \varepsilon$ and $\overline{b}_j \leq \varepsilon$ simultaneously for a sufficiently small $\varepsilon$, e.g. $\varepsilon < \frac{M_{ij}}{2}\min\left\{\frac{1}{\beta_j};\frac{1}{\alpha_i} \right\}$. Therefore, without loss of generality, we can assume that 
		\begin{equation*}
		\overline{b}_j \leq \varepsilon\quad \forall j=1,\ldots, J \qquad \text{ and } \qquad \overline{b}_j \geq \varepsilon\quad \forall j=J+1,\ldots,N
		\end{equation*}
		for some $J \in \{1,\ldots, N\}$. Moreover, by mass conservation laws
		\begin{equation*}
		\overline{a}_i = \frac{1}{\beta_1}(M_{i1}-\alpha_i\overline{b}_1) \geq \frac{1}{\beta_1}(M_{i1} - \alpha_i\varepsilon), \qquad \text{for all } i=1,\ldots,M.
		\end{equation*}
		Thus, we can apply Lemma \ref{GenLogSob} to $D[a,b]$ and estimate
		\begin{equation*}
		\begin{aligned}
		D[a,b] &\geq \sum_{i=1}^{M}d_iC(\Omega,m_i)\left[\frac{1}{\beta_1}(M_{i1}-\alpha_i\varepsilon)\right]^{m_i-1}\int_{\Omega}a_i\ln\frac{a_i}{\overline{a}_i}dx\\
		&\qquad + \sum_{j=J+1}^{N}h_jC(\Omega,p_j)\varepsilon^{p_j-1}\int_{\Omega}b_j\ln\frac{b_j}{\overline{b}_j}dx + \int_{\Omega}(a^\alpha - b^\beta)\ln\frac{a^\alpha}{b^\beta}dx\\
		& \geq K_5\left[\sum_{i=1}^M\|\sqrt{a_i} - \overline{\sqrt{a_i}}\|^2 + \sum_{j=J+1}^N\|\sqrt{b_j} - \overline{\sqrt{b_j}}\|^2 + \|A^\alpha - B^\beta\|^2\right]\\
		&= K_5\left[\sum_{i=1}^M\|\delta_i\|^2 + \sum_{j=J+1}^N\|\eta_j\|^2 + \|A^\alpha - B^\beta\|^2\right],
		\end{aligned}
		\end{equation*}
		where we have used $(x-y)\ln(x/y) \geq 4(\sqrt x - \sqrt y)^2$ and 
		\begin{equation*}
		K_5 = \min_{i=1\ldots M; j=J+1\ldots N}\left\{d_iC(\Omega,m_i)\left[\frac{1}{\beta_1}(M_{i1}-\alpha_i\varepsilon)\right]^{m_i-1}; h_jC(\Omega,p_j)\varepsilon^{p_j-1}; 4 \right\}.
		\end{equation*}
		Applying Lemma \ref{IndDiff} yields
		\begin{equation*}
		D[a,b] \geq K_6\left[\sum_{i=1}^M\|\delta_i\|^2 + \sum_{j=1}^N\|\eta_j\|^2 + \|A^\alpha - B^\beta\|^2\right]
		\end{equation*}
		where
		\begin{equation*}
		K_6  = \frac 12 \min\{K_5; K_5K_1\}.
		\end{equation*}
		By using another functional inequality, which was already proven in the case of linear diffusion, see \eqref{bao3} in Section \ref{sec:add}, we have
		\begin{equation}\label{1star}
		D[a,b]\geq K_7\left[\sum_{i=1}^{M}(\|\delta_i\|^2 + |\sqrt{\overline{A_i^2}} - A_{i,\infty}|^2) + \sum_{j=1}^{N}(\|\eta_j\|^2 + |\sqrt{\overline{B_j^2}} - B_{j,\infty}|^2) \right].
		\end{equation}
		
		Now, we estimate $E[a,b] - E[\ainf,\binf]$ from above. Consider the two variables function
		\begin{equation*}
		\Phi(x,y) = \frac{x\ln(x/y) - x + y}{(\sqrt x- \sqrt y)^2}
		\end{equation*}
		which is continuous in $(0,\infty)^2$ and $\Phi(\cdot, y)$ is increasing for each fixed $y>0$. It holds that
		\begin{equation}\label{upper-E}
		\begin{aligned}
		&E[a,b] - E[\ainf,\binf]\\ &=\sum_{i=1}^{M}\int_{\Omega}\Phi(a_i,a_{i,\infty})(A_i - A_{i,\infty})^2dx + \sum_{j=1}^N\int_{\Omega}\Phi(b_j, b_{j,\infty})(B_j - B_{j,\infty})^2dx\\
		&\leq \max_{i=1\ldots M; j=1\ldots N}\{\Phi(\|a_i\|_{L^{\infty}(Q_T)}, a_{i,\infty}); \Phi(\|b_j\|_{L^{\infty}(Q_T)}, b_{j,\infty})\}\left[\sum_{i=1}^M\|A_i - A_{i,\infty}\|^2 + \sum_{j=1}^N\|B_j - B_{j,\infty}\|^2\right]\\
		&\leq K_8(1+\ln(1+T))\left[\sum_{i=1}^M(\|\delta_i\|^2 + |\overline{A}_i - A_{i,\infty}|^2) + \sum_{j=1}^{N}(\|\eta_j\|^2 + |\overline{B}_j - B_{j,\infty}|^2) \right],
		\end{aligned}
		\end{equation}
		where in the last inequality we have used the estimates $\|a_i\|_{L^{\infty}(Q_T)} \leq C_T$ and $\|b_j\|_{L^{\infty}(Q_T)} \leq C_T$ and that $C_T$ is a constant growing at most polynomially w.r.t. $T$. 
		
		Next, from $\|\delta_i\|^2 = \overline{A_i^2} - \overline{A}_i^2 = (\sqrt{\overline{A_i^2}} - \overline{A}_i)(\sqrt{\overline{A_i^2}} + \overline{A}_i)$, we have
		\begin{equation*}
		\overline{A}_i = \sqrt{\overline{A_i^2}} - \frac{\|\delta_i\|^2}{\sqrt{\overline{A_i^2}} + \overline{A}_i} = \sqrt{\overline{A_i^2}} - Q_i(A_i)\|\delta_i\| \quad \text{ with } \quad Q_i(A_i) = \frac{\|\delta_i\|}{\sqrt{\overline{A_i^2}} + \overline{A}_i}.
		\end{equation*}
		It's obvious that $Q(A_i) \geq 0$ and moreover
		\begin{equation*}
		Q_i(A_i)^2 = \frac{\overline{A_i^2} - \overline{A}_i^2}{(\sqrt{\overline{A_i^2}} + \overline{A}_i)^2} = \frac{\sqrt{\overline{A_i^2}} - \overline{A}_i}{\sqrt{\overline{A_i^2}} + \overline{A}_i} \leq 1.
		\end{equation*}
		Therefore,
		\begin{equation*}
		\begin{aligned}
		|\overline{A}_i - A_{i,\infty}|^2 &\leq 2\left(|\sqrt{\overline{A_i^2}} - \overline{A}_i|^2 + |\sqrt{\overline{A_i^2}} - A_{i,\infty}|^2\right)\\
		&= 2\left(Q_i(A_i)^2\|\delta_i\|^2 + |\sqrt{\overline{A_i^2}} - A_{i,\infty}|^2\right)\\
		&\leq 2\left(\|\delta_i\|^2 + |\sqrt{\overline{A_i^2}} - A_{i,\infty}|^2\right) \quad \text{ for all } i=1\ldots M
		\end{aligned}
		\end{equation*}
		and similarly 
		\begin{equation*}
		|\overline{B}_j - B_{j,\infty}|^2 \leq 2\left(\|\eta_i\|^2 + |\sqrt{\overline{B_j^2}} - B_{j,\infty}|^2\right) \quad \text{ for all } j=1\ldots N.
		\end{equation*}
		Hence it follows from \eqref{upper-E} that
		\begin{equation}\label{2star}
		E[a,b] - E[\ainf,\binf] \leq 3K_8(1+\ln(1+T))\left[\sum_{i=1}^{M}(\|\delta_i\|^2 + |\sqrt{\overline{A_i^2}} - A_{i,\infty}|^2) + \sum_{j=1}^{N}(\|\eta_j\|^2 + |\sqrt{\overline{B_j^2}} - B_{j,\infty}|^2)\right].
		\end{equation}
		A combination of \eqref{1star} and \eqref{2star} yields
		\begin{equation*}
		D[a,b] \geq \frac{K_7}{3K_8(1+\ln(1+T))}(E[a,b] - E[\ainf,\binf]).
		\end{equation*}
		
		Finally, from {\bf Case 1} and {\bf Case 2}, we can conclude the proof of Lemma \ref{variant-eede} with
		\begin{equation*}
		K_2 = \min\left\{K_3K_4; \frac{K_7}{3K_8}\right\}.
		\end{equation*}
	\end{proof}
\begin{remark}
The assumptions $\|a_i\|_{L^{\infty}(Q_T)} \leq C_T$ and $\|b_j\|_{L^{\infty}(Q_T)} \leq C_T$ in Lemma \ref{variant-eede} are only needed to estimate $E[a,b] - E[\ainf,\binf]$ above as in \eqref{upper-E}. 
In the case of linear diffusion, it is possible to avoid these $L^{\infty}$-bounds by 
using the additivity of the relative entropy (see also the proof of Lemma \ref{EnEnDiEs} in Section \ref{sec:add}), i.e. 
		\begin{equation*}
			E[a,b] - E[\ainf,\binf] = (E[a,b] - E[\overline{a},\overline{b}]) + (E[\overline{a},\overline{b}] - E[\ainf,\binf]).
		\end{equation*}
However, while for linear diffusion, the Logarithmic Sobolev Inequality 
controls to first part $E[a,b] - E[\overline{a},\overline{b}]\le C(C_{\mathrm{LSI}}) D[a,b]$, such an estimate is unclear in the case of porous media diffusion, 
where the generalised Logarithmic Sobolev Inequality in Lemma \ref{GenLogSob} degenerates for states without lower bounds on the spatial 
averages.
\end{remark}
	We need also the following Csisz\'ar-Kullback-Pinsker type inequality. The proof is standard and can be found in e.g. \cite{DFT16,FT17a}.
	\begin{lemma}\label{CKP-ineq}
		There exists a constant $C_{CKP}>0$ such that for any measurable nonnegative functions $a_i, b_j: \Omega \to \mathbb R_+$ satisfying the mass conservation \eqref{mass-conv}, there holds
		\begin{equation*}
			E[a,b] - E[\ainf,\binf] \geq C_{CKP}\left(\sum_{i=1}^M\|a_i - a_{i,\infty}\|_1^2 + \sum_{j=1}^N\|b_j - b_{j,\infty}\|_1^2\right).
		\end{equation*}
	\end{lemma}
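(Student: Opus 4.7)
The proof follows the standard Csisz\'ar--Kullback--Pinsker scheme for non-probability densities, as used e.g. in \cite{DFT16,FT17a}. The plan is to exploit the additivity of the relative entropy to split it into a \emph{spatial} part (measuring the oscillation of each concentration around its mean) and a \emph{finite-dimensional} part (measuring the distance of the averages from equilibrium), and then to control each piece by classical Pinsker-type inequalities. The key input is the mass conservation \eqref{mass-conv}, which, together with nonnegativity, yields a uniform upper bound $\overline{a_i},\overline{b_j}\le M_0$ for an explicit constant $M_0>0$ depending only on the $M_{ij}$.

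Concretely, I would write $E[a,b]-E[\ainf,\binf]=I_1+I_2$, where
\begin{equation*}
I_1=\sum_{i=1}^M\int_\Omega a_i\log\frac{a_i}{\overline{a_i}}dx+\sum_{j=1}^N\int_\Omega b_j\log\frac{b_j}{\overline{b_j}}dx
\end{equation*}
and
\begin{equation*}
I_2=\sum_{i=1}^M\Bigl(\overline{a_i}\log\frac{\overline{a_i}}{a_{i,\infty}}-\overline{a_i}+a_{i,\infty}\Bigr)+\sum_{j=1}^N\Bigl(\overline{b_j}\log\frac{\overline{b_j}}{b_{j,\infty}}-\overline{b_j}+b_{j,\infty}\Bigr),
\end{equation*}
both of which are nonnegative.

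For $I_1$, I would apply the classical Pinsker inequality species-by-species. Normalising $a_i/\overline{a_i}$ to a probability density on $\Omega$ (using $|\Omega|=1$) against the constant density $1$ yields
\begin{equation*}
\int_\Omega a_i\log\frac{a_i}{\overline{a_i}}dx\ge\frac{1}{2\overline{a_i}}\|a_i-\overline{a_i}\|_1^2\ge\frac{1}{2M_0}\|a_i-\overline{a_i}\|_1^2,
\end{equation*}
with the natural convention that both sides vanish when $\overline{a_i}=0$ (which forces $a_i\equiv0$). Summing the analogous estimate for each $b_j$ gives $I_1\ge(2M_0)^{-1}\bigl(\sum_i\|a_i-\overline{a_i}\|_1^2+\sum_j\|b_j-\overline{b_j}\|_1^2\bigr)$.

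For $I_2$, the standard scalar convexity inequality $x\log(x/y)-x+y\ge(\sqrt x-\sqrt y)^2$ applied to the pairs $(\overline{a_i},a_{i,\infty})$ and $(\overline{b_j},b_{j,\infty})$, combined with the elementary bound $(x-y)^2=(\sqrt x+\sqrt y)^2(\sqrt x-\sqrt y)^2\le 2(x+y)(\sqrt x-\sqrt y)^2\le 4M_1(\sqrt x-\sqrt y)^2$, where $M_1:=M_0+\max_{k,\ell}\{a_{k,\infty},b_{\ell,\infty}\}$, yields the analogous lower bound $I_2\ge(4M_1)^{-1}\bigl(\sum_i|\overline{a_i}-a_{i,\infty}|^2+\sum_j|\overline{b_j}-b_{j,\infty}|^2\bigr)$. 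Combining the two estimates via the triangle inequality $\|a_i-a_{i,\infty}\|_1^2\le 2\|a_i-\overline{a_i}\|_1^2+2|\overline{a_i}-a_{i,\infty}|^2$ (which holds since $|\Omega|=1$ makes the $L^1$-norms of the constants $\overline{a_i}$ and $a_{i,\infty}$ equal to their values) delivers the claim with an explicit $C_{CKP}>0$. There is no genuine analytical obstacle here; the only point worth watching is the degenerate case of vanishing spatial averages, which is harmlessly absorbed into both sides of the inequality since $\overline{a_i}=0$ forces $a_i\equiv 0$.
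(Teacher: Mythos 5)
Your argument is correct, and it is essentially the standard Csisz\'ar--Kullback--Pinsker proof that the paper itself omits, deferring instead to \cite{DFT16,FT17a}: additivity of the relative entropy, the classical Pinsker inequality for the oscillation part $I_1$ (using $|\Omega|=1$ and $\overline{a_i},\overline{b_j}\le M_0$ from \eqref{mass-conv}), and the elementary bound $x\ln(x/y)-x+y\ge(\sqrt x-\sqrt y)^2$ together with $(\sqrt x+\sqrt y)^2\le 2(x+y)$ for the finite-dimensional part $I_2$. No gaps; the degenerate case $\overline{a_i}=0$ is handled correctly.
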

	We are ready to prove Theorem \ref{T1.3}.
	\begin{proof}[Proof of Theorem \ref{T1.3}]
		Due to the condition
		\begin{equation*}
			m_i, p_j > \max\left\{\nu - \min\left\{\frac{4}{d+2}; 1\right\}; 1\right\} \qquad \forall i=1\ldots M, j=1\ldots N,
		\end{equation*}
		we can apply Theorem \ref{T1.2} to show boundedness of the weak solution $(a,b)$ to \eqref{R}, i.e.
		\begin{equation*}
			\|a_i\|_{L^{\infty}(Q_T)} \leq C_T, \quad \|b_j\|_{L^{\infty}(Q_T)} \leq C_T, \quad \forall i=1\ldots M, j=1\ldots N.
		\end{equation*}
		By applying Lemma \ref{variant-eede} this yields
		\begin{equation*}
			D[a(T), b(T)] \geq K_2\frac{1}{1+\ln(1+T)} (E[a(T), b(T)] - E[\ainf,\binf]).
		\end{equation*}
		Moreover, due to the boundedness of solutions, we have the entropy-entropy production relation
		\begin{equation*}
			\frac{d}{dt}(E[a,b] - E[\ainf,\binf]) = \frac{d}{dt}E[a,b] = -D[a,b]\leq -K_2\frac{1}{1+\ln(1+T)}(E[a,b] - E[\ainf,\binf]).
		\end{equation*}
		A classical Gronwall's inequality leads to 
		\begin{equation*}
			E[a(T), b(T)] - E[\ainf,\binf] \leq \exp\left(-K_2\int_0^T\frac{d\tau}{1+\ln(1+\tau)}\right)(E[a_0,b_0] - E[\ainf,\binf]).
		\end{equation*}
		By direct calculations
		\begin{equation*}
			\exp\left(-K_2\int_0^T\frac{d\tau}{1+\ln(1+\tau)}\right) \geq \exp\left(-K_2\int_0^T\frac{d\tau}{1+\tau}\right) = (1+T)^{-K_2}.
		\end{equation*}
		Hence,
		\begin{equation}\label{entropy-algebraic}
			E[a(T), b(T)] - E[\ainf,\binf] \leq (1+T)^{-K_2}(E[a_0,b_0] - E[\ainf,\binf]),
		\end{equation}
		and therefore thanks to the Csisz\'ar-Kullback-Pinsker inequality in Lemma \ref{CKP-ineq}
		\begin{equation}\label{algebraic}
			\sum_{i=1}^M\|a_i(T) - a_{i,\infty}\|_1^2 + \sum_{j=1}^N\|b_j(T) - b_{j,\infty}\|_1^2\leq C_{CKP}^{-1}(1+T)^{-K_2}(E[a_0,b_0] - E[\ainf,\binf])
		\end{equation}
		which implies algebraic convergence to equilibrium of solutions to \eqref{R}. 
		
		We will now show that from this it is possible to recover exponential convergence. Since the right hand side of \eqref{algebraic} tends to zero as $T\to \infty$, we can choose 
		\begin{equation}\label{T0}
			T_0 = \max\left\{1;  \left[\frac{C_{CKP}^{-1}(E[a_0,b_0]-E[\ainf,\binf])}{\frac 12\min_{i=1\ldots M;j=1\ldots N}\{a_{i,\infty}^2, b_{j,\infty}^2\}}\right]^{1/K_2} - 1\right\}
		\end{equation}
		which implies for all $t\geq T_0$
		\begin{equation*}
			\|a_{i}(t) - a_{i,\infty}\|_{1} \leq \frac 12 a_{i,\infty} \quad \text{ and } \quad \|b_j(t) - b_{j,\infty}\|_1 \leq \frac 12 b_{j,\infty},
		\end{equation*}
		and thus
		\begin{equation*}
			\overline{a}_i(t) = \|a_{i}(t)\|_1 \geq \frac 12 a_{i,\infty} \quad \text{ and } \quad \overline{b}_j(t) = \|b_j(t)\|_1 \geq \frac 12 b_{j,\infty} \quad \text{ for all } \quad t \geq T_0.
		\end{equation*}
		Therefore, for all $t\geq T_0$, we can apply these lower bounds on the spatial averages bounds and Lemma \ref{GenLogSob} to estimate the entropy-entropy production as follows
		\begin{equation*}
				D[a(t),b(t)] \geq C_1\left[
				\sum_{i=1}^M \int_\Omega a_i\ln\frac{a_i}{\overline{a_i}}dx
				+ \sum_{j=1}^N \int_\Omega b_j\ln\frac{b_j}{\overline{b_j}}dx
				+\int_\Omega (a^\alpha-b^\beta)\ln\frac{a^\alpha}{b^\beta}dx
				\right] \quad \text{ for all } t\geq T_0,
		\end{equation*}
		with
		\begin{equation*}
			C_1 = \min_{i=1\ldots M; j=1\ldots N}\left\{d_iC(\Omega,m_i)\left(\frac{1}{2}a_{i,\infty}\right)^{m_i-1}; h_jC(\Omega,p_j)\left(\frac 12 b_{j,\infty}\right)^{p_j-1}; 1 \right\}.
		\end{equation*}
		By applying again Lemma \ref{EnEnDiEs}, we obtain
		\begin{equation*}
			D[a(t),b(t)] \geq C_1\lambda (E[a(t),b(t)] - E[\ainf,\binf]) \quad \text{ for all } \quad t\geq T_0,
		\end{equation*}
		which in a combination with the classical Gronwall's inequality yields for all $t\geq T_0$,
		\begin{equation*}
			\begin{aligned}
			E[a(t),b(t)] - E[\ainf,\binf] &\leq e^{-\lambda C_1(t-T_0)}(E[a(T_0),b(T_0)] - E[\ainf,\binf])\\
			&\leq e^{-\lambda C_1 t}e^{\lambda C_1 T_0}(1+T_0)^{-K_2}(E[a_0, b_0] - E[\ainf,\binf])\\
			&\leq e^{-\lambda C_1 t}e^{\lambda C_1 T_0}(E[a_0, b_0] - E[\ainf,\binf])
			\end{aligned}
		\end{equation*}
		where we used \eqref{entropy-algebraic} for the second inequality.
		On the other hand, it follows from \eqref{entropy-algebraic} that for all $0\leq t< T_0$,
		\begin{equation*}
			\begin{aligned}
			E[a(t), b(t)] - E[\ainf,\binf] &\leq (1+t)^{-K_2}(E[a_0,b_0] - E[\ainf,\binf])\\
			&\leq e^{-\lambda C_1 t}e^{\lambda C_1T_0}(E[a_0,b_0] - E[\ainf,\binf])			
			\end{aligned}
		\end{equation*}
		Due to the explicitness of $T_0$ in \eqref{T0}, we eventually get the exponential convergence
		\begin{equation*}
			E[a(t),b(t)] - E[\ainf,\binf] \leq C_2e^{-\widehat\lambda t}(E[a_0,b_0] - E[\ainf,\binf]) \quad \text{ for all } \quad t\geq 0,
		\end{equation*}
		{with the constant $C_2=e^{\lambda C_1 T_0}$ and the rate $\widehat\lambda = \lambda C_1$}. Note that $C_2$ is explicit since $T_0$ is explicit (see \eqref{T0}). With another application of the Csisz\'ar-Kullback-Pinsker inequality in Lemma \ref{CKP-ineq}, this yields
		\begin{equation*}
			\sum_{i=1}^{M}\|a_i(t) - a_{i,\infty}\|_1^2 + \sum_{j=1}^N\|b_j(t) - b_{j,\infty}\|_1^2 \leq C_2C_{CKP}^{-1}e^{-\widehat{\lambda}t}(E[a_0,b_0] - E[\ainf,\binf]) \leq C_3e^{-\widehat{\lambda}t}
		\end{equation*}
		with $C_3 = C_2C_{CKP}^{-1}(E[a_0,b_0] - E[\ainf,\binf])$. Finally, by combining the above exponential $L^1$-convergence with the at most polynomial grow $L^{\infty}$ a-priori estimates $\|a_i\|_{L^{\infty}(Q_T)}, \|b_j\|_{L^{\infty}(Q_T)} \leq C_T$, interpolation yields for any $1<p<\infty$,
		\begin{equation*}
			\|a_i(T) - a_{i,\infty}\|_p \leq \|a_i(T) - a_{i,\infty}\|_{\infty}^{\theta}\|a_i(T) - a_{i,\infty}\|_1^{1-\theta}\le C_T^{\theta}C_3^{1-\theta}e^{-\widehat{\lambda}(1-\theta)T} \leq C_4e^{-\lambda_p T}
		\end{equation*}
		for some $0< \lambda_p < \widehat{\lambda}(1-\theta)$ since $C_T$ grows at most polynomially in $T$, and similarly
		\begin{equation*}
		\|b_j(T) - b_{j,\infty}\|_p \leq \|b_j(T) - b_{j,\infty}\|_{\infty}^{\theta}\|b_j(T) - b_{j,\infty}\|_1^{1-\theta}\le C_5e^{-\lambda_p T}.
		\end{equation*}
	This concludes the proof of Theorem \ref{T1.3}.
	\end{proof}
\section{Entropy-entropy production Inequality}\label{sec:add}
	\begin{lemma}[Entropy-entropy production estimate]\label{EnEnDiEs}
		Let $\ainf \in (0,\infty)^{M}$ and $\binf\in (0,\infty)^N$ satisfy
		\begin{equation*}
		\ainf^{\alpha} = \binf^{\beta}
		\end{equation*}
		where $\alpha\in [1,\infty)^M$ and $\beta\in [1,\infty)^N$.
		
		Then, there exists an explicit constant $\lambda>0$ depending on $\ainf$, $\binf$, $\alpha$, $\beta$ and the domain $\Omega$, such that for any nonnegative functions $a = (a_i): \Omega\to \mathbb R_+^M$ and $b = (b_j): \Omega \to \mathbb R_+^N$ satisfying
		\begin{equation*}
		\beta_j\overline{a}_i + \alpha_i\overline{b}_j = \beta_ja_{i,\infty} + \alpha_ib_{j,\infty} \qquad \text{ for all }\quad i=1,\ldots, M, \; j=1,\ldots, N,
		\end{equation*}
		the following entropy-entropy production inequality holds
		\begin{equation*}
		\widetilde{D}[a,b] \geq \lambda(E[a,b] - E[\ainf,\binf])
		\end{equation*}
		where
		\begin{equation*}
		\widetilde{D}[a,b] = \sum_{i=1}^{M}\int_{\Omega}a_i\ln\frac{a_i}{\overline{a}_i}dx 
		+ \sum_{j=1}^{N}\int_{\Omega}b_j\ln\frac{b_j}{\overline{b}_j}dx 
		+ \int_{\Omega}(a^\alpha - b^\beta)\ln\frac{a^\alpha}{b^\beta}dx
		\end{equation*}
		and
		\begin{equation*}
		E[a,b] = \sum_{i=1}^{M}\int_{\Omega}(a_i\ln a_i - a_i + 1)dx + \sum_{j=1}^{N}\int_{\Omega}(b_j\ln b_j - b_j + 1)dx.
		\end{equation*}
	\end{lemma}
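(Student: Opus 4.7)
The plan is to follow the standard entropy method strategy for reaction-diffusion systems with linear diffusion (as in \cite{FT17a,PSZ16}), exploiting the additivity of the relative entropy to split the problem into controlling spatial inhomogeneities and the distance of averages to equilibrium separately.

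\textbf{Step 1 (Additivity of relative entropy).} First, I would decompose
\begin{equation*}
E[a,b] - E[\ainf,\binf] = I_1 + I_2,
\end{equation*}
where
\begin{equation*}
I_1 = \sum_{i=1}^M \int_\Omega a_i \ln\frac{a_i}{\overline{a}_i}\,dx + \sum_{j=1}^N \int_\Omega b_j \ln\frac{b_j}{\overline{b}_j}\,dx
\end{equation*}
and $I_2$ collects the remaining purely average-dependent terms $\sum_i(\overline{a}_i\ln(\overline{a}_i/a_{i,\infty}) - \overline{a}_i + a_{i,\infty}) + \sum_j(\overline{b}_j\ln(\overline{b}_j/b_{j,\infty}) - \overline{b}_j + b_{j,\infty})$. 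The term $I_1$ is exactly the first two terms of $\widetilde{D}[a,b]$, so it is trivially dominated. The remaining work is therefore to bound $I_2$ by a multiple of $\widetilde{D}[a,b]$, using mainly the reaction term and a small portion of $I_1$.

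\textbf{Step 2 (From the reaction term to differences of averages).} Using the elementary inequality $(x-y)\ln(x/y)\geq 4(\sqrt{x}-\sqrt{y})^2$, the reaction term is bounded below by $4\|\sqrt{a^\alpha}-\sqrt{b^\beta}\|^2$. Writing $A_i=\sqrt{a_i}$, $B_j=\sqrt{b_j}$ and $\delta_i=A_i-\overline{A_i}$, $\eta_j=B_j-\overline{B_j}$, the $L^2$-norms $\|\delta_i\|^2,\|\eta_j\|^2$ are controlled by $I_1$ via the elementary bound $\int_\Omega u\ln(u/\overline{u})\,dx\geq \|\sqrt u-\overline{\sqrt u}\|^2$. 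A Taylor expansion of $\sqrt{a^\alpha}=\prod A_i^{\alpha_i}$ around $(\overline{A_i})$ and of $\sqrt{b^\beta}$ around $(\overline{B_j})$ — performed on a large-measure subset of $\Omega$ where all $\delta_i,\eta_j$ are small, as in the proof of Lemma \ref{IndDiff} — should yield a bound of the form
\begin{equation*}
\|\sqrt{a^\alpha}-\sqrt{b^\beta}\|^2 \;\geq\; \tfrac{1}{2}\bigl|\overline{A}^\alpha - \overline{B}^\beta\bigr|^2 - C\sum_{i=1}^M\|\delta_i\|^2 - C\sum_{j=1}^N\|\eta_j\|^2,
\end{equation*}
where the error is absorbed into a small fraction of $I_1$.

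\textbf{Step 3 (Finite-dimensional inequality on averages).} It then remains to prove the finite-dimensional inequality
\begin{equation*}
\bigl|\overline{A}^\alpha - \overline{B}^\beta\bigr|^2 \;\geq\; \lambda\, I_2
\end{equation*}
for the averages $(\overline{a}_i,\overline{b}_j)$ constrained by the conservation laws $\beta_j\overline{a}_i + \alpha_i\overline{b}_j = \beta_j a_{i,\infty} + \alpha_i b_{j,\infty}$. Since $I_2=0$ precisely when $(\overline{a},\overline{b})=(a_\infty,b_\infty)$ (which by detailed balance coincides with $\overline{A}^\alpha=\overline{B}^\beta$ subject to the conservation laws), a compactness/continuity argument on the compact level set of conservation laws yields $\lambda>0$; an explicit constant can be extracted by expanding both sides near the equilibrium.

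\textbf{Main obstacle.} The most delicate part is Step 2, where $\|\sqrt{a^\alpha}-\sqrt{b^\beta}\|^2$ must be estimated from below by $|\overline{A}^\alpha-\overline{B}^\beta|^2$ in the absence of any a priori $L^\infty$-bound on $a_i,b_j$. This forces the Taylor expansion to be restricted to a subset of $\Omega$ of measure at least $1/2$ on which all $A_i,B_j$ are bounded in terms of $M_0$, exactly in the spirit of \textbf{Case 2} of the proof of Lemma \ref{IndDiff}. The finite-dimensional Step 3 is comparatively soft, being essentially a continuity argument on a compact set with explicit constants obtainable near equilibrium.
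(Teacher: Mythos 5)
Your Steps 1 and 2 follow essentially the same route as the paper (additivity of the relative entropy, the elementary bound $(x-y)\ln(x/y)\ge 4(\sqrt x-\sqrt y)^2$, and a Taylor expansion of $A^\alpha-B^\beta$ on a large-measure set where the fluctuations are bounded, which is the paper's Lemma \ref{inter}). The genuine gap is in Step 3: you conflate $\overline{A}_i=\overline{\sqrt{a_i}}$ with $\sqrt{\overline{a}_i}$. The conservation laws constrain the true averages $\overline{a}_i,\overline{b}_j$, not $\overline{A}_i,\overline{B}_j$, so $\bigl|\overline{A}^\alpha-\overline{B}^\beta\bigr|^2$ is not a function on the "compact level set of conservation laws", and the claimed finite-dimensional inequality $\bigl|\overline{A}^\alpha-\overline{B}^\beta\bigr|^2\ge\lambda I_2$ is in fact false as stated. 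A counterexample with $M=N=1$, $\alpha_1=\beta_1=1$ (so $a_\infty=b_\infty$): take $\overline{a}=a_\infty+\epsilon$, $\overline{b}=b_\infty-\epsilon$ with $\epsilon>0$, choose $b$ constant and $a$ non-constant with $\overline{\sqrt a}=\sqrt{\overline b}<\sqrt{\overline a}$ (possible since Jensen gives $\overline{\sqrt a}\le\sqrt{\overline a}$ with strict inequality available); then $\overline{A}=\overline{B}$ so the left-hand side vanishes while $I_2>0$. Likewise your parenthetical claim that $I_2=0$ iff $\overline A^\alpha=\overline B^\beta$ under the conservation laws is only true with $\sqrt{\overline{a}_i}$ in place of $\overline A_i$.

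The repair is exactly the extra step the paper performs and which your sketch omits: keep the variance terms $\sum_i\|\delta_i\|^2+\sum_j\|\eta_j\|^2$ (controlled by $I_1\le\widetilde D$) on the left of the key inequality, write $\overline{A}_i=\sqrt{\overline{A_i^2}}-Q_i(A_i)\|\delta_i\|$ with $0\le Q_i\le 1$ (and similarly for $B_j$), and Taylor expand once more to trade $\overline{A}^\alpha-\overline{B}^\beta$ for $\prod_i\overline{a}_i^{\,\alpha_i/2}-\prod_j\overline{b}_j^{\,\beta_j/2}$ up to errors of order $\sum_i\|\delta_i\|+\sum_j\|\eta_j\|$, which are absorbed. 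Only then does one face a genuinely finite-dimensional inequality in the constrained averages; also note $I_2$ must first be compared to $\sum_i(\sqrt{\overline{a}_i}-\sqrt{a_{i,\infty}})^2+\sum_j(\sqrt{\overline{b}_j}-\sqrt{b_{j,\infty}})^2$ via the function $\Phi(x,y)=\frac{x\ln(x/y)-x+y}{(\sqrt x-\sqrt y)^2}$ and the a priori bound $\overline{a}_i,\overline{b}_j\le M_0^2$ from the conservation laws. Finally, where you invoke a compactness/continuity argument (with a non-degeneracy expansion near equilibrium that you only assert), the paper gives a short constructive proof of the finite-dimensional inequality via the ansatz $\overline{a}_i=a_{i,\infty}(1+\mu_i)^2$, $\overline{b}_j=b_{j,\infty}(1+\zeta_j)^2$ and the observation that the conservation laws force $\mu_i\zeta_j\le 0$, which yields the explicit constant $1/\max\{M,N\}$ and hence the explicit $\lambda$ promised in the statement.
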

	\begin{remark}
		The above entropy-entropy production inequality was first proved in \cite{FT17a} in a constructive way with explicit bounds on the constant $\lambda$. 
The proof stated here follows the line of a significantly simplified version presented in \cite{FT17}. 
	\end{remark}
	\begin{proof}
		First, by the additivity of the relative entropy, we have
		\begin{equation*}
		\begin{aligned}
		E[a,b] - E[\ainf,\binf] &= (E[a,b] - E[\overline{a}, \overline{b}]) + (E[\overline{a}, \overline{b}] - E[\ainf,\binf])\\
		&=\left[ \sum_{i=1}^{M}\int_{\Omega}a_i\ln\frac{a_i}{\overline{a}_i}dx + \sum_{j=1}^{N}\int_{\Omega}b_j\ln\frac{b_j}{\overline{b}_j}dx\right]\\
		&\quad+\left[\sum_{i=1}^{M}\left(\overline{a}_i\ln\frac{\overline{a}_i}{a_{i,\infty}} - \overline{a}_i + a_{i,\infty}\right) + \sum_{j=1}^{N}\left(\overline{b}_j\ln\frac{\overline{b}_j}{b_{j,\infty}} - \overline{b}_j + b_{j,\infty}\right) \right]\\
		&=: (I) + (II).
		\end{aligned}
		\end{equation*}
		It is straightforward that $(I)$ can be controlled by $\widetilde{D}[a,b]$, i.e. 
		\begin{equation*}
		\frac 12 \widetilde{D}[a,b] \geq \frac {1}{2}\times (I).
		\end{equation*}
		It remains to control $(II)$. To do that, we first introduce the following useful notations and definitions 
		\begin{equation*}
		A_i = \sqrt{a_i},\quad B_j = \sqrt{b_j}, \quad A_{i,\infty} = \sqrt{a_{i,\infty}}, \quad B_{j,\infty} = \sqrt{b_{j,\infty}},
		\end{equation*}
		\begin{equation*}
		\delta_i(x) = A_i(x) - \overline{A}_i, \qquad \eta_j(x) = B_j(x) - \overline{B}_j,
		\end{equation*}
		and
		\begin{equation*}
		A^{\alpha} = \prod_{i=1}^{M}A_i^{\alpha_i}, \quad B^{\beta} = \prod_{j=1}^{N}B_j^{\beta_j}.
		\end{equation*}
		By the elementary inequality $(x-y)\ln(x/y) \geq 4(\sqrt{x} - \sqrt{y})^2$, we have
		\begin{equation*}
		\int_{\Omega}a_i\ln\frac{a_i}{\overline{a}_i}dx = \int_{\Omega}\left(a_i\ln\frac{a_i}{\overline{a}_i} - a_i + \overline{a}_i\right)dx \geq 4\int_{\Omega}(\sqrt{a_i} - \sqrt{\overline{a}_i})^2dx \geq 4\|\delta_i\|^2
		\end{equation*}
		and similarly
$\int_{\Omega}b_j\ln\frac{b_j}{\overline{b}_j}dx \geq 4\|\eta_j\|^2$.
Moreover,
$\int_{\Omega}(a^\alpha - b^\beta)\ln{\frac{a^\alpha}{b^\beta}}dx \geq 4\|A^\alpha - B^\beta\|^2$.
Therefore,
		\begin{equation}\label{bao1}
		\frac 12 \widetilde{D}[a,b] \geq 2\left[\sum_{i=1}^{M}\|\delta_i\|^2 + \sum_{j=1}^{N}\|\eta_j\|^2 + \|A^{\alpha} - B^{\beta}\|^2\right].
		\end{equation}
In order to bound to estimate the right-hand-side of 
\eqref{bao1} with an upper bound of $(II)$, we first observe from the conservation laws
		\begin{equation*}
		\beta_j\overline{a}_i + \alpha_i\overline{b}_j = \beta_ja_{i,\infty} + \alpha_ib_{j,\infty}, \qquad \text{ for all } i, j.
		\end{equation*}
		that there exists a constant $M_0>0$ such that
		\begin{equation*}
		\overline{a}_i, \overline{b}_j \leq M_0^2, \qquad \text{ for all } i, j.
		\end{equation*}
		Next, we note that the two variables function 
		\begin{equation*}
		\Phi(x,y) = \frac{x\ln(x/y) - x + y}{(\sqrt x - \sqrt y)^2}
		\end{equation*}
		is continuous on $(0,\infty)^2$ and $\Phi(\cdot, y)$ is increasing for each fixed $y$. Then, the term $(II)$ is estimated as
		\begin{equation}\label{bao2}
		\begin{aligned}
		(II) &= \sum_{i=1}^{M}\Phi(\overline{a}_i,a_{i,\infty})(\sqrt{\overline{a}_i} - \sqrt{a_{i,\infty}})^2+ \sum_{j=1}^{N}\Phi(\overline{b}_j,b_{j,\infty})(\sqrt{\overline{b}_j} - \sqrt{b_{j,\infty}})^2\\
		&\leq \max_{i,j}\{\Phi(M_0^2,a_{i,\infty});\Phi(M_0^2, b_{j,\infty})\}\Biggl(\sum_{i=1}^M(\sqrt{\overline{A_i^2}} - {A_{i,\infty}})^2 + \sum_{j=1}^N(\sqrt{\overline{B_j^2}} - {B_{j,\infty}})^2\Biggr).
		\end{aligned}
		\end{equation}
		From \eqref{bao1} and \eqref{bao2}, it remains to show that
		\begin{equation}\label{bao3}
		\sum_{i=1}^{M}\|\delta_i\|^2 + \sum_{j=1}^{N}\|\eta_j\|^2 + \|A^{\alpha} - B^{\beta}\|^2 \geq C_0\Biggl(\sum_{i=1}^M(\sqrt{\overline{A_i^2}} - {A_{i,\infty}})^2 + \sum_{j=1}^N(\sqrt{\overline{B_j^2}} - {B_{j,\infty}})^2\Biggr)
		\end{equation}
		for some constant $C_0>0$. By using Lemma \ref{inter}, we have with $\overline{A} = (\overline{A}_1, \ldots, \overline{A}_M)$ and $\overline{B} = (\overline{B}_1, \ldots, \overline{B}_N)$
		\begin{equation}\label{bao4}
		\sum_{i=1}^{M}\|\delta_i\|^2 + \sum_{j=1}^{N}\|\eta_j\|^2 + \|A^{\alpha} - B^{\beta}\|^2 \geq C_1\Biggl(\sum_{i=1}^{M}\|\delta_i\|^2 + \sum_{j=1}^{N}\|\eta_j\|^2 + \left|\overline{A}^\alpha - \overline{B}^\beta\right|^2\Biggr)
		\end{equation}
		for some constant $C_1>0$. Using the ansatz
		\begin{equation}\label{ansatz}
		\overline{A_i^2} = A_{i,\infty}^2(1+\mu_i)^2 \quad \text{ and } \quad\overline{B_j^2} = B_{j,\infty}^2(1+\zeta_j)^2, \qquad \text{where} \quad \mu_i, \zeta_j \in [-1,\infty),
		\end{equation}
the right hand side of \eqref{bao3} writes as
\begin{equation}\label{bao5}
\text{RHS of (\ref{bao3})} = C_0\Biggl(\sum_{i=1}^M\mu_i^2 + \sum_{j=1}^N\zeta_j^2\Biggr).
\end{equation}
Moreover, the bounds $\overline{a_i}=\overline{A_i^2} \leq M_0^2$ and $\overline{b_j}=\overline{B_j^2} \leq M_0^2$ imply
		\begin{equation}
		-1 \leq \mu_i \leq M_1 \quad \text{ and } -1\leq \zeta_j \leq M_1
		\end{equation}
		for some constant $M_1>0$. From the ansatz \eqref{ansatz} (and similar to the proof of Lemma \ref{variant-eede}), we have
\begin{align*}
\overline{A}_i &= \sqrt{\overline{A_i^2}} - Q_i(A_i)\|\delta_i\| = A_{i,\infty}(1+\mu_i) - Q_i(A_i)\|\delta_i\|\\
\overline{B_j} &= \sqrt{\overline{B_j^2}} - R_j(B_j)\|\eta_j\|= B_{j,\infty}(1+\zeta_j) - R_j(B_j)\|\eta_j\|
\end{align*}
where 
\begin{equation*}
		0 \leq Q_i(A_i) := \frac{\|\delta_i\|}{\sqrt{\overline{A_i^2}} + \overline{A}_i} \leq 1 \quad \text{ and } \quad 0 \leq R_j(B_j):= \frac{\|\eta_j\|}{\sqrt{\overline{B_j^2}} + \overline{B}_j} \leq 1.
		\end{equation*}
Next, we use Taylor expansion to estimate
		\begin{equation*}
		\overline{A_i}^{\alpha_i} = \left(A_{i,\infty}(1+\mu_i)- Q_i(A_i)\|\delta_i\|\right)^{\alpha_i} = A_{i,\infty}^{\alpha_i}(1+\mu_i)^{\alpha_i} + {\widehat{Q}_i\|\delta_i\|}
		\end{equation*}
in which the Lagrange remainder term ${\widehat{Q}_i= \widehat{Q}(\mu_i,  \|\delta_i\|)}$ is uniformly bounded {above by a constant for all admissible values of $\mu_i$ and $\|\delta_i\|$}
thanks to the boundedness of $\mu_i$ and $\|\delta_i\|\leq \sqrt{\overline{A_i^2}} \leq M_0$. Similarly,
		\begin{equation*}
		\overline{B_j}^{\beta_j} = B_{j,\infty}^{\beta_j}(1+\zeta_j)^{\beta_j} + {\widehat{R}_j\|\eta_j\|}
		\end{equation*}
		with uniformly bounded remainder $ \widehat{R}_j(\zeta_j,\|\eta_j\|)$. Thus
		\begin{equation*}
		\begin{aligned}
		\Bigl|\overline{A}^\alpha - \overline{B}^\beta\Bigr|^2 &= \Biggl|\prod_{i=1}^M\overline{A}_i^{\alpha_i} - \prod_{j=1}^{N}\overline{B}_j^{\beta_j}\Biggr|^2\\
		&= \Biggl|\prod_{i=1}^M\left(A_{i,\infty}^{\alpha_i}(1+\mu_i)^{\alpha_i} + \widehat{Q}_i\|\delta_i\|\right) - \prod_{j=1}^{N}\left(B_{j,\infty}^{\beta_j}(1+\zeta_j)^{\beta_j} + \widehat{R}_j\|\eta_j\|\right)\Biggr|^2\\
		&= \Biggl|A_{\infty}^\alpha\prod_{i=1}^M(1+\mu_i)^{\alpha_i} - B_{\infty}^\beta\prod_{j=1}^{N}(1+\zeta_j)^{\beta_j} + \Theta(\widehat{Q}_i, \widehat{R}_j)\Biggl(\sum_{i=1}^M\|\delta_i\| + \sum_{j=1}^N\|\eta_j\|\Biggr)\Biggr|^2
		\end{aligned}
		\end{equation*}
		with $\Theta(\widehat{Q}_i, \widehat{R}_j)$ is also uniformly bounded. Thus, by using $(x+y)^2 \geq \frac 12 x^2 - y^2$ and $A_{\infty}^\alpha = \sqrt{\ainf^\alpha} = \sqrt{\binf^\beta} = B_{\infty}^\beta$ and the Cauchy-Schwarz inequality,
		\begin{equation}
		\left|\overline{A}^\alpha - \overline{B}^\beta\right|^2
		\geq \frac 12 A_{\infty}^\alpha\Biggl|\prod_{i=1}^M(1+\mu_i)^{\alpha_i} - \prod_{j=1}^{N}(1+\zeta_j)^{\beta_j}\Biggr|^2 - |\Theta|^2(M+N)^2\Biggl(\sum_{i=1}^M\|\delta_i\|^2 + \sum_{j=1}^N\|\eta_j\|^2\Biggr).
		\end{equation}
		Hence, for any $\delta\in (0,1)$  holds
		\begin{equation*}
		\begin{aligned}
		\sum_{i=1}^M\|\delta_i\|^2 &+ \sum_{j=1}^N\|\eta_j\|^2 + \left|\overline{A}^\alpha - \overline{B}^\beta\right|^2\\
		&\geq \sum_{i=1}^M\|\delta_i\|^2 + \sum_{j=1}^N\|\eta_j\|^2\\
		&\quad + \delta\Biggl(\frac 12 A_{\infty}^\alpha\Biggl|\prod_{i=1}^M(1+\mu_i)^{\alpha_i} - \prod_{j=1}^{N}(1+\zeta_j)^{\beta_j}\Biggr|^2 - |\Theta|^2(M+N)^2\Biggl(\sum_{i=1}^M\|\delta_i\|^2 + \sum_{j=1}^N\|\eta_j\|^2\Biggr)\Biggr)\\
		&\geq \frac{\delta}{2}A_{\infty}^\alpha\Biggl|\prod_{i=1}^M(1+\mu_i)^{\alpha_i} - \prod_{j=1}^{N}(1+\zeta_j)^{\beta_j}\Biggr|^2
		\end{aligned}
		\end{equation*}
		by choosing $\delta$ small enough such that $1 \geq \delta|\Theta|^2(M+N)^2$ since $\Theta$ is uniformly bounded above. This leads in combination with \eqref{bao4} to a lower bound of the left hand side of \eqref{bao3}
		\begin{equation}\label{bao6}
		\text{LHS of (\ref{bao3})} \geq C_1\frac{\delta}{2}A_{\infty}^\alpha\Biggl|\prod_{i=1}^M(1+\mu_i)^{\alpha_i} - \prod_{j=1}^{N}(1+\zeta_j)^{\beta_j}\Biggr|^2.
		\end{equation}
		From \eqref{bao5} and \eqref{bao6}, it is sufficient to prove
		\begin{equation}\label{bao7}
		\Biggl|\prod_{i=1}^M(1+\mu_i)^{\alpha_i} - \prod_{j=1}^{N}(1+\zeta_j)^{\beta_j}\Biggr|^2 \geq C_2\Biggl(\sum_{i=1}^M\mu_i^2 + \sum_{j=1}^N\zeta_j^2\Biggr).
		\end{equation}
		
		In order to do so, we note that the conservation laws
		\begin{equation*}
		\beta_j\overline{a}_i + \alpha_i\overline{b}_j = \beta_ja_{i,\infty} + \alpha_ib_{j,\infty}
		\end{equation*}
		rewritten in terms of the ansatz \eqref{ansatz}, i.e.
		\begin{equation*}
		\beta_jA_{i,\infty}^2(\mu_i^2 + 2\mu_i) + \alpha_iB_{j,\infty}^2(\zeta_j^2 + 2\zeta_j) = 0.
		\end{equation*}
		imply $\mu_i\zeta_j \leq 0$ thanks to $\mu_i, \zeta_j \geq -1$ for all $i, j$. Without loss of generality, we assume $\mu_i \geq 0$ and $\zeta_j \leq 0$ for all $i, j$. Then, for any $1\leq i_0 \leq M$ and $1\leq j_0 \leq N$,
		\begin{equation*}
		\begin{aligned}
		\Biggl|\prod_{i=1}^M(1+\mu_i)^{\alpha_i} - \prod_{j=1}^{N}(1+\zeta_j)^{\beta_j}\Biggr| &\geq \prod_{i=1}^M(1+\mu_i)^{\alpha_i} - \prod_{j=1}^{N}(1+\zeta_j)^{\beta_j}
		\geq (1+\mu_{i_0})^{\alpha_{i_0}} - (1+\zeta_{j_0})^{\beta_{j_0}}\\
		&\geq (1+\mu_{i_0}) - (1+\zeta_{j_0})
		\geq \mu_{i_0} - \zeta_{j_0} \geq 0.
		\end{aligned}
		\end{equation*}
Thus
		\begin{equation*}
		\Biggl|\prod_{i=1}^M(1+\mu_i)^{\alpha_i} - \prod_{j=1}^{N}(1+\zeta_j)^{\beta_j}\Biggr|^2 \geq (\mu_{i_0} - \zeta_{j_0})^2 = \mu_{i_0}^2 - 2\mu_{i_0}\zeta_{j_0} + \zeta_{j_0}^2 \geq \mu_{i_0}^2 + \zeta_{j_0}^2.
		\end{equation*}
		Since $1\leq i_0\leq M$ and $1 \leq j_0\leq N$ are arbitrary, we finally obtain \eqref{bao7} with $C_2 = 1/\max\{M; N\}$.
	\end{proof}
	\begin{lemma}\label{inter}
		Let $a_i, b_j$ be functions defined in Lemma \ref{EnEnDiEs}. Then, there exists a constant $C$ such that
		\begin{equation*}
		\sum_{i=1}^{M}\|\delta_i\|^2 + \sum_{j=1}^{N}\|\eta_j\|^2 + \|A^{\alpha} - B^{\beta}\|^2 \geq C\left|\overline{A}^\alpha - \overline{B}^\beta\right|^2.
		\end{equation*}
	\end{lemma}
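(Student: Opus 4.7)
My plan is to adapt the truncation-plus-Taylor-expansion technique already developed in the proof of Lemma~\ref{IndDiff} (and reused in the derivation leading to \eqref{twostar}). The starting observation is that the conservation laws \eqref{lawsAB} force $\overline{A}_i, \overline{B}_j \leq M_0$ for some constant $M_0 > 0$, so that the right-hand side $\left|\overline{A}^\alpha - \overline{B}^\beta\right|^2$ is bounded above by a universal constant $K = K(M_0, \alpha, \beta)$. Hence I only need to produce a positive lower constant in a ``small-deviations'' regime.

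I would argue by cases. \textbf{Case 1:} if some $\|\delta_i\|^2$ or some $\|\eta_j\|^2$ exceeds a small threshold $\varepsilon > 0$ (to be chosen), the left-hand side dominates $\varepsilon$ while the right-hand side is at most $K$, so the lemma holds trivially with $C = \varepsilon/K$. \textbf{Case 2:} otherwise $\|\delta_i\|^2, \|\eta_j\|^2 \leq \varepsilon$ for all $i, j$, and verbatim as in Lemma~\ref{IndDiff}, Chebyshev's inequality with $\lambda^2 = 2(M+N)$ produces a ``good'' set $G \subset \Omega$ with $|G| \geq 1/2$ on which $|\delta_i(x)|, |\eta_j(x)| \leq \lambda\sqrt{\varepsilon}$ pointwise. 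Choosing $\varepsilon$ so small that $\lambda\sqrt{\varepsilon} \leq M_0$ further yields the pointwise bounds $A_i(x), B_j(x) \leq 2M_0$ on $G$, which make the Taylor expansion
\begin{equation*}
A^\alpha(x) - B^\beta(x) = \overline{A}^\alpha - \overline{B}^\beta + R_A(x)\sum_{i=1}^M \delta_i(x) + R_B(x)\sum_{j=1}^N \eta_j(x), \qquad x \in G,
\end{equation*}
valid with uniformly bounded remainders $R_A(x), R_B(x)$ on $G$.

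Combining $(x+y)^2 \geq \frac{1}{2}x^2 - y^2$ with Cauchy--Schwarz and $|G| \geq 1/2$ will then yield
\begin{equation*}
\|A^\alpha - B^\beta\|^2 \geq \int_G |A^\alpha - B^\beta|^2\,dx \geq \frac{1}{4}\left|\overline{A}^\alpha - \overline{B}^\beta\right|^2 - C_1\Bigl(\sum_{i=1}^M\|\delta_i\|^2 + \sum_{j=1}^N\|\eta_j\|^2\Bigr)
\end{equation*}
for a constant $C_1 = C_1(M_0, M, N, \alpha, \beta)$. Adding $\sum_i\|\delta_i\|^2 + \sum_j\|\eta_j\|^2$ to both sides yields the claim directly if $C_1 \leq 1$; otherwise the excess $L^2$-norms can be absorbed using $\|\delta_i\|^2, \|\eta_j\|^2 \leq \varepsilon$, after which $\varepsilon$ is taken small enough relative to $K$ so that a strictly positive multiple of $\left|\overline{A}^\alpha - \overline{B}^\beta\right|^2$ survives. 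Combining the two cases delivers the required constant $C > 0$.

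The only delicate point in this plan is the uniform control of the Taylor remainders $R_A, R_B$ on the good set $G$, which is precisely what dictates the quantitative choice of $\varepsilon$. Since this step is entirely analogous to the remainder estimates already executed in the proof of Lemma~\ref{IndDiff}, I anticipate no new difficulty beyond careful bookkeeping of the constants $K$ and $C_1$.
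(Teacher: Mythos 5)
Your truncation-plus-Taylor strategy is essentially the same device as the paper's proof (which truncates pointwise at a fixed level $L$ rather than via an $L^2$ case split plus Chebyshev), and everything up to the Case~2 estimate $\|A^\alpha-B^\beta\|^2 \geq \tfrac14\bigl|\overline{A}^\alpha-\overline{B}^\beta\bigr|^2 - C_1\bigl(\sum_{i=1}^M\|\delta_i\|^2+\sum_{j=1}^N\|\eta_j\|^2\bigr)$ is sound. The gap is in how you conclude from it when $C_1>1$, which is the generic situation: $C_1$ is fixed by $M_0,M,N,\alpha,\beta$ and cannot be assumed $\leq 1$. Absorbing the excess via $\|\delta_i\|^2,\|\eta_j\|^2\leq\varepsilon$ only yields $\mathrm{LHS}\geq \tfrac14\bigl|\overline{A}^\alpha-\overline{B}^\beta\bigr|^2-(C_1-1)(M+N)\varepsilon$, and no choice of $\varepsilon$ ``small relative to $K$'' turns this into $\mathrm{LHS}\geq C\bigl|\overline{A}^\alpha-\overline{B}^\beta\bigr|^2$: Case~2 puts no lower bound on $\bigl|\overline{A}^\alpha-\overline{B}^\beta\bigr|^2$, which may be far smaller than $\varepsilon$, so subtracting a constant of size $O(\varepsilon)$ can wipe out the entire right-hand side. (This is precisely the point where Lemma~\ref{inter} differs from Lemma~\ref{IndDiff}: there the conservation laws and the smallness of the $\overline{B_j^2}$ force the analogous average term to stay bounded away from zero, so an $O(\varepsilon)$ loss is harmless; here it is not.) Smallness of $\varepsilon$ relative to the upper bound $K$ of the right-hand side is beside the point, because the failure occurs when the right-hand side is small, not large.

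The repair is cheap and is exactly the device used in the paper: put a small convex weight on the Taylor-expanded estimate instead of adding and subtracting. Since the left-hand side of the lemma dominates both $\sum_{i=1}^M\|\delta_i\|^2+\sum_{j=1}^N\|\eta_j\|^2$ and $\theta\|A^\alpha-B^\beta\|^2$ for any $\theta\in(0,1]$, choosing $\theta\leq 1/C_1$ gives
\begin{equation*}
\mathrm{LHS} \;\geq\; \frac{\theta}{4}\bigl|\overline{A}^\alpha-\overline{B}^\beta\bigr|^2 + (1-\theta C_1)\Bigl(\sum_{i=1}^M\|\delta_i\|^2+\sum_{j=1}^N\|\eta_j\|^2\Bigr) \;\geq\; \frac{\theta}{4}\bigl|\overline{A}^\alpha-\overline{B}^\beta\bigr|^2,
\end{equation*}
with no smallness of $\varepsilon$ needed at this stage; this is the role of the weights $\theta_1,\theta_2$ in the paper's proof. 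Note that the paper's version also avoids your case split entirely: truncating pointwise on $S=\{|\delta_i|\leq L,\ |\eta_j|\leq L\ \forall i,j\}$ gives the Taylor bound on $S$, while on $S^\perp$ one uses $\sum_{i}\|\delta_i\|^2+\sum_{j}\|\eta_j\|^2\geq L^2|S^\perp|\geq (L^2/\Lambda)\bigl|\overline{A}^\alpha-\overline{B}^\beta\bigr|^2|S^\perp|$ with the a priori bound $\bigl|\overline{A}^\alpha-\overline{B}^\beta\bigr|^2\leq\Lambda$, and $|S|+|S^\perp|=1$ closes the argument. With the weighted combination in place, your two-case Chebyshev variant works as well.
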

	\begin{proof}
		Fix a constant $L>0$. Denote by
		\begin{equation*}
		S = \{x\in\Omega: |\delta_i(x)| \leq L, |\eta_j(x)|\leq L \text{ for all } i=1,\ldots, M, \; j=1,\ldots, N\} \quad \text{ and } \quad S^\perp = \Omega\backslash S.
		\end{equation*}
		Recalling $\overline{A_i} \leq \sqrt{\overline{A_i^2}} \leq M_0$ and $\overline{B_j} \leq \sqrt{\overline{B_j^2}}\leq M_0$, we use Taylor expansion to estimate
		\begin{equation}\label{bao8}
		\begin{aligned}
		\|A^\alpha - B^\beta\|^2 &\geq \int_{S}\biggl|\prod_{i=1}^{M}(\overline{A}_i + \delta_i(x))^{\alpha_i} - \prod_{j=1}^N(\overline{B}_j + \eta_j(x))^{\beta_j}\biggr|^2dx\\
		&\geq \frac 12 \Bigl|\overline{A}^\alpha - \overline{B}^\beta\Bigr|^2|S| - \widetilde{R}(\overline{A}_i, \overline{B}_j, |\delta_i|, |\eta_j|)\Biggl(\sum_{i=1}^{M}\|\delta_i\|^2 + \sum_{j=1}^N\|\eta_j\|^2\Biggr)
		\end{aligned}
		\end{equation}
		where $|\widetilde{R}| \leq C(M_0, L)$ due to the boundedness of $\delta_i$ and $\eta_j$ in $S$. In $S^\perp$, we have
		\begin{equation*}
		\sum_{i=1}^{M}\|\delta_i\|^2 + \sum_{j=1}^{N}\|\eta_j\|^2 \geq \int_{S^\perp}\Biggl(\sum_{i=1}^M|\delta_i(x)|^2 + \sum_{j=1}^N|\eta_j(x)|^2\Biggr)dx \geq L^2|S^\perp|.
		\end{equation*}
Next, there clearly exists a constant $\Lambda>0$ such that
$\left|\overline{A}^\alpha - \overline{B}^\beta\right|^2 \leq \Lambda$
since $\overline{A}_i, \overline{B}_j \leq M_0$. Therefore,
		\begin{equation}\label{bao9}
		\sum_{i=1}^{M}\|\delta_i\|^2 + \sum_{j=1}^{N}\|\eta_j\|^2 \geq L^2|S^\perp| \geq \frac{L^2}{\Lambda}\left|\overline{A}^\alpha - \overline{B}^\beta\right|^2|S^\perp|.
		\end{equation}
		Combining \eqref{bao8} and \eqref{bao9} we find for any $\theta_1, \theta_2 \in (0,1)$
		\begin{equation*}
		\begin{aligned}
		\sum_{i=1}^M\|\delta_i\|^2 + \sum_{j=1}^N\|\eta_j\|^2 + \|A^\alpha - B^\beta\|^2
		&\geq \theta_1\frac{L^2}{\Lambda}\left|\overline{A}^\alpha - \overline{B}^\beta\right|^2|S^\perp| + (1-\theta_1)\Biggl(\sum_{i=1}^M\|\delta_i\|^2 + \sum_{j=1}^N\|\eta_j\|^2\Biggr)\\
		&\quad + \theta_2\frac 12 \left|\overline{A}^\alpha - \overline{B}^\beta\right|^2|S| - \theta_2|\widetilde{R}|\Biggl(\sum_{i=1}^{M}\|\delta_i\|^2 + \sum_{j=1}^N\|\eta_j\|^2\Biggr)\\
		&\geq \min\left\{\theta_1\frac{L^2}{\Lambda}; \theta_2\frac 12 \right\}\left|\overline{A}^\alpha - \overline{B}^\beta\right|^2(|S| + |S^\perp|)\\
		&= \min\left\{\theta_1\frac{L^2}{\Lambda}; \theta_2\frac 12 \right\}\left|\overline{A}^\alpha - \overline{B}^\beta\right|^2
		\end{aligned}
		\end{equation*}
		by choosing $\theta_1, \theta_2$ small enough such that $1 - \theta_1 - \theta_2|\widetilde{R}| \geq 0$ and using $|S| + |S^\perp| = |\Omega| = 1$. The proof of Lemma \ref{inter} is hence complete.
	\end{proof}
	
\section{Proof Theorem \ref{thm:1.1}: existence of global weak solution to \eqref{S}}\label{app}
In this section, we give a proof Theorem \ref{thm:1.1} about the global existence of weak solutions to \eqref{S} under the conditions \eqref{G}-\eqref{M}-\eqref{P}.
Consider the approximating system
\begin{equation}\label{approx}
	\partial_t u_{i,\varepsilon} - d_i\Delta(u_{i,\varepsilon}^{m_i}) = f_{i,\varepsilon}(u_{\varepsilon}):= \frac{f_i(u_{\varepsilon})}{1+\varepsilon\sum_{i=1}^{S}|f_i(u_{\varepsilon})|}, \quad \nabla(u_{i,\varepsilon}^{m_i})\cdot \overrightarrow{n} = 0, \quad u_{i,\varepsilon}(x,0) = u_{i,0,\varepsilon}(x)
\end{equation}
where $u_{\varepsilon} = (u_{1,\varepsilon}, \ldots, u_{S,\varepsilon})$ and the sequence of approximating {nonnegative} initial data $u_{i,0,\varepsilon}\in L^{\infty}(\Omega)$ converges to $u_{i,0}$ in $L^2(\Omega)$. By the construction of the approximative system, it directly follows that the nonlinearities $f_{i,\varepsilon}$ still satisfy the conditions \eqref{M} and \eqref{P}. Moreover, for $\varepsilon>0$
\begin{equation*}
	|f_{i,\varepsilon}(u_{\varepsilon})| \leq \frac{|f_i(u_{\varepsilon})|}{1+\varepsilon\sum_{i=1}^{S}|f_i(u_{\varepsilon})|} \leq \frac{1}{\varepsilon} \quad \text{ for all } u_{\varepsilon}\in \mathbb R^S.
\end{equation*}
Hence, by a classical result for the porous medium equation with $L^{\infty}$ data, there exists a strong nonnegative solution $u_{\varepsilon} = (u_{i,\varepsilon})_{i=1\ldots S}$ (see e.g. \cite[Section 8]{Vaz07}) in the sense that
\begin{equation*}
	u_{i,\varepsilon}^{m_i}\in L^2_{loc}(0,+\infty;H^1(\Omega)), \quad \partial_tu_{i,\varepsilon} = d_i\Delta(u_{i,\varepsilon}^{m_i}) + f_{i,\varepsilon}(u_{\varepsilon}) \in L^1_{loc}(0,+\infty;L^1(\Omega)), 
\end{equation*} 
\begin{equation*}
	u_{i,\varepsilon}\in C([0,T); L^1(\Omega)) \text{ and } u_{i,\varepsilon}(0) = u_{i,0,\varepsilon},
\end{equation*}
and the equation for $u_{i,\varepsilon}$ holds a.e. in $Q_T$ for any $T>0$. Therefore, it follows immediately that
\begin{equation}\label{weakform}
	-\int_{\Omega}u_{i,0,\varepsilon}\psi(0)dx - \int_0^T\int_{\Omega}(\partial_t \psi u_{i,\varepsilon} + u_{i,\varepsilon}^{m_i}\Delta \psi)dxdt = \int_0^T\int_{\Omega}f_{i,\varepsilon}(u_{\varepsilon})\psi dxdt
\end{equation}
for any test function $\psi \in C^{2,1}(\overline{\Omega}\times [0,T])$ with $\psi(T) = 0$ and $\nabla \psi \cdot \overrightarrow{n} = 0$ on $\partial\Omega\times (0,T)$. 

In order to pass to the limit as $\varepsilon\to 0$ in the weak formula \eqref{weakform},
we use the following uniform a-priori estimates, which are a consequence of a duality argument 
in the spirit of e.g. \cite{Pie10} and references therein. 
\begin{lemma}[Duality estimates and uniform a-priori estimates for the approximating solutions, cf. \cite{LP17}]\label{duality}
		Let $u_{\varepsilon} = (u_{1,\varepsilon},\ldots, u_{S,\varepsilon})$ be the {nonnegative} solutions to the approximating system \eqref{approx}. Then,
		\begin{equation*}
		\|u_{i,\varepsilon}\|_{L^{m_i+1}(Q_T)} \leq C \quad \text{ for all } \quad T>0 \quad \text{ and } \quad i = 1,\ldots, S,
		\end{equation*}
		where the $\varepsilon$-independent constant $C$ only depends on the $L^2$-norm of the initial data, the positive constants $\lambda_i$ of assumption \eqref{M}, the positive diffusion coefficients $d_i$ and the domain $\Omega$.
		Moreover, we have 	
		\begin{equation*}
		\|f_{i,\varepsilon}(u_{\varepsilon})\|_{L^{1+\delta}(Q_T)} \leq C
	        \end{equation*}
for some $\delta > 0$, where the constant $C$ depends only on the $L^2$-norm of $u_{i,0,\varepsilon}$, the positive constants $\lambda_i$ of assumption \eqref{M}, the diffusion coefficients $d_i$, the exponents $m_i$ and the domain $\Omega$.
\end{lemma}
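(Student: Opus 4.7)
The plan is to follow Pierre's duality method (see \cite{Pie10}) as adapted to porous-medium systems in \cite{LP17}. The starting observation is that the truncated nonlinearities $f_{i,\varepsilon}$ inherit the mass-dissipation property \eqref{M}, so multiplying the $i$-th equation of \eqref{approx} by $\lambda_i$ and summing, with the abbreviations $U_\varepsilon := \sum_i \lambda_i u_{i,\varepsilon}$ and $V_\varepsilon := \sum_i \lambda_i d_i u_{i,\varepsilon}^{m_i}$, I would arrive at the scalar inequality $\partial_t U_\varepsilon - \Delta V_\varepsilon \leq 0$ on $Q_T$ with homogeneous Neumann boundary data. Integration over $\Omega$ immediately yields the $\varepsilon$-uniform $L^\infty_tL^1_x$-bound $\|U_\varepsilon(t)\|_{L^1(\Omega)} \leq \|U_\varepsilon(0)\|_{L^1(\Omega)} \leq C$.

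Next, I would derive an $L^2$-based duality estimate. Given a nonnegative test function $\theta \in L^2(Q_T)$, let $\psi \geq 0$ solve the backward adjoint heat problem
\begin{equation*}
-\partial_t\psi - \Delta\psi = \theta \quad \text{in } Q_T, \qquad \psi(\cdot,T)=0, \qquad \partial_\nu\psi|_{\partial\Omega}=0,
\end{equation*}
for which parabolic $L^2$-maximal regularity provides $\|\Delta\psi\|_{L^2(Q_T)} + \|\psi(0)\|_{L^2(\Omega)} \leq C\|\theta\|_{L^2(Q_T)}$. Testing the sum inequality against $\psi$ and integrating by parts (using $\psi(T)=0$ and the Neumann data) produces
\begin{equation*}
\iint_{Q_T} \theta\, U_\varepsilon\, dx\, dt \leq \int_\Omega \psi(0)\, U_\varepsilon(0)\, dx + \iint_{Q_T} (V_\varepsilon - U_\varepsilon)\Delta\psi\, dx\, dt.
\end{equation*}
I would then specialise $\theta$ to a suitable truncation of $V_\varepsilon$ and combine the resulting inequality with the pointwise coercivity
\begin{equation*}
U_\varepsilon V_\varepsilon = \Big(\sum_{i=1}^{S} \lambda_i u_{i,\varepsilon}\Big)\Big(\sum_{j=1}^{S} \lambda_j d_j u_{j,\varepsilon}^{m_j}\Big) \geq c \sum_{i=1}^S u_{i,\varepsilon}^{m_i+1},
\end{equation*}
valid because all cross-terms are nonnegative and the diagonal contributions already suffice. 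Cauchy--Schwarz together with Young's inequality would then absorb the $\|V_\varepsilon\|_{L^2}$-type contributions and deliver the claimed bound $\sum_i \|u_{i,\varepsilon}\|_{L^{m_i+1}(Q_T)}^{m_i+1} \leq \iint_{Q_T} U_\varepsilon V_\varepsilon \leq C$, with $C$ depending only on $\|u_{i,0}\|_{L^2}$, the $\lambda_i$, the $d_i$, the $m_i$ and $\Omega$.

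The second assertion of the lemma follows at once: the growth condition \eqref{G} gives $|f_{i,\varepsilon}(u_\varepsilon)|^{1+\delta} \leq C(1 + |u_\varepsilon|^{\nu(1+\delta)})$, and under the hypothesis $m_i > \nu - 1$ of Theorem \ref{thm:1.1} one has $m_i + 1 > \nu$, so the choice $\delta := \min_i(m_i + 1 - \nu)/\nu > 0$ ensures $\nu(1+\delta) \leq m_i + 1$ for every $i$, and the first claim then provides the desired $L^{1+\delta}(Q_T)$-bound on $f_{i,\varepsilon}(u_\varepsilon)$. The main obstacle will be the closing step of the duality estimate, since $\|V_\varepsilon\|_{L^2}^2$ naturally controls $\sum_i \iint u_{i,\varepsilon}^{2m_i}$, which is strictly stronger than the sought $L^{m_i+1}$-integrability when $m_i > 1$; the closure depends on carefully tuning the duality test function so that the coercivity of $\iint U_\varepsilon V_\varepsilon$ precisely absorbs the offending quadratic terms, bypassing the need for any a priori $L^\infty$-bound on the $u_{i,\varepsilon}$.
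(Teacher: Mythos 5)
Your reduction to the scalar inequality $\partial_t U_\varepsilon-\Delta V_\varepsilon\le 0$ and your treatment of the second assertion (the $L^{1+\delta}$ bound on $f_{i,\varepsilon}$ via \eqref{G} and $m_i+1>\nu$) match the paper. However, the core of the lemma --- closing the duality estimate --- is exactly the step you leave open, and the route you sketch does not close. With the backward heat dual $-\partial_t\psi-\Delta\psi=\theta$ and $\theta$ a (truncation of) $V_\varepsilon$, maximal regularity gives $\|\Delta\psi\|_{L^2(Q_T)}+\|\psi(0)\|_{L^2(\Omega)}\le C\|V_\varepsilon\|_{L^2(Q_T)}$, so the right-hand side of your duality identity is quadratic in $\|V_\varepsilon\|_{L^2(Q_T)}$, i.e. of size $\sum_i\iint_{Q_T}u_{i,\varepsilon}^{2m_i}$, while the coercive left-hand side $\iint_{Q_T}U_\varepsilon V_\varepsilon$ only controls $\sum_i\iint_{Q_T}u_{i,\varepsilon}^{m_i+1}$. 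Since $2m_i>m_i+1$ for $m_i>1$, no choice of Young exponents lets the good term absorb the bad one; this is precisely the point where the linear-diffusion version of Pierre's argument (where $U$ and $V$ are pointwise comparable, so $UV\gtrsim V^2$) breaks down, and appealing to ``carefully tuning the duality test function'' is not a proof.

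The paper (following \cite{LP17}) avoids any $L^2(Q_T)$ bound on $V_\varepsilon$ altogether. One first integrates the inequality in time, $Z(t)-Z(0)\le \Delta\int_0^t W(s)\,ds$ (in the paper's notation $Z=U_\varepsilon$, $W=V_\varepsilon$), and then multiplies by $W(t)$ itself rather than by a dual solution: the cross term becomes the exact derivative $\tfrac12\tfrac{d}{dt}\int_\Omega|\nabla\int_0^tW|^2dx$, which has a sign and can be dropped, leaving $\iint_{Q_T}ZW\le\iint_{Q_T}Z(0)W(t)\,dx\,dt$. The remaining right-hand side is only \emph{linear} in $W$, and it is bounded by $C\|Z(0)\|_{L^2(\Omega)}^2$ by testing the time-integrated inequality with the solution $\theta_0\ge0$ of the elliptic problem $-\Delta\theta_0=Z(0)$. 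Keeping only the diagonal terms in $ZW$ then yields $\sum_i d_i\lambda_i^2\|u_{i,\varepsilon}\|_{L^{m_i+1}(Q_T)}^{m_i+1}\le C\|Z(0)\|_{L^2}^2$. If you want to salvage your write-up, replace the parabolic dual problem by this time-integration argument; as it stands, your proposal identifies the obstacle correctly but does not supply the idea that overcomes it.
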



	\begin{proof}
		The proof follows \cite{LP17} with straightforward changes due to the considered Neumann (instead of Dirichlet) boundary conditions. By setting	
		\begin{equation*}
		Z = \sum_{i=1}^{S}\lambda_iu_{i,\varepsilon}\quad \text{ and } \quad W = \sum_{i=1}^{S}d_i\lambda_iu_{i,\varepsilon}^{m_i}
		\end{equation*}
		and by summing up the equations of systems \eqref{S}, the mass dissipation property \eqref{M} implies
		\begin{equation*}
		\partial_t Z - \Delta W \leq 0 \quad \text{ and } \quad \nabla W \cdot \overrightarrow{n} = 0.
		\end{equation*}
		Then, integration over $(0,t)$ and multiplication with $W(t)$ in $L^2(\Omega)$ (due to the regularity of the approximative solutions) leads 
		after integration over $\Omega$ to
		\begin{equation}\label{f1}
		\int_{\Omega}\left(Z(t)-Z(0)\right)W(t)dx - \int_{\Omega} W(t)\Delta\!\int_0^tW(s)ds dx \leq 0.
		\end{equation}
		Next, we integrate by parts with homogeneous Neumann boundary conditions the second term on the left hand side and calculate 
		\begin{equation*}
		- \int_{\Omega} W(t) \Delta\!\int_0^tW(s)ds\; dx = \int_{\Omega}\nabla W(t)\cdot \nabla\!\int_0^tW(s)ds\;dx 
		= \frac{1}{2}\frac{d}{dt}\int_{\Omega}|\nabla\!  \int_0^tW(s)ds|^2dx.
		\end{equation*}
		Therefore, by integrating \eqref{f1} with respect to $t$ on $(0,T)$, we obtain
		\begin{equation}\label{f2}
		\int_{0}^{T}\!\!\int_{\Omega}Z(t)W(t)dxdt + \frac{1}{2}\int_{\Omega}|\nabla \!  \int_0^TW(s)ds|^2dx \leq \int_{0}^{T}\!\!\int_{\Omega}Z(0)W(t)dxdt.
		\end{equation}
		Moreover, we note that
		\begin{equation}\label{f3}
		\int_0^T\!\!\int_{\Omega}Z(t)W(t)dxdt = \int_0^T\!\!\int_{\Omega}\left(\sum_{i=1}^{S}\lambda_iu_i\right)\left(\sum_{i=1}^{S}d_i\lambda_iu_i^{m_i}\right)dxdt
		\geq \sum_{i=1}^{S}d_i\lambda^2_i\|u_{i}\|_{L^{m_i+1}(Q_T)}^{m_i+1}
		\end{equation}
		due to the nonnegativity of functions $u_i$ and the constant $\lambda_i$. 
		To estimate the right hand side of \eqref{f2} in terms of the $L^2$-norm of $Z(0)$, we first notice from $\partial_t Z - \Delta W \leq 0$ that
		\begin{equation*}
			Z(T) - \Delta \int_0^TWdt \leq Z(0).
		\end{equation*}
		Multiplying this inequality with $\theta_0$ in $L^2(\Omega)$, where $\theta_0 \geq 0$ solves $-\Delta \theta_0 = Z(0)$, $\theta_0|_{\partial \Omega} = 0$, and using integration by parts $-\int_{\Omega}\theta_0\Delta\int_0^TW(t)dtdx = -\int_{\Omega}\Delta \theta_0\int_0^TW(t)dtdx$, leads to
		\begin{equation*}
			\int_{\Omega}Z(T)\theta_0 dx + \int_{\Omega}\left(Z(0)\int_0^TW(t)dt\right)dx \leq \int_{\Omega}Z(0)\theta_0dx = \|\nabla \theta_0\|^2 \leq C\|Z(0)\|^2,
		\end{equation*}
		which, together with $\int_{\Omega}Z(T)\theta_0dx \geq 0$, implies
		\begin{equation}\label{f4}
			\int_0^T\!\!\int_{\Omega}Z(0)W(t)dxdt \leq C\|Z(0)\|^2.
		\end{equation}
		By inserting \eqref{f3} and \eqref{f4} into \eqref{f2}, we obtain 
		\begin{equation*}
		\sum_{i=1}^{S}d_i\lambda^2_i\|u_{i,\varepsilon}\|_{L^{m_i+1}(Q_T)}^{m_i+1} 
		\leq C\|Z(0)\|^{2},
		\end{equation*}
		which completes the proof of the first a-priori estimate of Lemma \ref{duality}.
		
		Concerning the second uniform a-priori estimate for the nonlinearities, we have
	\begin{equation*}
		|f_{i,\varepsilon}(u_{\varepsilon})| \leq |f_{i}(u_{\varepsilon})| \leq C(1+|u_{\varepsilon}|^{\nu}),
	\end{equation*}
	where $C$ does not depend on $\varepsilon$. By the assumption $m_{i} > \nu - 1$ and the estimate of $\|u_{i,\varepsilon}\|_{L^{m_i+1}(Q_T)}$, we obtain $\|f_{i,\varepsilon}(u_{\varepsilon})\|_{L^{1+\delta}(Q_T)} \leq C$.
	\end{proof}

The following compactness lemma allows to extract a converging subsequence from the approximating system.
\begin{lemma}\cite{Bar78}\label{compact}
	Let $m> (d-2)_+/d$ with $(d-2)_+ = \max\{0,d-2\}$. The mapping $L^1(\Omega)\times L^1(Q_T)\ni (u_0, f) \mapsto u \in L^1(Q_T)$ where $u\in C([0,T];L^1(\Omega))$ is the weak solution to 
	\begin{equation*}
		\partial_t u - \delta\Delta(u^{m}) = f, \quad \nabla(u^m)\cdot \overrightarrow{n} = 0, \quad u(0) = u_0,
	\end{equation*}
	with $\delta>0$,
	is compact.
\end{lemma}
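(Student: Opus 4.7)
The goal is to verify the hypotheses of the Fréchet-Kolmogorov compactness criterion in $L^1(Q_T)$ for the family of solutions $u$ corresponding to bounded sets of data $(u_0, f) \in L^1(\Omega) \times L^1(Q_T)$. This requires uniform boundedness in $L^1(Q_T)$, uniform equi-integrability of spatial translates, and uniform equi-integrability of temporal translates.

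Uniform boundedness is the easy step and follows from the $L^1$-contraction property of the porous medium operator with homogeneous Neumann boundary conditions: testing the equation with a regularised $\mathrm{sign}(u)$ and passing to the limit gives
\begin{equation*}
\|u(t)\|_{L^1(\Omega)} \leq \|u_0\|_{L^1(\Omega)} + \int_0^t\|f(s)\|_{L^1(\Omega)}ds,
\end{equation*}
so $u$ stays in a bounded subset of $L^\infty(0,T;L^1(\Omega)) \subset L^1(Q_T)$. For spatial equi-integrability, I would exploit the smoothing effect of the porous medium operator through the energy estimate obtained (at the approximation level) by multiplying the equation by a suitable truncation of $u^m$ and integrating by parts,
\begin{equation*}
\frac{1}{m+1}\frac{d}{dt}\int_\Omega u^{m+1}dx + \delta\int_\Omega |\nabla u^m|^2dx = \int_\Omega fu^m dx,
\end{equation*}
yielding a uniform bound on $u^m$ in $L^2(0,T; H^1(\Omega))$ after handling of the right-hand side by truncation. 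Combined with the $L^1$-bound on $u$ and Sobolev embedding, whose effectiveness is ensured precisely by the condition $m > (d-2)_+/d$, this furnishes a uniform modulus of continuity in $L^1(Q_T)$ for spatial translates of $u$.

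For temporal equi-continuity, one uses the equation $\partial_t u = \delta\Delta(u^m) + f$ and the previous spatial estimate to bound $\partial_t u$ in a space of negative spatial regularity uniformly over the data set; an Aubin-Lions-Simon compactness argument with $L^1(\Omega)$ as pivot then closes the verification of Fréchet-Kolmogorov. The main obstacle lies in the sharpness of the threshold $m > (d-2)_+/d$: as $m$ approaches this critical value the energy estimate provides only marginal smoothing, and the spatial modulus of continuity becomes delicate to derive for data merely in $L^1$. A fully rigorous treatment -- this is the content of the cited result -- requires the nonlinear semigroup framework of compact $m$-accretive operators in $L^1$, which handles the porous medium degeneracy at the optimal exponent threshold more robustly than the direct energy manipulations sketched above.
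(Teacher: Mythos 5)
The paper does not prove this lemma at all: it is quoted verbatim from Baras [Bar78], whose theorem operates in the nonlinear semigroup framework (the porous medium operator with Neumann conditions is $m$-accretive in $L^1(\Omega)$ and, for $m>(d-2)_+/d$, has compact resolvent; compactness of the data-to-mild-solution map then follows from the abstract result). Your attempt to replace this by a direct Fr\'echet--Kolmogorov verification founders at its key quantitative step. With data bounded only in $L^1(\Omega)\times L^1(Q_T)$ you cannot obtain a uniform bound of $u^m$ in $L^2(0,T;H^1(\Omega))$: the right-hand side $\int_\Omega f\,u^m\,dx$ in your energy identity is not controllable when $f$ is merely $L^1$ and $u$ has no $L^\infty$ bound, and no truncation repairs this --- already for $m=1$ (heat equation with $L^1$ data) the true gain is only $\nabla u\in L^q(Q_T)$ for $q<(d+2)/(d+1)$. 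The correct analogue for the PME is a Boccardo--Gallou\"et/Lukkari-type estimate, $\nabla(u^{m})\in L^{\beta}(Q_T)$ for $\beta<1+\tfrac{1}{1+m d}$ with a constant depending only on $\|u_0\|_1$ and $\|f\|_{L^1(Q_T)}$ (exactly the estimate invoked later in Section 5 of the paper), and it is this much weaker bound, combined with the Sobolev embedding made available by $m>(d-2)_+/d$ and an $L^1$-type Aubin--Lions/Simon argument for the time translates, that a self-contained proof would have to run on. As written, your spatial equi-integrability step rests on an estimate that is unavailable, so there is a genuine gap.

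Moreover, your closing sentence concedes that the rigorous treatment ``requires the nonlinear semigroup framework of compact $m$-accretive operators in $L^1$, which is the content of the cited result'' --- that is, the proposal ultimately falls back on the very theorem it is meant to prove, so it does not constitute an independent argument. If you want a genuine alternative to the citation, the route is: (i) $L^1$-contraction for the uniform bound (this part of your sketch is fine); (ii) the $L^\beta$ gradient estimate above for $u^m$, giving relative compactness of spatial translates via $W^{1,\beta}(\Omega)\hookrightarrow\hookrightarrow L^1(\Omega)$ under $m>(d-2)_+/d$; (iii) $\partial_t u=\delta\Delta(u^m)+f$ bounded in $L^1(0,T;(W^{1,\beta'}(\Omega))^*)$ to control time translates. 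Otherwise, citing [Bar78] as the paper does is the cleaner course.
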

\begin{proof}[Proof of Theorem \ref{thm:1.1}]
	Thanks to the uniform bounds of the nonlinearities in Lemma \ref{duality} and the compactness Lemma \ref{compact}, there exists a subsequence (not relabeled) $\{u_{i,\varepsilon}\}_{\varepsilon}$ which converges in $L^1(Q_T)$ to limit functions $u_{i}\in L^1(Q_T)$. From the $L^{m_i+1}$-bound in Lemma \ref{duality}, it holds in fact that $u_{i,\varepsilon}$ (up to another subsequence) converges strongly to $u_{i}$ in $L^{m_i}(Q_T)$. For the nonlinearities, we first notice from Lemma \ref{duality} that the sequence $\{f_{i,\varepsilon}(u_{\varepsilon})\}$ is uniformly integrable. Moreover, for another subsequence $u_{i,\varepsilon} \to u_i$ a.e. in $Q_T$ it follows that
	\begin{equation*}
		f_{i,\varepsilon}(u_{\varepsilon}) \to f_i(u_i) \quad \text{ a.e. in } \quad Q_T.
	\end{equation*}
	Therefore, we can apply Vitali's Lemma, see e.g. \cite[Chapter 16]{Sch05}, to obtain $f_{i,\varepsilon}(u_{\varepsilon}) \to f_{i}(u_i)$ strongly in $L^1(Q_T)$. All this allows to pass to the limit in the weak formulation \eqref{weakform} for any test function $\psi \in C^{2,1}(\overline{\Omega}\times [0,T])$ with $\psi(T) = 0$ and $\nabla \psi \cdot \overrightarrow{n} = 0$ on $\partial\Omega\times (0,T)$. Hence, we get
	\begin{equation*}
	-\int_{\Omega}\psi(0)u_{i,0}dx - \int_{Q_T}(\partial_t \psi u_{i} + u_{i}^{m_i}\Delta \psi)dxdt = \int_{Q_T}f_{i}(u)\psi dxdt.
	\end{equation*}
	The additional regularity $u^{m_i}_i \in L^1(0,T;W^{1,1}(\Omega))$ follows immediately from \cite[Lemma 4.7]{Luk10}, where 
	\begin{equation*}
		\int_0^T\int_{\Omega}|\nabla u_i^{m_i}|^{\beta}dxdt \leq C(T, \|u_{i,0}\|_1, \|f_i(u)\|_{L^1(Q_T)}) \quad \text{ for all } 1\leq \beta < 1 + \frac{1}{1+m_id}.
	\end{equation*}
	From the above estimate and $f_i(u)\in L^1(Q_T)$, we also have $\partial_tu_i \in L^1(0,T;(W^{1,1}(\Omega))^*)$ which implies in particular $u_i \in C([0,T];L^1(\Omega))$. This completes the proof of existence of global weak solutions.
\end{proof}
\noindent{\bf Acknowledgements.} 
The second author was supported by the DFG Project CH 955/3-1.
This work is partially supported by International Research Training Group IGDK 1754 and NAWI Graz.

\end{document}